\documentclass[12pt, a4paper]{article} 

\usepackage{amssymb, amsmath, amsfonts, amsthm}
\usepackage[T1, T2A]{fontenc}
\usepackage[utf8]{inputenc}
\usepackage[english]{babel}
\usepackage{fancyhdr}
\usepackage{latexsym}
\usepackage{mathtools, enumerate, rotating, framed, lipsum}
\usepackage{graphicx}
\usepackage[pass]{geometry}
\usepackage{wrapfig}
\usepackage{rotating}
\usepackage{textcomp}
\usepackage{arcs}
\usepackage{gensymb}
\usepackage{mathrsfs}
\usepackage{multicol}
\usepackage{titlesec}
\usepackage{chngcntr}
\usepackage{xcolor}
\usepackage{float}
\usepackage{tikz}
\usepackage{courier}
\usepackage{thmtools}
\usepackage[hidelinks]{hyperref}
\usepackage{yfonts}
\usepackage{enumitem}
\usepackage{parskip}
\usepackage{etoolbox}
\usepackage{setspace}
\usepackage{relsize}
\usepackage{csquotes}

\usepackage[shortcuts]{extdash}

\usetikzlibrary{decorations.markings}

\newcommand{\NN}{\mathbb{N}} \newcommand{\ZZ}{\mathbb{Z}}
 
\newcommand{\8}{\infty}

\newcommand{\abs}[1]{\left| #1 \right|}

\newcommand{\set}[1]{\left\{ #1 \right\}}

\DeclarePairedDelimiter\floor{\lfloor}{\rfloor}

\DeclareMathOperator{\Cay}{Cay}
\DeclareMathOperator{\diam}{diam}

\DeclareMathAlphabet\mathbfcal{OMS}{cmsy}{b}{n}

\makeatletter
\def\th@plain{%
  \thm@notefont{}
  \itshape 
}
\def\th@definition{%
  \thm@notefont{}
  \normalfont 
}
\makeatother

\newtheoremstyle{break}
  {\topsep}{\topsep}%
  {\itshape}{}%
  {\bfseries}{}%
  {\newline}{\thmname{#1}\thmnumber{ #2}\thmnote{ (#3)}}%
\theoremstyle{break}
\newtheorem{thm}{Theorem}[section]
\newtheorem{lm}[thm]{Lemma}

\newtheorem{fact}[thm]{Fact}
\newtheorem{cor}[thm]{Corollary}
\newtheorem{oprob}[thm]{Question}
\newtheorem{rk}[thm]{Remark}

\newtheorem*{thma*}{Theorem A}
\newtheorem*{thmb*}{Theorem B}

\newtheoremstyle{defbreak}
  {\topsep}{\topsep}%
  {\upshape}{}%
  {\bfseries}{}%
  {\newline}{\thmname{#1}\thmnumber{ #2}\thmnote{ (#3)}}%
\theoremstyle{defbreak}
\newtheorem{df}[thm]{Definition}

\renewcommand{\thethm}{%
  \ifnum\value{subsection}=0 
    \thesection
  \else
    \thesubsection
  \fi%
  .\arabic{thm}} 
  
\newcommand{\ol}[1]{\overline{#1}}
\newcommand{\wtl}[1]{\widetilde{#1}}

\renewcommand{\bigstar}{\mathop{\mathlarger{\mathlarger{\mathlarger{*}}}}}

\DeclareMathOperator*{\damalgam}{\widetilde{\bigsqcup}}

\newcommand{\leqnomode}{\tagsleft@true\let\veqno\@@leqno}

\newcounter{todocounter}
\addtocounter{todocounter}{1}

\newcommand{\com}[1]{}

\title{$E\mathcal{Z}$-boundaries, splittings over finite subgroups, \\and dense amalgams}
\author{Mateusz Kandybo and Jacek Świątkowski}
\begin{document}


\maketitle

\com{\section{Dictionary}

$a$\\
$b$\\
$c$ - path\\
$d, d_\square$ - metrics\\
$e$ - element of $E$ (or $\ol{E}$)\\
$f$ -function in $P$-separation lemma \\
$g$ - element of $G_{\square}$\\
$h$\\
$i$ - natural number; $i_y$ - monomorphism from the definition of the graph of groups\\
$j$\\
$k$\\
$l$ - length of a path\\
$m$ - natural number; element of $M$\\
$n$ - natural number\\
$o$\\
$p$\\
$q$\\
$r$\\
$s$ - generator of $\Gamma$; $s_y$ - generator corresponding to an edge $y$ in $\pi_1(\mathcal{G},Y,T)$\\
$t$\\
$u$\\
$v$ - vertex from $Y$\\
$w$\\
$x$\\
$y$ - oriented edge from $Y$; $\ol{y}$ - edge oppositely oriented to $y$; $\abs{y}$ - non-oriented edge corresponding to $y$\\
$z$ - element of $Z$

$A$ - orientation of edges in $Y$\\
$B$\\
$C$\\
$D$\\
$E = \ol{E}\setminus Z$; $\ol{E}$ - set from $E\mathcal{Z}$-structure\\
$F$\\
$G$\\
$H$\\
$I$ - separating set\\
$J$\\
$K$ - compact set/$R$-separating set/$S$-separating set\\
$L$ - compact subset of $E$\\
$M$ - subset of $\Gamma$ (in a definition of a limit set)\\
$N$ - subset of $E$ (in a definition of a limit set)\\
$O_Y$ - set of oriented edges of $Y$; $\abs{O}_Y$ - set of non-oriented edges of $Y$\\
$P$ - $P$-separating set\\
$Q$\\
$R$ - positive real number (``radius'')\\
$S$ - fixed set of generators of $\Gamma$; $S_v$ - fixed set of generators of $G_v$\\
$T$ - spanning tree of graph of groups\\
$U$ - open set\\
$V_Y$ - set of vertices of $Y$\\
$W$ element of $\mathcal{W}$\\
$X, X_i$ - general (topological) space; $\wtl{X}$ - Bass-Serre tree\\
$Y$ - underlying graph of a graph of groups\\
$Z$ - boundary set from $E\mathcal{Z}$-structure

$\mathcal{A}$\\
$\mathcal{B}$\\
$\mathcal{C}$ - Cantor set\\
$\mathcal{D}$\\
$\mathcal{E}$\\
$\mathcal{F}$\\
$\mathcal{G}$ - family of groups from a graph of groups\\
$\mathcal{H}$\\
$\mathcal{I}$\\
$\mathcal{J}$\\
$\mathcal{K}$\\
$\mathcal{L}$\\
$\mathcal{M}$\\
$\mathcal{N}$\\
$\mathcal{O}$\\
$\mathcal{P}$ - powerset\\
$\mathcal{Q}$\\
$\mathcal{R}$\\
$\mathcal{S}$\\
$\mathcal{T}$ - ``tiling'' tree of $\wtl{X}$\\
$\mathcal{U}$ - family of open sets\\
$\mathcal{V}$\\
$\mathcal{W}, \mathcal{W}_i$ - families of copies of $X_i$ (from a definition of a dense amalgam)\\
$\mathcal{X}$\\
$\mathcal{Y}$\\
$\mathcal{Z}$

$\alpha$ - initial point of an edge/path\\
$\beta$\\
$\gamma$ - element of $\Gamma$\\
$\delta$ - element of $\Gamma$\\
$\epsilon$\\
$\varepsilon$\\
$\zeta$\\
$\eta$ - curve\\
$\theta$\\
$\vartheta$\\
$\iota$\\
$\kappa$\\
$\lambda$\\
$\mu$ - sequence of group elements from a word\\
$\nu$\\
$\pi_1$ - fundamental group\\
$\rho$\\
$\varrho$\\
$\sigma$\\
$\varsigma$\\
$\tau$\\
$\upsilon$\\
$\phi$\\
$\varphi$\\
$\xi_i$ - element of $\Xi_i$\\
$\psi$\\
$\omega$ - final point of an edge/path 

$\Gamma$\\
$\Delta$\\
$\Theta$\\
$\Lambda$ - operation of taking the limit set\\
$\Pi$\\
$\Sigma$\\
$\Upsilon$\\
$\Phi$\\
$\Xi_i$ - componnents from the definition of $R/S$-separation\\
$\Psi$\\
$\Omega$

\newpage}

\begin{abstract}
The {\it dense amalgam} is an operation (introduced in \cite{dense-amalgam}) which to any finite collection of metrizable compacta associates canonically some new highly disconnected compact metrisable space in which embedded copies of the initial spaces are appropriately uniformly and disjointly distributed. We show that in the very general framework of $E\mathcal{Z}$-boundaries (unifying many frameworks such as Gromov boundaries,
CAT(0)-boundaries, systolic boundaries, etc.),
any boundary $Z$ of an infinitely ended group $\Gamma$ equipped with an appropriate splitting along finite subgroups has a form of the dense amalgam of the limit sets in $Z$ of the factor subgroups of this splitting.
\end{abstract}

\section{Introduction}

It is a classical and widely studied problem, to relate algebraic features
of discrete groups with some corresponding topological properties of their
boundaries at infinity. In this paper we study such a relationship
for the property of admitting a nontrivial splitting over finite subgroups.

As it was shown in \cite{dense-amalgam}, in many frameworks
(such as Gromov boundaries of hyperbolic groups, CAT(0) boundaries,
systolic boundaries, etc.) groups that split over finite subgroups admit
boundaries that have some particular form of so called \textit{dense amalgam}
(of the boundaries of the factors).
Dense amalgam is a topological operation (introduced in \cite{dense-amalgam}
and recalled in Subsection \ref{Subsection:dense_amalgams} below) which to any finite collection
of compact metrisable spaces associates some new (unique up to homeomorphism)
compact metrisable and highly disconnected space in which infinitely many copies of the initial spaces
appear as appropriately uniformly distributed subspaces (see Definition \ref{Definition:dense_amalgam}).
The goal of this paper is the twofold extension of the aforementioned observation. First, we show that in the contexts as above 
\textit{any} boundary (of the corresponding kind) of a group admitting
a splitting over finite subgroups has a form of the dense amalgam of some
spaces associated naturally to the factors. This is indeed an extension,
since a given group can have many pairwise non-homeomorphic boundaries
of a given kind, and we show that \textit{all} of them have a form of a 
dense amalgam. Second, we justify the just mentioned ``dense amalgam''
phenomenon in a unified framework of 
so called $E\mathcal{Z}$-boundaries of groups (as recalled in Subsection \ref{Subsection:EZ-boundaries}).
The latter framework encompasses and extends all the previously
mentioned frameworks.

The precise statement of our main result requires few more technical
and terminological explanations. First of all, we encode splittings of groups 
$\Gamma$ over their finite subgroups by means of expressions of
these groups as the fundamental groups $\pi_1\mathcal{G}$ of appropriate
graphs of groups $\mathcal{G}$. The vertex groups $G_v$ of $\mathcal{G}$
are then viewed naturally as subgroups of $\Gamma$, and they are the factors
of the considered splittings. See Subsection \ref{Subsection:graphs_of_groups} for more detailed explanations, and in particular see Definition \ref{Definition:non-elementary_gog} for the explanation of the concept
of a non-elementary graph of groups.
Next, we refer to the notion of the limit set of a subgroup for a group
equipped with an $E\mathcal{Z}$-boundary. The details of this concept
are recalled in Subsection \ref{Subsection:EZ-boundaries}. 
We denote by $V_Y$ the vertex set of a graph $Y$.
Our main result then reads as follows.

\bigskip
\begin{thma*}\label{thma}
Let $(\mathcal{G},Y)$ be a non-elementary graph of groups (over a finite graph $Y$) with finite
edge groups, let $\Gamma=\pi_1\mathcal{G}$ be its fundamental group,
and let $\set{G_v:v\in V_Y}$ be the family of the vertex groups of $\mathcal{G}$, which we assume to be finitely generated
(and which we view as subgroups of $\Gamma$). Let $(\ol{E},Z)$ be an
$E\mathcal{Z}$-structure for $\Gamma$. Then the $E\mathcal{Z}$-boundary
$Z$ of $\Gamma$ (corresponding to this structure) is homeomorphic to
the dense amalgam of the family of the limit sets $\set{\Lambda G_v:v\in V_Y}$
in $Z$, i.e. $$Z\cong\damalgam_{v\in V_Y}\Lambda G_v.$$
\end{thma*}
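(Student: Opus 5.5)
The plan is to verify directly that $Z$, with the family of all translates $\gamma\Lambda G_v$ ($\gamma\in\Gamma$, $v\in V_Y$), satisfies the defining conditions of the dense amalgam in Definition \ref{Definition:dense_amalgam}. These conditions are:
\begin{enumerate}[label=(\roman*)]
\item the subspaces are pairwise disjoint, and each is homeomorphic to the appropriate $\Lambda G_v$;
\item the family is null, i.e.\ for every $\epsilon>0$ only finitely many members have diameter $\geq\epsilon$;
\item each family $\mathcal{W}_v=\set{\gamma\Lambda G_v}$ is dense in $Z$;
\item every point of $Z$ outside $\bigcup\mathcal{W}$, and any two distinct members, can be separated by clopen sets avoiding (not cutting) the members of the family.
\end{enumerate}
The uniqueness theorem for dense amalgams from \cite{dense-amalgam} then gives $Z\cong\damalgam_{v\in V_Y}\Lambda G_v$.

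The main tool is the Bass--Serre tree $\wtl{X}$ of $(\mathcal{G},Y)$ together with the geometry of $\Gamma$ acting on $E$. Fix a finite generating set built from the $S_v$ and the stable letters $s_y$, so that the Cayley graph of $\Gamma$ is quasi-tree-like: it is a union of cosets $\gamma G_v$ glued along finite cosets $\gamma G_y$. I will first establish a separation lemma. For each edge $e$ of $\wtl{X}$, removing $e$ splits $\Gamma$ into two pieces $\Gamma_e^{\pm}$, which are separated in the Cayley graph by the finite set $\gamma G_y$. Using the $E\mathcal{Z}$ axioms, I will show that the limit sets $\Lambda\Gamma_e^{+}$ and $\Lambda\Gamma_e^{-}$ are disjoint clopen subsets of $Z$ covering $Z$. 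The argument uses the nullity of translates of a compact fundamental set in $\ol E$, plus the fact that a path from $\Gamma_e^+$ to $\Gamma_e^-$ must pass within bounded distance of a finite set. Finite subgroups have empty limit set, which gives disjointness.

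Using these clopen sets, each point $z\in Z$ determines a decreasing sequence of clopen sets coming from either
\begin{itemize}
\item an end of $\wtl{X}$, in which case $z$ is a non-amalgam point, or
\item a vertex $\gamma v$ of $\wtl{X}$, in which case $z\in\gamma\Lambda G_v$.
\end{itemize}
This immediately yields the following.
\begin{itemize}
\item Pairwise disjointness: distinct vertices are separated by an edge.
\item Condition (iv): clopen sets $\Lambda\Gamma_e^{\pm}$ never cut any $\gamma\Lambda G_v$, since vertex cosets lie entirely on one side of $e$ up to a finite set.
\item Homeomorphism of $\gamma\Lambda G_v$ with $\Lambda G_v$: this is just translation by $\gamma$.
\end{itemize}
Nullity follows from the $E\mathcal{Z}$ nullity condition applied to translates of a compact set, together with quasi-convexity of the cosets $\gamma G_v$. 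The latter holds because the $G_v$ are finitely generated and the edge groups are finite, so each $\gamma G_v$ is a retract-like, undistorted subset.

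Density (iii) uses the non-elementary hypothesis. Every clopen set $\Lambda\Gamma_e^+$ contains infinitely many vertex cosets of every type, because $\wtl{X}$ branches and each vertex type recurs in every half-tree. Since these clopen sets, together with the decreasing sequences above, form a neighbourhood basis at every point, each $\mathcal{W}_v$ is dense.

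The main obstacle I expect is the separation lemma: proving that the limit sets of the two half-trees are exactly complementary clopen sets. In particular one must show that every $z\in Z$ is a limit of group elements lying in one side, and that there is no accumulation from both sides. I would first try the following approach. Choose a compact $K\subset E$ with $\Gamma K=E$. Connect points by chains of overlapping translates of $K$, as is done in $E\mathcal{Z}$ arguments of Bestvina type. Then argue that any chain crossing sides passes through the bounded region near the finite coset $\gamma G_y$, while nullity forces translates far away to be small. Together these should give the required separation.
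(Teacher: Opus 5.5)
Your overall architecture matches the paper's: the family $\set{\Lambda\gamma G_v}$, the saturated clopen sets $\Lambda\Gamma_e^{\pm}$ cut out by edges of $\wtl{X}$ (Lemma \ref{lemma:H_W-saturated}), separation in $E$ via chains of translates (the $R$-separation Lemma \ref{corollary:R-separation_lemma}), and the end/vertex dichotomy for points of $Z$.

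\textbf{Density fails at vertex points.} The sets $\Lambda\Gamma_e^{\pm}$ are $\mathcal{W}$-saturated. So each of them, and each finite intersection of them, that contains a point $z\in\gamma\Lambda G_v$ contains all of $\gamma\Lambda G_v$. They therefore do not form a neighbourhood basis at $z$ unless $\Lambda G_v$ is a singleton. Knowing that each such set meets $\bigcup\mathcal{W}_{v'}$ only shows, by compactness, that $\bigcup\mathcal{W}_{v'}$ accumulates \emph{somewhere} on $\gamma\Lambda G_v$, not at each of its points.

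The missing statement is that every point of $\gamma\Lambda G_v$ is a limit of points outside $\gamma\Lambda G_v$. This is also exactly condition (a3) of Definition \ref{Definition:dense_amalgam}, which you dropped from your list. When there is only one family (e.g.\ $\Gamma=G*\ZZ$ with $G$ infinite), it is not a formal consequence of the other conditions. This is precisely where non-elementarity is needed: for a single vertex with $G_v=\Gamma$ one has $Z=\Lambda G_v$, and (a3) fails. At end points, by contrast, density needs no hypothesis, since the half-trees along a ray contain whole translates of the finite subtree spanned by the edges $G_y$, hence cosets of every type.

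The paper closes this gap in Lemma \ref{lemma:branch_points_dense}. Non-elementarity gives an edge $y$ at $v$ all of whose lifts cut $\wtl{X}$ into two infinite halves. For $z=\lim\gamma g_ne_0$ with $g_n\in G_v$, take the pairwise distinct edges $\gamma g_n'G_y$ at the vertex $\gamma G_v$, with $d_S(g_n,g_n')\le 1$. Their hanging halves contain branch points $z^{(n)}$, which are mutually separated by $\gamma g_n'K$. Since $\gamma g_n'K$ shrinks to $z$, Lemma \ref{lemma:separated_convergence_2} gives $z^{(n)}\to z$. That lemma says: if $\gamma_n\to z$ and $\gamma_{n_k}K$ separates $e_k$ from almost all $e_m$, then $e_m\to z$; its proof uses local connectedness of $\ol{E}$ and nullity of translates. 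Density of branch points then yields both (a3) and (a4).

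\textbf{Other places where this convergence lemma is missing.} Nullity does not come from quasi-convexity. For $\ZZ^2$ acting on $\mathbb{R}^2$ with its visual compactification, the uniformly quasi-convex cosets $\ZZ\times\set{n}$ all have the same two-point limit set. The paper instead uses K\H{o}nig's lemma to pick pairwise distinct separating edges $\gamma_n''G_{y}$ of a single type. Then $\gamma_n''K$ separates $z_n$ (resp.\ $z_n'$) from all later $z_k$ (resp.\ $z_k'$), and the convergence lemma forces $z_n$ and $z_n'$ to a common limit.

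Your classification of points is also not ``immediate''. At a vertex of infinite degree, $z$ may be a limit of elements from infinitely many distinct hanging subtrees, and the convergence lemma is needed to place $z$ in $\gamma\Lambda G_v$ (Lemma \ref{lemma:classification_in_Z}). Along an end, you must show that the nested clopen sets shrink to a single point; otherwise your clopen sets cannot separate two points determining the same end, as (a5) requires.

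In your separation lemma, emptiness of limit sets of finite groups does not give disjointness. For example, the complementary half-planes $y\ge 0$ and $y<0$ of $\ZZ^2$ have intersecting limit sets. The bridge from separation in $E$ by $\gamma''K$ to disjointness in $Z$ is the $Z$-set fact that each $z\in Z$ lies in the closure of exactly one component of $E\setminus K$ (Lemma \ref{one-component}).

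Finally, vertices with finite $G_v$ contribute empty sets and must be discarded via Remark \ref{Remark:definition_of_dense_amalgam}. The case where all $G_v$ are finite, so that $\mathcal{W}=\emptyset$, needs a separate proof that $Z$ is a Cantor set, again using non-elementarity.
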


By referring to Stallings' theorem about ends of groups
and to various further basic properties of splittings of groups over finite subgroups,
Theorem A combined with few other observations from
this paper yields the following corollary (see the last part of Section \ref{Section:main_result} 
for a detailed justification).

\medskip
\begin{thmb*}
Let $\Gamma$ be a finitely presented group, and suppose that
$\Gamma$ admits an EZ-boundary $Z$. Then:
\begin{enumerate}
\item
$\Gamma$ is 1-ended iff $Z$ is connected;
\item
$\Gamma$ is 2-ended iff $Z$ is a doubleton;
\item
$\Gamma$ is infinitely ended iff $Z$ is the dense amalgam of a family of connected spaces. 

\end{enumerate}

\end{thmb*}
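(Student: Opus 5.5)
We derive Theorem B from Theorem A together with Stallings' theorem and standard facts about $E\mathcal{Z}$-structures. The key background facts are these. For a finitely generated group $\Gamma$ with $E\mathcal{Z}$-structure $(\ol{E},Z)$, the space $E$ is proper and cocompact, so it is quasi-isometric to $\Gamma$. The $\mathcal{Z}$-set condition then gives a bijection between the ends of $E$ (hence of $\Gamma$) and the connected components of $Z$ when the number of ends is finite. More precisely, $Z$ is connected iff $E$ has one end; here one uses that $\ol{E}$ is an AR, so it is connected and locally connected. In general, the number of ends of $\Gamma$ is at most the number of components of $Z$, and this correspondence is via quasi-components. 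We also use that $Z$ is nonempty when $\Gamma$ is infinite, and empty when $\Gamma$ is finite.

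Since $\Gamma$ is finitely generated, it has $0$, $1$, $2$ or infinitely many ends. The proof then proceeds by cases.

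\begin{enumerate}
\item \emph{One-ended case.} If $\Gamma$ is 1-ended, $E$ has one end. Given a separation $Z=A\sqcup B$ into nonempty clopen sets, extend it to disjoint neighbourhoods in $\ol{E}$. Using the $\mathcal{Z}$-set property (homotopies pushing $\ol{E}$ off $Z$) together with local connectedness of $\ol{E}$, we produce two unbounded components of the complement of a compact set in $E$, a contradiction. Hence $Z$ is connected.
\item \emph{Two-ended case.} If $\Gamma$ is 2-ended, it contains an infinite cyclic subgroup of finite index. The limit set of $\langle g\rangle$ is the pair $\{g^{\pm\infty}\}$, obtained via the nullity condition on translates of compacta. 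Since the finite-index subgroup has the same limit set as $\Gamma$, and $\Lambda\Gamma=Z$, we get that $Z$ is a doubleton.
\item \emph{Infinitely ended case.} If $\Gamma$ is infinitely ended, Stallings' theorem applies. Since $\Gamma$ is finitely presented, Dunwoody accessibility gives a finite graph of groups with finite edge groups whose vertex groups are finite or one-ended. These vertex groups are finitely generated, indeed finitely presented. We discard the trivial cases and arrange the graph of groups to be non-elementary, in the sense of Definition \ref{Definition:non-elementary_gog}; infinitely many ends ensures this is possible. Theorem A then gives $Z\cong\damalgam_v \Lambda G_v$.
\end{enumerate}

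The main obstacle is in the infinitely ended case: we must show that each $\Lambda G_v$ is connected, or empty for finite $G_v$, and handle empty factors in the dense amalgam. For an infinite one-ended $G_v$, the idea is that $G_v$ acts on $\ol{E}$, and the orbit closure of a $G_v$-cocompact quasi-convex-like subset has boundary $\Lambda G_v$. We then run the argument of the one-ended case for this subset rather than for $E$. Alternatively, one could cite the paper's observation, presumably in Section \ref{Section:main_result}, that $\Lambda G_v$ is itself an $E\mathcal{Z}$-type boundary of $G_v$ for the induced structure. Finite vertex groups have empty limit set, and we invoke the convention that the dense amalgam ignores empty factors, or that these vertices can be absorbed.

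The converses follow by exclusion. The three topological conditions are mutually exclusive, and each of the three end-cases forces one of them. A dense amalgam is disconnected and infinite, so it is neither connected nor a doubleton. A finite group has empty $Z$, which satisfies none of the conditions. Hence each equivalence holds.
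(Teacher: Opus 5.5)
Your skeleton matches the paper's: a split by number of ends, the Dunwoody decomposition plus Theorem A, connectedness of the factors' limit sets, and converses by exclusion. But the step you flag as the main obstacle is left open.

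Your fallback citation does not exist. The paper never shows that $\Lambda G_v$ is an $E\mathcal{Z}$-boundary of $G_v$; that is exactly Question \ref{Open_problem_1}, posed as open. Your primary idea is not available as stated either. There is no quasi-convexity in this generality. The one-ended argument you sketched uses the $\mathcal{Z}$-set property of $Z\subseteq\ol E$ and one-endedness of the ambient space, and neither is known for the closure of a $G_v$-invariant subset of $E$.

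The missing ingredient is Lemma \ref{limit_set_connected}: for any finitely generated one-ended $H<\Gamma$, $\Lambda H$ is $\epsilon$-connected for every $\epsilon>0$. The argument has three pieces:
\begin{enumerate}
\item By nullity, $d(\gamma e_0,\gamma s e_0)<\epsilon/3$ whenever both points avoid a suitable compact set.
\item Orbit points $he_0$ with $h$ outside a finite subset of $H$ are $\epsilon/3$-close to $\Lambda H$.
\item One-endedness lets you join any two far-out elements of $H$ by a path in $\Cay(H,S)$ avoiding a given finite set.
\end{enumerate}
Applied with $H=\Gamma$, the same lemma gives part 1 directly. That matters because your route to part 1 transfers one-endedness from $\Gamma$ to $E$ via a quasi-isometry. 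In this framework $E$ carries no geodesic metric, and $\ol E$ is not assumed to be an AR (only compact metrizable, connected and locally connected). So that transfer would itself need a proof.

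The two-ended case has a similar gap. Nullity only shows that the accumulation sets of $(g^ne_0)_{n\geqslant 0}$ and $(g^ne_0)_{n\leqslant 0}$ are continua, since consecutive terms get close. It shows neither that these sets are points nor that they are distinct. For that you need separation: a compact $K\subseteq E$ whose translates separate the forward orbit from the backward one, fed into an argument like Lemma \ref{lemma:separated_convergence_2}. The paper gets this by writing $\Gamma$ as the fundamental group of a finite graph of finite groups whose Bass--Serre tree is a line. It then applies Lemma \ref{lemma:virtually_free_case}, which rests on Lemma \ref{lemma:separating_set_in_EZ}.

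There are two smaller omissions in part 3.
\begin{enumerate}
\item Non-elementarity of the Dunwoody decomposition needs its short argument. A single vertex would carry all of $\Gamma$, which is neither finite nor one-ended. A single edge gives a line as Bass--Serre tree with finite vertex groups, forcing two ends.
\item When all vertex groups are finite, you must observe $Z\cong\mathcal{C}\cong\damalgam(\mathrm{pt})$. Under your convention that $\emptyset$ is not connected, $\damalgam(\emptyset,\ldots,\emptyset)$ is not yet a dense amalgam of connected spaces.
\end{enumerate}
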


The paper is organized as follows.
We present an outline of the proof of Theorem A in Subsection \ref{Subsection:outline_of_proof}
(after completing all preliminary informations in Subsections \ref{Subsection:graphs_of_groups}-\ref{Subsection:dense_amalgams}).
The actual proof of Theorem A occupies Section \ref{Section:main_result}. Sections \ref{Section:separation} and \ref{Section:classification_of_points} 
contain some preparations for this proof. In particular, in Section \ref{Section:separation} we 
introduce tools related to various phenomena of separation in spaces
and groups. Section \ref{Section:classification_of_points} contains some auxiliary analysis and classification of the 
$E\mathcal{Z}$-boundary points
for groups $\Gamma=\pi_1\mathcal{G}$ as in the statement of Theorem A. Lastly, in Section \ref{Section:final_conclusions} we state some final remarks and ask $3$ questions regarding possible expansions of our results from Theorems A and B.

\textbf{Acknowledgements}\\
This paper was created to expand the work from the first author's Bachelor Thesis (written under the supervision of the second author). We would like to thank Jan Dymara (reviewer of the aforementioned thesis) for helpful comments that effectively pushed us to start this project. Moreover we would like to thank Damian Osajda for a discussion about $E\mathcal{Z}$-structures which allowed us to expand and generalize our initial result. The work presented here was partially supported by the Carlsberg Foundation, grant CF23-1226.

\section{Preliminaries} \label{Section:Preliminaries}


\subsection{Graphs of groups}\label{Subsection:graphs_of_groups}

In this subsection we recall some definitions and basic facts from Bass-Serre theory. The reader is encouraged to consult the book \cite{serre-trees} for more complete exposition, or Section $4$ of \cite{dense-amalgam}, where this material is treated in the context related to the  aims of this paper.

For any graph $Y$ (we allow graphs to have loops and multiple edges) we denote the set of vertices of $Y$ as $V_Y$, the set of all (unoriented) edges of $Y$ as $\abs{O}_Y$. To each edge $\abs{y}\in \abs{O}_Y$ we associate two oppositely oriented edges underlied by $\abs{y}$. Moreover we denote the set of all oriented edges of $Y$ as $O_Y$. For any oriented edge $y\in O_Y$ we denote its initial vertex as $\alpha(y)$, its final vertex as $\omega(y)$, the oppositely oriented edge as $\ol{y}$ and the corresponding non-oriented edge as $\abs{y}$.\medskip

\begin{df}[Paths and branches]
    An \textit{immersed path} in a graph $Y$ (or shortly a \textit{path}) is a (finite or infinite) sequence $c=(c_1,c_2,\dots)$ of oriented edges such that each pair of consecutive edges satisfies $\omega(c_i) = \alpha(c_{i+1})$ and $c_{i+1}\ne\ol{c_i}$. The \textit{length} $l(c)$ of a path $c$ is the number of edges in this path. The \textit{initial} vertex of a path, denoted $\alpha(c)$ is the initial vertex of its first edge, i.e. $\alpha(c)\coloneqq\alpha(c_1)$. In case of finite paths, the \textit{final} vertex of a path $\omega(c)$ is the final vertex of its last edge, i.e. $\omega(c)\coloneqq\omega(c_{l(c)})$. When $Y$ is a tree, any infinite path in $Y$ is called a \textit{branch}.
\end{df}


\begin{df}[Graph of groups]
    A \textit{graph of groups} $(\mathcal{G},Y)$ is a graph $Y$ together with a family of groups $\mathcal{G}$ indexed by all vertices of $Y$ and all oriented edges of $Y$ together with a set of group monomorphisms described below. To differentiate between the family $\mathcal{G}$ and groups being its elements we use a standard notation where $\mathcal{G}$ denotes the whole family while $G_v, G_y$ are elements of that family indexed respectively by $v, y$ (note the lack of the calligraphic font for groups). The group monomorphisms in the family $\mathcal{G}$ are $i_y:G_y \to G_{\omega(y)}$ and they are defined for every $y\in O_Y$; it is also assumed that $G_y = G_{\ol{y}}$. 

    In this paper we will always assume that the underlying graph $Y$ of a graph of groups $\mathcal G$ is finite, and that the vertex groups $G_v$ are finitely generated, even though the next definitions in this subsection do not require these assumptions.
\end{df}

The following definition can be found in Chapter 5.1 of \cite{serre-trees}.\medskip

\begin{df}[Fundamental group of a graph of groups]\label{def:fundamental_group_of_gog}
    Fix a spanning tree $T$ of $Y$. Define the \textit{fundamental group} $\pi_1(\mathcal{G},Y,T)$ as the quotient of the free product $\displaystyle{F_{O_Y\setminus O_T}*\bigstar_{v\in V_Y}G_v}$ by the normal subgroup generated by all relations of one of the following form:
    $$i_y(g)^{-1}i_{\ol{y}}(g) = 1\text{ where }y\in O_T\text{ and }g\in G_y$$
    as well as, for $s_y$ denoting the generator of $F_{O_Y\setminus O_T}$ corresponding to an edge $y$,
    $$s_ys_{\ol{y}} = 1\text{ where }y\in O_Y\setminus O_T,$$
    $$i_y(g)^{-1}s_{y}^{-1}i_{\ol{y}}(g)s_{y} = 1\text{ where }y\in O_Y\setminus O_T,g\in G_y.$$
    It was proven in Chapter 5.2 of \cite{serre-trees} that each of the vertex groups $G_v$  
    naturally embeds into $\pi_1(\mathcal{G}, Y, T)$ as a subgroup. 
    Using these facts, we will abuse notation treating all $G_v$ as subgroups of $\pi_1(\mathcal{G},Y,T)$. Furthermore, it was shown in Chapter 5.1 of \cite{serre-trees} that any choice of a spanning tree $T$ would yield an isomorphic $\pi_1(\mathcal{G},Y,T)$, therefore we refer to it as the fundamental group of $\mathcal{G}$.
\end{df}

The fundamental groups of graphs of groups provide a convenient way of describing groups obtained as iterated free products with amalgamation and HNN-extensions. The main advantage of using the graph of groups expression comes from existence of the so-called Bass-Serre tree -- a structure on which the corresponding fundamental group acts in a manageable way. In our case, Bass-Serre trees will be useful for constructing appropriate separating sets in spaces and graphs appearing in later parts of the paper. We present the definition of a Bass-Serre tree below.  More details and  context concerning this definition can be found in Chapter 5.3 of \cite{serre-trees}.\medskip

\begin{df}[Bass-Serre tree]
     Let $(\mathcal{G},Y)$ be a graph of groups together with a fixed spanning tree $T$ and a fixed orientation of edges in $Y$ (i.e. a set $A$ such that for each pair of edges $y,\ol{y}$ exactly one of those edges belongs to $A$). We will denote the unique edge not in $A$ from any pair $\set{y, \ol{y}}$ as $\hat{y}$. Then, for each edge $y$ we have $G_y = G_{\hat{y}} \hookrightarrow G_{\omega(\hat{y})}<\Gamma \coloneqq \pi_1(\mathcal{G},Y,T)$. With that we can treat all the edge groups of $(\mathcal{G},Y)$ as subgroups of $\Gamma$. A \textit{Bass-Serre tree} of a graph of groups $(\mathcal{G},Y)$ is a graph $\wtl{X} = \wtl{X}(\mathcal{G},Y,T)$ whose sets of vertices and edges are respectively  $V_{\wtl{X}} = \bigsqcup_{v\in V_Y}\Gamma/G_v$ and $O_{\wtl{X}} = \bigsqcup_{y\in O_Y}\Gamma/G_{y}$. 
     It remains to define $\omega(\gamma G_y)$ and $\ol{\gamma G_y}$ for each edge $\gamma G_y\in O_{\wtl{X}}$, which is done as follows:
     \begin{itemize}
         \item $\ol{\gamma G_{y}} = \gamma G_{\ol{y}}$,
         \item $\omega(\gamma G_{y}) = \begin{cases}
             \gamma s_{y}G_{\omega(y)}\text{, when }y\in A\setminus O_T\\
             \gamma G_{\omega(y)}\text{ otherwise}
         \end{cases}$.
     \end{itemize}
     It is shown in a Chapter 5.3 of \cite{serre-trees} that a graph constructed in such a way is a tree. Moreover $\wtl{X}$ is equipped with a natural action by $\Gamma$:
     $$\gamma'\cdot (\gamma G_v) = \gamma'\gamma G_v \text{ and }\gamma'\cdot \gamma G_{y} = \gamma'\gamma G_{y}.$$
\end{df}

\begin{rk}\label{remark:containment_of_edge}
    Note that, with the definition as above, the coset $\gamma G_y$ representing any edge in a Bass-Serre tree $\wtl{X}$ is contained in the union of cosets $\alpha(\gamma G_y) \cup \omega(\gamma G_y)$ representing the endpoints of this edge. Indeed
    $$\gamma G_y = \gamma G_{\hat{y}} \subseteq \gamma G_{\omega(\hat{y})} = \omega(\gamma G_{\hat{y}}) \subseteq \alpha(\gamma G_{\hat{y}})\cup \omega(\gamma G_{\hat{y}}) = \alpha(\gamma G_{y})\cup \omega(\gamma G_{y}).$$
\end{rk}

The next two definitions aim to provide an appropriate context for the technical concept of a {\it non-elementary graph of groups}, that appears in the statement of our first main result (Theorem A of the introduction). The primary source for this concept is Subsection 4.1 of the paper \cite{dense-amalgam}.\medskip

\begin{df}[Elementary collapse]
    Let $(\mathcal{G},Y)$ be a graph of groups and let $y\in O_Y$ be an edge that is not a loop such that the map $i_y$ is an isomorphism. An \textit{elementary collapse} along the edge $y$ is an operation returning a new graph of groups $(\mathcal{G}',Y')$, where $Y'$ is a graph obtained from $Y$ by contracting the edge $y$ to a point (denoted $v_y$) and by setting $G_{v_y} = G_{\alpha(y)}$ and not changing all of the other groups and maps.
\end{df}

Note that elementary collapses do not change the fundamental group of a graph of groups.\medskip

\begin{df}[Non-elementary graph of groups]\label{Definition:non-elementary_gog}
    We say that a graph of groups $(\mathcal{G},Y)$ is \textit{simply elementary} if it satisfies one of the three conditions:
    \begin{itemize}
        \item $Y$ consists of a single vertex without any edges;
        \item $Y$ consists of two vertices connected by a unique non-oriented edge $\abs{y}$ and the images of both maps $i_y, i_{\ol{y}}$ are the subgroups of index 2
        in the corresponding vertex groups;
        \item $Y$ consists of a single vertex and a single non-oriented loop $\abs{y}$ and both of the maps $i_y, i_{\ol{y}}$ are isomorphisms.
    \end{itemize}
    A graph of groups is \textit{non-elementary} if it can't be reduced to a simply elementary graph via elementary collapses.
\end{df}

Some consequences of being a non-elementary graph of groups are provided in Lemma 4.1.6 in \cite{dense-amalgam}. 
Since we will need them in our arguments
 for proving Theorem A, we restate this lemma below
 for reader's convenience.\medskip

\begin{lm}\label{lemma:behaviour_of_nonelementary_gog}
    Let $(\mathcal{G},Y)$ be a non-elementary graph of groups with all of the edge groups being finite. Let $\Gamma\coloneqq\pi_1(\mathcal G, Y, T)$, and let $\wtl{X} \coloneqq\wtl{X}(\mathcal{G}, Y, T)$ be the Bass-Serre tree.
    \begin{itemize}
        \item[(i)] For every vertex $v\in V_Y$ with infinite $G_v$, there exists an edge $y\in O_Y$ adjacent to $v$ and such that any edge of form $\abs{\gamma G_y}$ splits $\wtl{X}$ into two subtrees, each of which contains at least one vertex of form $\gamma' G_{v'}$ for any vertex $v'\in V_Y$ and any $\gamma'\in\Gamma$;
        \item[(ii)] If all of the vertex groups of $\mathcal{G}$ are finite, then 
        $\wtl{X}$ is an infinite locally finite tree and
        there exists $v\in V_Y$ such that for any $\gamma\in\Gamma$ the vertex
        $\gamma G_v$ splits $\wtl{X}$ into at least $3$ infinite components.
    \end{itemize}
\end{lm}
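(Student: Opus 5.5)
Since this lemma is Lemma 4.1.6 of \cite{dense-amalgam}, the shortest route is to cite it. A self-contained argument would go as follows. First reduce to a \emph{reduced} graph of groups: repeatedly perform elementary collapses along non-loop edges $y$ with $i_y$ an isomorphism. Collapses preserve $\Gamma$ and do not change the Bass-Serre tree up to equivariant contraction of edges, so they do not affect either conclusion; vertex types $v'$ of the original graph then sit inside vertex stabilisers of the collapsed tree. Also, by non-elementarity the reduced graph is not simply elementary. The basic tool is the description of the two half-trees $\wtl{X}\setminus \abs{\gamma G_y}$. The half-tree on the $\omega$-side of the edge $G_y$ contains the vertex $G_{\omega(y)}$ together with all translates $g\cdot(\text{half-trees at the other edges at } G_{\omega(y)})$ for $g\in G_{\omega(y)}$. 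It also contains translates $g G_{\bar y}$ with $g\notin i_y(G_y)$.

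For (i), take $v$ with $G_v$ infinite and any edge $y$ with $\omega(y)=v$; such an edge exists unless $Y$ is a single vertex, which is the first simply elementary case. Since $G_y$ is finite and $G_v$ is infinite, the index $[G_v:i_y(G_y)]$ is infinite. Consider the half-tree containing $G_v$. Its complement edge has infinitely many $G_v$-translates at $G_v$, all distinct from $G_y$. Through one of them, $gG_y$ with $g\in G_v\setminus i_y(G_y)$, we reach the far side $g\cdot(\text{other half-tree})$. Hence the half-tree containing $G_v$ contains a translate of the whole opposite half-tree, and of course it contains $G_v$ itself. The opposite half-tree contains the vertex $\alpha(G_y)$, and through it one reaches every vertex type, because $Y$ is connected and the tree surjects onto $Y$. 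The only potential issue is whether paths leaving that far vertex must pass back through the edge. I would handle this by showing that the far half-tree contains a vertex of every type, using the copy of the $G_v$-side half-tree that it contains. The argument is symmetric: if $\alpha(y)$ has infinite group, use that vertex; if it has finite group, I expect to need some non-elementarity (excluding the index-2 and loop cases) to find another edge at $\alpha(y)$ or a nontrivial coset. This case analysis, and choosing $y$ correctly when the neighbour is finite (e.g.\ taking $y$ to be a non-loop edge such that both sides see $v$-type vertices), is the main obstacle. I would try first to pick $y$ so that $\wtl{X}$ restricted to a path from $G_v$ to $\gamma G_v$ crosses $\abs{G_y}$, and then apply transitivity.

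For (ii), all vertex groups are finite, so vertex degrees $\sum_{y}[G_{\omega(y)}:i_y G_y]$ are finite, which gives local finiteness. Infiniteness follows because non-elementarity forces $\Gamma$ to be infinite: the graph is not a single vertex and not a tree of one edge with trivial amalgam. Next, pick a vertex $v$ of degree at least $3$ in the tree. If every vertex had degree at most $2$, the tree would be a line and $\Gamma$ would be virtually $\ZZ$; after reduction this forces exactly the index-2 edge or the isomorphism loop, i.e.\ a simply elementary graph. Finally, each branch at $\gamma G_v$ is infinite. A finite branch would contain a leaf, i.e.\ a vertex of degree $1$, meaning a single edge type $y$ with $i_y$ surjective. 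That edge is collapsible, which contradicts reducedness unless it is a loop, and a loop gives degree at least $2$.
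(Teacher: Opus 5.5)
Citing Lemma 4.1.6 of \cite{dense-amalgam} is exactly what the paper does: it only restates that lemma and gives no proof of its own. Your self-contained sketch, however, does not prove (i), and the gap is where you flag it. You correctly show that the half-tree on the $v$-side of a lift of $y$ contains a translate of the opposite half-tree, because there are infinitely many lifts of $y$ at a lift of $v$. What you never show is that the half-tree on the $\alpha(y)$-side contains a lift of $v$. Your justification (``through it one reaches every vertex type, because $Y$ is connected'') does not use reducedness, and without reducedness it is false. For example, let $v$ carry a loop and also a pendant edge to a vertex $w$ whose group equals the image of the edge group. This graph is non-elementary, yet every lift of $w$ is a leaf of $\wtl{X}$, so the $w$-side of every lift of that edge is a single vertex.

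In your reduced graph the missing step takes one line, with no case analysis on $G_{\alpha(y)}$ and no appeal to the index-$2$ or loop cases. If $\alpha(y)\neq v$, then $[G_{\alpha(y)}:i_{\ol{y}}(G_y)]\geqslant 2$, since otherwise $\ol{y}$ would be a collapsible non-loop edge. So the vertex $\alpha(\gamma G_y)$ carries a second lift of $y$, and its far side is a translate of the $v$-side half-tree. If $\alpha(y)=v$, the same holds by infinite index. Hence each half-tree contains a translate of the other. Since their union contains every vertex type, each half-tree contains every type, so in a reduced graph every edge at $v$ works.

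Second, the reduction does not leave the conclusion of (i) unchanged, as you claim. The lemma asks for an edge adjacent to $v$ in the \emph{original} $Y$. The good edge you find at the collapsed vertex of $Y'$ may have been adjacent only to an absorbed finite neighbour of $v$. For instance, join $v$ by an edge to $w$ whose group equals the image of the edge group, and put a loop at $w$. Then $Y'$ is $v$ with that loop, and the loop is not adjacent to $v$ in $Y$.

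You need an extra step here. For example, let $y$ be the first edge of the path, inside the collapsed subtree of $Y$, from $v$ to the endpoint of a good edge $y'$ of $Y'$. Lifting this reduced path shows that the far side of a lift of $y$ contains a whole half-tree of a lift of $y'$, hence all vertex types. The $v$-side then contains a translate of it as before.

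A similar, easier transfer is missing in (ii). A vertex of valence $\geqslant 3$ in the collapsed tree corresponds to a finite subtree of $\wtl{X}$, and inside it you still have to locate an original vertex type with three infinite complementary components. Otherwise your reduced-graph argument for (ii) is fine: there are no leaves, since a leaf yields a collapsible edge, and valence $2$ everywhere forces one of the two one-edge simply elementary cases. Infiniteness of the tree also follows from the absence of leaves, which is cleaner than your remark that $\Gamma$ is infinite.
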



\subsection{\texorpdfstring{$\mathbf{E}\mathbfcal{Z}$}{EZ}-boundaries
and limit sets}\label{Subsection:EZ-boundaries}

We present the definition of an 
$E\mathcal{Z}$-structure and 
an $E\mathcal{Z}$-boundary, in a version appropriate for the purposes of
this paper (and slightly more general than the versions appearing in the literature,
see e.g. \cite{Farell-Lafont, guilbault}).\medskip

\begin{df}\label{def:EZ-structures} Let $\Gamma$ be a discrete group.
The pair of spaces $(\overline E,Z)$ such that $Z\subseteq \overline E$
is called an $E\mathcal{Z}$-\textit{structure} 
for $\Gamma$ if it satisfies the following conditions:

\item{(1)} $\overline E$ is a compact metrizable space which is connected and locally connected;

\item{(2)} $Z$ is a Z-set of $\overline E$, i.e. it is closed in $\overline E$ 
and there is a deformation
$h_t:\overline E\to\overline E$ with $h_0=\hbox{id}$ and 
$h_t(\overline E)\cap Z=\emptyset$ for $t>0$;

\item{(3)} $E\coloneqq\overline E\setminus Z$ is equipped with a proper and cocompact action by $\Gamma$;

\item{(4)} for any compact $K\subseteq E$ the family of $\Gamma$-translates
of $K$ (under the above action) is {\it null} in $\overline E$ (i.e. for any metric on $\ol E$ compatible with its topology the diameters of the $\Gamma$-translates
of $K$ converge to 0);

\item{(5)} the above action of $\Gamma$ on $E$ extends continuously
to an action on $\overline E$.

The set $Z$ above is called an \textit{$E\mathcal{Z}$-boundary} of $\Gamma$.
\end{df}

\medskip
\begin{rk}\label{rk:Z-set}
Note that, under the assumptions as above, the group $\Gamma$ is automatically finitely generated, and it is finitely presented e.g. when $E$ is additionally a locally finite simply connected CW-complex. The space $E$ above is automatically locally compact, connected, and locally connected.

Note also that if $Z\subseteq\overline E$ is a Z-set, then for any compact subset
$K\subseteq E=\overline E\setminus Z$:

\item{(1)} there is a deformation $h_t:\overline E\to\overline E$ with
$h_0=\hbox{id}$, $h_t|_K=\hbox{id}_K$ for $t\ge0$, and with
$h_t(\overline E)\cap Z=\emptyset$ for $t>0$;

\item{(2)} there is a deformation 
$h_t:\overline E\setminus K\to\overline E\setminus K$ with $h_0=\hbox{id}$
and with $h_t(\overline E\setminus K)\cap Z=\emptyset$ for $t>0$;

\item{(3)} the inclusion $E\setminus K\to\overline E\setminus K$
is a homotopy equivalence.
\end{rk}


The concept of a {\it limit set}, to which we now turn, is well-known in group theory and used e.g. to study Kleinian groups. It is hard to pinpoint the original source of the definition of a limit sets. The definition we present below is based on Definition 2.9 of the paper \cite{martin}, however we state it in slightly more general fashion.\medskip

\begin{df}[Limit set]
    Let $(\ol{E},Z)$ be an $E\mathcal{Z}$-structure for $\Gamma$. The \textit{limit set} of a subset $N\subseteq E$ is the intersection $\ol{N}\cap Z$, where $\ol{N}$ is the closure of $N$ in $\ol{E}$. Such limit set will be denoted as $\Lambda N$. The \textit{limit set of a subset }$M\subseteq\Gamma$ is defined as the limit set $\Lambda Me_0$ of the set of translates $Me_0\coloneqq\{ \gamma e_0:\gamma\in M \}$ 
    of a fixed base point $e_0\in E$. Note that even though this definition involves a base point $e_0$, the limit set itself does not depend on the choice of $e_0$. Indeed: for any two choices of points $e_0, e_0'$ the diameters of the translates of the doubleton $\set{e_0,e_0'}$ converge to $0$, thus for any sequence $m_ne_0$ convergent to a point in $Z$ the corresponding sequence $m_ne_0'$ converges to the same point. Furthermore one can observe that, for $M$ being a subgroup, this definition agrees with the definition from \cite{martin}. We will abuse the notation and denote the limit set of $M\subseteq \Gamma$ as $\Lambda M$.
\end{df}

The following useful observation seems to belong to folklore,
and we include its proof for completeness.
We refer the reader e.g. to Chapter 9.1 in \cite{drutu-kapovich}
for an introduction concerning the concept of ends (and 1-endedness) of a group.\medskip

\begin{lm}\label{limit_set_connected}
Let $\Gamma$ be a group, and let $(\ol E,Z)$ be an  $E\mathcal{Z}$-structure for $\Gamma$. Suppose that $H<\Gamma$ is a finitely generated
subgroup of $\Gamma$ which is 1-ended.
Then the limit set $\Lambda H\subseteq Z$ is connected.
In particular, if $\Gamma$ is 1-ended then $Z=\Lambda\Gamma$
is connected. 
\end{lm}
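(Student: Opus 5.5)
We argue by contradiction, using the nullity condition (4) of Definition \ref{def:EZ-structures} together with the definition of ends. Fix a finite generating set $S$ of $H$ and a base point $e_0\in E$. Let $N\subseteq E$ be the union of the orbit $He_0$ with, for every $h\in H$ and $s\in S$, a path in $E$ from $he_0$ to $hse_0$. Concretely, fix for each $s\in S$ a path $\eta_s$ from $e_0$ to $se_0$ (possible since $E$ is connected and locally connected, hence path connected), and let $N=\bigcup_{h\in H,s\in S}h\eta_s$. Then $N$ is a connected $H$-invariant set, quasi-isometric to the Cayley graph $\Cay(H,S)$. The translates $h\eta_s$ are translates of a fixed compact set, so they form a null family. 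Consequently $\Lambda N=\Lambda H$: points of $\ol N\setminus N$ are limits of points on paths $h_n\eta_{s}$ with $h_n\to\infty$, and these paths shrink to a point, so the limit coincides with $\lim h_ne_0$. Moreover $\ol{N}=N\cup\Lambda H$.

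Suppose now that $\Lambda H=A\sqcup B$ with $A,B$ nonempty, disjoint and closed in $Z$ (hence compact). Choose disjoint open neighbourhoods $U_A\supseteq A$ and $U_B\supseteq B$ in $\ol E$ at positive distance from each other. We use nullity in the following form: only finitely many pieces $h\eta_s$ have diameter at least $d(U_A,U_B)$. Moreover, all but finitely many $h$ satisfy $h\eta_s\subseteq U_A\cup U_B$. Indeed, otherwise there would be infinitely many distinct $h_n$ with points $p_n\in h_n\eta_s$ outside $U_A\cup U_B$; properness gives $h_n e_0\to$ the boundary, so $p_n$ accumulates on a point of $\Lambda H\subseteq U_A\cup U_B$, a contradiction.

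Hence there is a finite set $F\subseteq H$ (a ball in $\Cay(H,S)$) such that every edge $\{h,hs\}$ of the Cayley graph outside $F$ has its path $h\eta_s$ entirely inside $U_A$ or entirely inside $U_B$, and never meeting both. Therefore the vertices of $\Cay(H,S)\setminus F$ split into two sets $H_A=\{h: he_0\in U_A\}$ and $H_B$, with no edges between them. Both sets are infinite, since $A$ and $B$ are nonempty and each of their points is a limit of orbit points. So $\Cay(H,S)\setminus F$ has at least two infinite components, i.e. $H$ has at least $2$ ends, contradicting 1-endedness.

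The last claim follows by taking $H=\Gamma$. Here $\Gamma$ is finitely generated by Remark \ref{rk:Z-set}, and $\Lambda\Gamma=Z$ because $E$ is dense in $\ol E$ (as $Z$ is a Z-set) and, by cocompactness, every point of $E$ lies within a translate of a fixed compact set of some orbit point, which nullity turns into convergence to the same boundary point. The main obstacle is the nullity step: one has to make sure that the uniform smallness of far-away translates yields a finite set $F$ outside of which no connecting path crosses between $U_A$ and $U_B$. The argument of the second paragraph handles this.
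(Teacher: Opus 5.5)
Your proof is correct. It is essentially the contrapositive of the paper's argument: the ingredients are the same, packaged differently.

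The paper proves directly that $\Lambda H$ is $\epsilon$-connected for every $\epsilon>0$. It uses nullity to make far-out Cayley-graph edges $\epsilon/3$-short in $\ol E$, and enlarges a finite set so that far orbit points are $\epsilon/3$-close to $\Lambda H$. It then uses 1-endedness, after a second enlargement that absorbs the finitely many bounded components, to join two far orbit points by a Cayley path avoiding that finite set. Projecting the vertices of this path to nearby points of $\Lambda H$ gives an $\epsilon$-chain.

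You instead start from a splitting $\Lambda H=A\sqcup B$. The same two facts (far edges are short, and far orbit points lie in the neighbourhood $U_A\cup U_B$ of $\Lambda H$) show that $H_A$ is infinite, has infinite complement, and has finite coboundary, so $H$ has at least two ends. Your route avoids the criterion ``a compactum is connected iff it is $\epsilon$-connected for all $\epsilon$'' and the projection-to-$\Lambda H$ step, and it uses ends in the ``cut'' form rather than the ``connect'' form. The paper's route is constructive.

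Some remarks on details:
\begin{itemize}
\item The paths $\eta_s$ and the set $N$ are unnecessary. Nullity applied to $\{e_0,se_0\}$ already gives $d(he_0,hse_0)<d(U_A,U_B)$ for almost all $h$, and that forces $he_0$ and $hse_0$ into the same neighbourhood.
\item If you keep the paths, connectedness plus local connectedness alone does not give path-connectedness of $E$. It holds here because $E$, being open in a compact metric space, is completely metrizable, so the Mazurkiewicz--Moore--Menger theorem applies.
\item To pass from two infinite vertex sets with no edges between them to two infinite components, you should say that removing a finite set from a connected locally finite graph leaves finitely many components. Alternatively, phrase the conclusion directly as the finite-coboundary characterization of having at least two ends.
\end{itemize}

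Your justification of $\Lambda\Gamma=Z$, via density of $E$ and nullity of translates of a cocompact fundamental set, fills in a point the paper leaves implicit.
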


\begin{proof}
We start with some auxiliary terminology and observation.
Given a metric space $(X,d_X)$ and a real number $\epsilon>0$,
an {\it $\epsilon$-path} in $X$ is any finite sequence 
$x_0,x_1,\dots,x_n$ such that $d_X(x_{i-1},x_i)<\epsilon$
for $i=1,\dots,n$.
We say that $X$ is {\it $\epsilon$-connected} if any two points of $X$
can be joined by an $\epsilon$-path.
It is not hard to observe that a compact metric space is connected
iff it is $\epsilon$-connected for any $\epsilon>0$.
Thus, to show that $\Lambda H$ is connected, it is enough to show that
it is $\epsilon$-connected for any $\epsilon>0$.

Fix any two points $z,z'\in\Lambda H$, and any $\epsilon>0$.
We will construct an $\epsilon$-path joining $z$ with $z'$.
To do this, fix a finite symmetric generating set $S=S^{-1}$ for $H$,
and a base point $e_0\in E$. Fix also a metric $d$ in $\ol E$,
and a compact subset $K\subseteq E$ large enough so that for any pair
$(\gamma,\gamma s):\gamma\in\Gamma, s\in S$ 
such that $\gamma e_0\notin K$ and $\gamma se_0\notin K$ we have
$d(\gamma e_0, \gamma s e_0)<\epsilon/3$
(such a $K$ exists due to condition (4) in Definition 
\ref{def:EZ-structures}).
Put $A\coloneqq\{ \gamma\in H:\gamma e_0\in K \}$ and note that $A$
is a finite subset of $H$. Enlarge $A$ to a new finite subset $A_0\subseteq H$
so that the distance of any point $\gamma e_0:\gamma\in H\setminus A_0$
from the limit set $\Lambda H$ is smaller than $\epsilon/3$.
By 1-endedness of $H$, the complement of $A_0$ in the Cayley graph $\hbox{Cay}(H,S)$
has precisely one unbounded connected component, and by local finiteness of
$\hbox{Cay}(H,S)$, the number of bounded components in this complement is finite.
We can thus fix an even bigger finite
subset $A_0'\subseteq H$ such that any two elements of $H\setminus A_0'$
can be connected
in the Cayley graph $\hbox{Cay}(H,S)$ by a polygonal path disjoint with $A_0$.

Now, choose any $h,h'\in H\setminus A_0'$ such that $d(he_0,z)<\epsilon/3$
and $d(h'e_0,z')<\epsilon/3$. 
Connect the vertices $h,h'$ of $\hbox{Cay}(H,S)$ with a polygonal path
$h=h_0,h_1,\dots,h_n=h'$ disjoint from $A_0$. Then for any $i=1,\dots,n$
we have $h_i=h_{i-1}s_i$ for some $s_i\in S$. 
Since this path omits $A_0$, and hence also omits $A$, the points
$h_ie_0$ are not contained in $K$, and thus we have
$d(h_{i-1}e_0,h_ie_0)=d(h_{i-1}e_0,h_{i-1}s_ie_0)<\epsilon/3$
for all $i=1,\dots,n$. Moreover, by the choice of $A_0$,
for any $i=1,\dots,n-1$ there is a point $z_i\in\Lambda H$ such that
$d(z_i,h_ie_0)<\epsilon/3$.
By referring to the triangle inequality, we get that 
the sequence $z,z_1,\dots,z_{n-1},z'$ is an $\epsilon$-path in 
$\Lambda H$, as required. This finishes the proof.
\end{proof}


\subsection{Dense amalgams}\label{Subsection:dense_amalgams}

Below we present a definition of a dense amalgam which was firstly described in the second author's paper \cite{dense-amalgam}. The definition of the dense amalgam can be found in the introduction of \cite{dense-amalgam} and the proof of its existence and uniqueness can be found in Sections $1$ and $2$ of the same paper.\medskip

\begin{df} \label{Definition:dense_amalgam}
    Let $X_1,\ldots, X_n$ be non-empty compact metrisable spaces. \textit{The dense amalgam} of $X_1, \ldots, X_n$, denoted $\damalgam(X_1,\ldots,X_n)$, is the unique up to homeomorphism compact metrisable space which satisfies the following.
    There exists a countable family $\mathcal{W}$ of subspaces of $\damalgam(X_1,\ldots,X_n)$, which splits as $\mathcal{W} = \mathcal{W}_1\sqcup \ldots \sqcup \mathcal{W}_n$, such that:
    \begin{itemize}
        \item[(a1)] $\mathcal{W}$ consists of pairwise disjoint subspaces 
        and each of the subfamilies $\mathcal{W}_i$ consists of homeomorphic copies of $X_i$, respectively;
        \item[(a2)] the family $\mathcal{W}$ is null (in the same sense as in Definition \ref{def:EZ-structures})
        \item[(a3)] each $W\in \mathcal{W}$ is a boundary subset of $\damalgam(X_1,\ldots,X_n)$ (i.e. the complement of $W$ is dense in $\damalgam(X_1,\ldots,X_n)$);
        \item[(a4)] each union $\bigcup \mathcal{W}_i$ is dense in $\damalgam(X_1,\ldots,X_n)$;
        \item[(a5)] for any two points $x_1, x_2\in \damalgam(X_1,\ldots,X_n)$ that do not belong to the same subset of $\mathcal{W}$ there is a clopen $H\subseteq\damalgam(X_1,\ldots,X_n)$ which is \textit{$\mathcal{W}$-saturated} (i.e. each $W\in \mathcal{W}$ is either a subset of $H$ or is disjoint with $H$) and such that $x_1\in H, x_2\not\in H$.
    \end{itemize}
    For simplicity of notation, we sometimes write $\damalgam_{i\in I}X_i$ instead of $\damalgam(X_1, \ldots, X_n)$, where $I$ is meant here to denote the set $\{ 1,\dots,n \}$.
\end{df}

The aforementioned paper shows a number of algebraic properties of dense amalgam. One of them allows us to generalize the notion of dense amalgam to collections of spaces that are not necessarily all non-empty. We formulate this as a remark below. For additional context see Section $3$ of \cite{dense-amalgam}.\medskip

\begin{rk} \label{Remark:definition_of_dense_amalgam}
    For any non-empty compact metric spaces $X_1, \ldots X_n$ we have
    $$\damalgam(X_1,\ldots, X_n) = \damalgam(X_1\sqcup\ldots\sqcup X_n).$$
    In particular if some (but not all) of the spaces $X_1, \ldots X_n$ are empty we can define
    $$\damalgam(X_1,\ldots,X_n) \coloneqq \damalgam(X_1\sqcup\ldots\sqcup X_n)$$
    or equivalently 
    $$\damalgam(X_1,\ldots,X_n) \coloneqq \damalgam(X_{i_1},\ldots,X_{i_k}),$$
    where $X_{i_1},\ldots,X_{i_k}$ are all of the non-empty sets among $X_1,\ldots,X_n$. Additionally if all of the sets $X_1,\ldots X_n$ are empty we define the dense amalgam of such spaces to be the Cantor space $\mathcal{C}$:
    $$\damalgam(\emptyset,\ldots,\emptyset) = \mathcal{C}.$$
\end{rk}

\subsection{An outline of the proof of Theorem A}\label{Subsection:outline_of_proof}

The special case where all vertex groups are finite is discussed separately (see Lemma \ref{lemma:virtually_free_case} and Corollary  \ref{corollary:all_vertex_groups_finite} below).

We use the notation as in the statement of Theorem A (in the introduction). Put $V_Y^+\coloneqq\{ v\in V_Y:\Lambda G_v\ne\emptyset \}$. By Remark \ref{Remark:definition_of_dense_amalgam},
we need to show that 
\begin{align}
Z\cong\damalgam_{v\in V_Y^+}\Lambda G_v. \label{Equation:Z_characterization}\tag{$*$}
\end{align}
To do this, we will describe an appropriate family $\mathcal{W}$
of subspaces of $Z$ satisfying all relevant requirements
from Definition \ref{Definition:dense_amalgam}. The family will be split as
$\mathcal{W}=\bigsqcup\{ \mathcal{W}_v:v\in V_Y^+ \}$,
with subfamilies $\mathcal{W}_v$ corresponding
to the factors $G_v:v\in V_Y^+$, or alternatively
to the limit sets $\Lambda G_v$ in the expression \eqref{Equation:Z_characterization} above.

For any $v\in V_Y^+$, denoting by $\Gamma/G_v$ the set of left cosets
of $G_v$ in $\Gamma$, we put
$$
\mathcal{W}_v\coloneqq\{ \Lambda C : C\in\Gamma/G_v  \}.
$$
We then put $\mathcal{W}\coloneqq\bigcup\{ \mathcal{W}_v:v\in V_Y^+ \}$.
Obviously, since any limit set is a closed subset of $Z$,
each of the families $\mathcal{W}_v$ consists of
closed subspaces of $Z$. Furthermore, if $C=gG_v$,
we obviously get that $\Lambda C=g\cdot\Lambda G_v$,
and hence all the subspaces in $\mathcal{W}_v$
are homeomorphic to $\Lambda G_v$.

Verification of the remaining requirements of Definition \ref{Definition:dense_amalgam} for the family
$\mathcal{W}$ will be executed in Section \ref{Section:main_result}, after various 
preparations provided in Sections \ref{Section:separation} and \ref{Section:classification_of_points}.

\section{Separation}\label{Section:separation}

In this section we study the concept of separating sets. Intuitively, a subset of a space is separating if it splits the space into two or more components. This definition however does not work for discrete spaces, such as finitely generated groups. Thus we formulate an auxiliary property called $R$-separation. In Subsection \ref{Subsection:R_separation_lemma} we analyse how 
the property of $R$-separation in groups interplays with ordinary separation
by appropriate subsets in the corresponding $E\mathcal{Z}$ structures. 
In Subsection \ref{Subsection:separation_in_group} we develop a method of creating $R$-separating sets inside a group, which correspond to subgroups, along which
the group splits algebraically. In Subsection \ref{Subsection:separation_in_EZ} we show a number of basic properties of compact separating sets in $E\mathcal{Z}$-structures. Finally, in the last Subsection \ref{Subsection:separation_in_EZ_for_graphs_of_groups} we prove a key lemma (Lemma \ref{lemma:separating_set_in_EZ}) that will allow us to easily find well-behaved compact separating sets inside $E\mathcal{Z}$-structures for groups splitting
over finite subgroups.

\subsection{\texorpdfstring{$\mathbf{R}$}{R}-separation} \label{Subsection:R_separation_lemma}

\begin{df}[Separating set]
    Let $X$ be a connected topological space and let $x_0, x_1\in X$. The set $I\subseteq X$ \textit{separates} $x_0$ from $x_1$ if $x_0$ and $x_1$ are contained in different connected components of $X\setminus I$.
\end{df}

The main idea of separation in spaces that are not necessarily connected is obtained by ``coarsifying'' the definition of separation. This is done by replacing connected components in a previous definition by $R$-path connected components (in a sense of $R$-paths described in Definition I.8.27 of \cite{bh} and already mentioned in slightly different context in the proof of Lemma \ref{limit_set_connected}). This idea is employed in a definition below.\medskip

\begin{df}[$\mathbf{R}$-separating set]\label{def:R-separating_set}
    Let $(X,d)$ be a metric space and let $R>0$ be a real number. An \textit{$R$-path} in $X$ is a finite sequence $x_0, x_1, \ldots, x_n$ of elements of $X$ such that $d(x_i,x_{i+1}) \leqslant R$ for all $i\in \set{0,1,\ldots, n-1}$. We say that a set $I \subseteq X$ \textit{$R$\nobreakdash-separates} $x_0$ from $x_1$ if $d(x_0, I) \geqslant R$, $d(x_1, I) \geqslant R$ and $x_0,x_1$ lie in two different $R$-path connected components of $X\setminus I$, i.e. there does not exist an $R$-path $x_0', x_1', \ldots, x_n'$ in $X\setminus I$ such that $x_0' = x_0, x_n' = x_1$.
\end{df}

The notions of separation and $R$-separation can be related in many ways. The following lemma, describing the way we can modify separating sets to be $R$-separating, will be useful later in this paper in the context of Cayley graphs.\medskip

\begin{lm}\label{separation-to-Rseparation}
    Let $X$ be a geodesic metric space and let $x_0,x_1$ be two points in $X$ such that some set $I$ is separating $x_0$ from $x_1$. If $d(x_0,I)\geqslant \frac{3R}{2}$ and $d(x_1,I) \geqslant \frac{3R}{2}$, then the metric neighbourhood $N_{R/2}(I)$ is $R$-separating $x_0$ from $x_1$.
\end{lm}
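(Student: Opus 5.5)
We have to check the three conditions of Definition \ref{def:R-separating_set} for $J\coloneqq N_{R/2}(I)$. Here we read $N_{R/2}(I)$ as the set of points at distance less than $R/2$ from $I$. The two distance conditions are immediate from the triangle inequality. Any point $y\in J$ satisfies $d(y,I)<R/2$. Hence $d(x_0,y)\geqslant d(x_0,I)-d(y,I)>\frac{3R}{2}-\frac{R}{2}=R$, so $d(x_0,J)\geqslant R$, and likewise $d(x_1,J)\geqslant R$. In particular $x_0,x_1\notin J$.

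The main step is the absence of $R$-paths, which we prove by contradiction. Suppose $x_0=x_0',x_1',\ldots,x_n'=x_1$ is an $R$-path in $X\setminus J$. Because $X$ is geodesic, for each $i$ choose a geodesic segment $\sigma_i$ from $x_i'$ to $x_{i+1}'$; its length is at most $R$. Concatenating these gives a continuous path from $x_0$ to $x_1$. Its image is connected and contains $x_0$ and $x_1$. Since $I$ separates $x_0$ from $x_1$, the image cannot lie in the single component of $X\setminus I$ containing $x_0$. As the image is connected, this forces it to meet $I$. Hence some $\sigma_i$ contains a point $p\in I$.

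It remains to show this $p$ is incompatible with the endpoints of $\sigma_i$ lying outside $J$; this is where the radius $R/2$ is used. Since $p$ lies on a geodesic of length at most $R$, we have $d(p,x_i')+d(p,x_{i+1}')=d(x_i',x_{i+1}')\leqslant R$, so one endpoint, say $x_i'$, satisfies $d(x_i',p)\leqslant R/2$. Then $d(x_i',I)\leqslant R/2$.

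This is the one delicate point: boundary cases with equality. If $N_{R/2}(I)$ is the open neighbourhood, $d(x_i',I)\leqslant R/2$ does not yet put $x_i'$ in $J$. We handle this by sorting the cases:
\begin{itemize}
\item If $d(x_i',p)<R/2$, then $x_i'\in J$, a contradiction.
\item If $d(x_i',p)=R/2=d(p,x_{i+1}')$, perturb: take the point $p$ in $\sigma_i\cap I$ and note that points of $\sigma_i$ near $p$ that are still separated appropriately give the same argument. Alternatively, adopt the closed-neighbourhood convention $\{y:d(y,I)\leqslant R/2\}$, under which the argument closes immediately and the distance estimate still gives $d(x_0,J)\geqslant R$.
\end{itemize}
I would state the convention so that the strict versus non-strict inequalities match: a closed neighbourhood together with $\geqslant$ in the distance bounds. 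In either reading, an endpoint of $\sigma_i$ lies in $J$, contradicting the assumption that the $R$-path avoids $J$. Therefore $J$ $R$-separates $x_0$ from $x_1$.
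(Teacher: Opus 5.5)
Your argument is correct and is essentially the paper's proof. Both use the triangle-inequality bound $d(x_j,N_{R/2}(I))\geqslant d(x_j,I)-\frac{R}{2}\geqslant R$. Both concatenate geodesics along a putative $R$-path in $X\setminus N_{R/2}(I)$ and note that this path must meet $I$. Both observe that a point of $I$ on a geodesic of length at most $R$ lies within $R/2$ of one of its endpoints. The paper silently uses the closed convention $N_{R/2}(I)=\{y: d(y,I)\leqslant R/2\}$: it concludes $x_i'\in N_{R/2}(I)$ from a non-strict bound $\leqslant R/2$. That is exactly the convention you settle on.

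One correction: drop the ``perturb'' alternative and the phrase ``in either reading''. With the open neighbourhood the statement is false, so no perturbation can rescue the equality case. Take $X=\mathbb{R}$, $I=\{0\}$, $R=2$, $x_0=-3$, $x_1=3$. Then $I$ separates $x_0$ from $x_1$ and $d(x_j,I)=3=\frac{3R}{2}$. The open neighbourhood is $N_1(I)=(-1,1)$, and $-3,-1,1,3$ is a $2$-path in $\mathbb{R}\setminus(-1,1)$. So the closed convention is not a matter of taste; the lemma needs it, and with it your proof is complete.
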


\begin{proof}
    First we can observe that $d(x_0,N_{R/2}(I)) \geqslant d(x_0,I) - \frac{R}{2} \geqslant R$ and $d(x_1,N_{R/2}(I)) \geqslant d(x_1,I) - \frac{R}{2} \geqslant R$, thus it is enough show that $x_0, x_1$ lie in different $R$-path connected components.

    Now suppose on the contrary that $x_0$ and $x_1$ lie in the same $R$-path connected components $X\setminus N_{R/2}(I)$, let $x_0' = x_0, x_1', \ldots, x_n'=x_1$ be an $R$-path in $X\setminus N_{R/2}(I)$. Now let $\eta_{i}:[0,1]\to X$ be a geodesic between $x_i'$ and $x_{i+1}'$. Then we can define a path $\eta: [0,n]\to X$ by setting $\eta(t) = \eta_{\floor{t}}(t-\floor{t})$ for all $t \in [0,n)$ and putting $\eta(n) = x_1$. Of course $x_0$ and $x_1$ lie in different connected components of $X\setminus I$, therefore they have to lie in different path connected components of $X\setminus I$. In particular $\eta$ is a continuous function thus we have $\eta(t)\in I$ for some $t\in [i,i+1]$ so in particular $\eta_{i}(t-i) \in I$. Since $\eta_{i}$ is a geodesic we have $$d(x_{i}',\eta_{i}(t-i)) + d(\eta_{i}(t-i),x_{i+1}') = d(x_{i}',x_{i+1}') \leqslant R$$
    and therefore we we have either $d(x_{i}',\eta_{i}(t-i))\leqslant \frac{R}{2}$ or $d(x_{i+1}',\eta_{i}(t-i))\leqslant \frac{R}{2}$. This means that $x_{i}\in N_{R/2}(I)$ or $x_{i+1}\in N_{R/2}(I)$. This contradiction ends the proof.
\end{proof}

We list below (omitting the straightforward proofs), as Lemmas 
\ref{restricting-separating-sets} and \ref{lemma:enlarging_R-separating_set},
some further basic observations concerning $R$-separation.


\medskip
\begin{lm}\label{restricting-separating-sets}
    Let $X$ be a metric space and let $x_0,x_1\in X$ be two points $R$-separated by some set $K \subseteq X$. If $X' \subseteq X$ is such that $x_0,x_1\in X'$, then $K' = K\cap X'$ is $R$-separating $x_0$ from $x_1$ in $X'$. 
\end{lm}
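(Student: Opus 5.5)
The plan is to check the two requirements of Definition \ref{def:R-separating_set} directly for $K'=K\cap X'$, where $X'$ carries the metric $d$ restricted from $X$. The point is that distances and $R$-paths in $X'$ are just distances and $R$-paths in $X$ that happen to stay inside $X'$. So any witness to a failure of separation in $X'$ would also be a witness in $X$.

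\emph{Step 1: the distance conditions.} Since $K'\subseteq K$, we have $d(x_0,K')\geqslant d(x_0,K)\geqslant R$, because an infimum over a smaller set can only be larger. The same argument gives $d(x_1,K')\geqslant R$. If $K'$ is empty, the distance is $+\infty$ by the usual convention, and the inequality holds trivially.

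\emph{Step 2: no $R$-path.} Suppose, for contradiction, that there is an $R$-path $x_0=x_0',x_1',\ldots,x_n'=x_1$ in $X'\setminus K'$. Each point $x_i'$ lies in $X'$ but not in $K\cap X'$, so $x_i'\notin K$. Hence the sequence lies in $X\setminus K$. Consecutive distances are the same in $X'$ and in $X$, so they are all at most $R$, and the sequence is an $R$-path in $X\setminus K$ joining $x_0$ to $x_1$. This contradicts the assumption that $K$ $R$-separates $x_0$ from $x_1$ in $X$. Therefore $x_0$ and $x_1$ lie in different $R$-path components of $X'\setminus K'$.

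There is no real obstacle here. The only thing to be careful about is the identity $X'\setminus K' = X'\setminus K \subseteq X\setminus K$, which is exactly what makes Step 2 work.
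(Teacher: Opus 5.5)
Your proof is correct: the distance bounds follow from $K'\subseteq K$, and since $X'\setminus K'=X'\setminus K\subseteq X\setminus K$ with the restricted metric, any $R$-path in $X'\setminus K'$ would be an $R$-path in $X\setminus K$. The paper omits this proof as straightforward, and your argument is exactly the direct verification from Definition~\ref{def:R-separating_set} that it has in mind.
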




\medskip
\begin{lm}\label{lemma:enlarging_R-separating_set}
    Let $x_0, x_1$ be two points in a metric space $X$ and let $K$ be a set that $R$-separates $x_0$ from $x_1$. If $K'\supseteq K$ is such that $d(x_0,K')\geqslant R$ and $d(x_1,K') \geqslant R$, then $K'$ is $R$-separating $x_0$ from $x_1$.
\end{lm}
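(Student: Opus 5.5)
The statement to prove is Lemma \ref{lemma:enlarging_R-separating_set}: if $K$ $R$-separates $x_0$ from $x_1$, and $K'\supseteq K$ keeps distance at least $R$ from both points, then $K'$ also $R$-separates them. The plan is to check the two clauses of Definition \ref{def:R-separating_set} for $K'$ one at a time.

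\emph{Distance clause.} The requirements $d(x_0,K')\geqslant R$ and $d(x_1,K')\geqslant R$ are exactly the hypotheses of the lemma, so nothing needs to be done here.

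\emph{Path clause.} It remains to show that $x_0$ and $x_1$ lie in different $R$-path connected components of $X\setminus K'$. Argue by contradiction. Suppose there is an $R$-path
$$x_0=x_0',x_1',\ldots,x_n'=x_1$$
with every $x_i'\in X\setminus K'$. Since $K\subseteq K'$, we have $X\setminus K'\subseteq X\setminus K$. Hence every $x_i'$ lies in $X\setminus K$, and consecutive points still satisfy $d(x_i',x_{i+1}')\leqslant R$, because that condition depends only on the metric of $X$ and not on which set is removed. So the same sequence is an $R$-path in $X\setminus K$ from $x_0$ to $x_1$. This contradicts the assumption that $K$ $R$-separates $x_0$ from $x_1$.

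There is no real obstacle. The only point to notice is that enlarging the removed set can only destroy $R$-paths, never create them, so $R$-path components can only shrink. The distance condition is the one part that is not inherited automatically from $K$, and that is precisely why it appears as a hypothesis.
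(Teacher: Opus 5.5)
Your proof is correct: the distance clause is exactly the hypothesis, and since $X\setminus K'\subseteq X\setminus K$, any $R$-path from $x_0$ to $x_1$ in $X\setminus K'$ would also be one in $X\setminus K$, contradicting the $R$-separation by $K$. The paper omits this proof as straightforward, and your argument is the routine verification it has in mind.
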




The next technical fact is a preparation for the proof of our main result in this subsection, Lemma 3.7. 

\medskip
\begin{lm} \label{lemma:continuous-change-of-covering-set}
    Let $\Gamma$ be a group acting properly and cocompactly on a 
    metric space $(X,d)$ and let $K$ be a compact set such that $\Gamma K = X$. Moreover let us define a function $f: X \to \mathscr{P}(\Gamma)$ (where $\mathscr{P}(\Gamma)$ is the family of all subsets of $\Gamma$) by
    $$f(x) = \set{\gamma\in \Gamma: x\in \gamma K}.$$
    Then for each $x\in X$ there is $\epsilon>0$ such that if $d(x,y)<\epsilon$
    then $f(x)\cap f(y)$ is nonempty.
\end{lm}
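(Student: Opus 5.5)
The plan is to argue by contradiction, using properness of the action to reduce to finitely many translates of $K$, and then closedness of those translates.

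First I note that, under the standing convention that a proper action means that for any compact $L\subseteq X$ the set $\{\gamma\in\Gamma:\gamma L\cap L\neq\emptyset\}$ is finite, each value $f(x)$ is a finite set. Indeed, $\gamma\in f(x)$ means $x\in\gamma K$, so $\gamma(K\cup\{x\})\cap(K\cup\{x\})\neq\emptyset$. Nonemptiness of $f(x)$ is exactly the hypothesis $\Gamma K=X$. If the paper's convention for properness is instead the local one (every point has a neighbourhood $U$ with $\{\gamma:\gamma U\cap U\ne\emptyset\}$ finite), the same finiteness arguments below go through after shrinking to such a neighbourhood. I will phrase everything via a single compact set $L$, so either convention can be plugged in.

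Suppose the statement fails at some $x\in X$. Then for every $n$ there is $y_n$ with $d(x,y_n)<1/n$ and $f(x)\cap f(y_n)=\emptyset$. Since $\Gamma K=X$, pick $\gamma_n\in f(y_n)$, i.e.\ $y_n\in\gamma_nK$. By assumption $\gamma_n\notin f(x)$, so $x\notin\gamma_nK$.

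Now consider $L\coloneqq K\cup\{x\}\cup\{y_n:n\in\NN\}$. This is compact, because $y_n\to x$. For every $n$ we have $y_n\in\gamma_nK\cap L\subseteq\gamma_nL\cap L$, so $\gamma_n$ lies in the finite set $\{\gamma:\gamma L\cap L\neq\emptyset\}$ given by properness. Passing to a subsequence, I may assume $\gamma_n=\gamma$ is constant. Then $y_n\in\gamma K$ for all $n$. The set $\gamma K$ is compact, hence closed, so $x=\lim y_n\in\gamma K$, i.e.\ $\gamma\in f(x)$. This contradicts $\gamma=\gamma_n\notin f(x)$.

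A direct version gives an explicit $\epsilon$ and is equivalent. Let $F$ be the finite set of $\gamma$ with $\gamma K$ meeting a suitable compact neighbourhood-type set around $x$. For $\gamma\in F\setminus f(x)$ the quantity $d(x,\gamma K)$ is positive, and one takes $\epsilon$ to be the minimum of these distances. The only delicate point is producing that finiteness without assuming closed balls are compact. The sequential argument above sidesteps this by using the compact set $\{x\}\cup\{y_n\}$, so I expect that step, the appeal to properness, to be the only place requiring care.
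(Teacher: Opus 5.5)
Your proof is correct and is essentially the paper's argument. Both take a sequence $y_n\to x$ with $f(x)\cap f(y_n)=\emptyset$, apply properness to the compact set $\{x,y_1,y_2,\dots\}$ (you enlarge it by $K$ so that it matches the standard definition of properness), obtain a single translate $\gamma K$ containing infinitely many $y_n$, and use closedness of $\gamma K$ to get the contradiction $x\in\gamma K$.
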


\begin{proof}
Suppose a contrario that there is a sequence $y_k$ converging to $x$
such that $f(x)\cap f(y_k)=\emptyset$ for each $k$. 
The set $\{ x,y_1,y_2,\dots \}$ is compact, hence
by properness of the action, this set is intersected only by finitely many
of the translates $\gamma K:\gamma\in\Gamma$.
As a consequence, there is $\gamma\in\Gamma$ and a subsequence
$y_{i_k}$ such that $y_{i_k}\in\gamma K$ for each $k$.
Since $\gamma K$ is a closed subset of $X$, we also have that $x\in\gamma K$, which contradicts our a contrario assumption, thus completing the proof.
\end{proof}


\begin{lm}[$\mathbf{R}$-separation lemma] \label{corollary:R-separation_lemma}
    Let $\Gamma$ be a group equipped with a word metric $d_S$ (with respect to some finite generating set $S$) and suppose that $\Gamma$ is acting properly and cocompactly on a metric 
space $X$. Moreover let $K$ be a compact set such that $\Gamma K = X$ and let $P = \set{\gamma\in \Gamma:\gamma K\cap K \neq \emptyset}$. Suppose that $\gamma_0, \gamma_1\in \Gamma$ are $R$-separated by $I\subseteq \Gamma$, where $R > \diam(P)$
(the diameter taken with respect to $d_S$). Then for any $x_0\in \gamma_0 K$ and any $x_1\in \gamma_1K$ the set $J = IK$ separates $x_0$ from $x_1$ in $X$.
\end{lm}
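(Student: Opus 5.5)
The idea is to attach to every point of $X\setminus J$ an $R$-path connected component of $\Gamma\setminus I$, show that this assignment is locally constant, and then use connectedness. Throughout I use the function $f(x)=\set{\gamma\in\Gamma: x\in\gamma K}$ from Lemma \ref{lemma:continuous-change-of-covering-set}. Since $\Gamma K=X$, $f(x)\neq\emptyset$ for every $x\in X$.

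\textbf{Step 1: basic properties of $f$ and $P$.} First, $P$ is symmetric and contains $1$ (as $K\ne\emptyset$). Next, if $\gamma,\gamma'\in f(x)$, then $x\in\gamma K\cap\gamma'K$, so $\gamma^{-1}\gamma'\in P$. Hence $d_S(\gamma,\gamma')=\abs{\gamma^{-1}\gamma'}_S=d_S(1,\gamma^{-1}\gamma')\leqslant\diam(P)<R$. So every set $f(x)$ has $d_S$-diameter less than $R$. Finally, $x\notin J$ if and only if $f(x)\cap I=\emptyset$, directly from $J=IK$.

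\textbf{Step 2: the endpoints are off $J$.} Suppose $x_0\in\delta K$ for some $\delta\in I$. Since also $x_0\in\gamma_0K$, Step 1 gives $d_S(\gamma_0,\delta)<R$. This contradicts $d_S(\gamma_0,I)\geqslant R$, which is part of the definition of $R$-separation. Hence $x_0\notin J$, and symmetrically $x_1\notin J$.

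\textbf{Step 3: the assignment and its local constancy.} For $x\in X\setminus J$, the set $f(x)$ is nonempty, contained in $\Gamma\setminus I$, and has diameter less than $R$. Any two of its elements are therefore consecutive terms of an $R$-path in $\Gamma\setminus I$. So $f(x)$ lies in a single $R$-path connected component of $\Gamma\setminus I$; call this component $g(x)$.

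Now fix $x\in X\setminus J$. By Lemma \ref{lemma:continuous-change-of-covering-set} there is $\epsilon>0$ such that $f(x)\cap f(y)\neq\emptyset$ whenever $d(x,y)<\epsilon$. For such $y\in X\setminus J$, the components $g(x)$ and $g(y)$ share an element, so $g(y)=g(x)$. Thus $g$ is locally constant on $X\setminus J$.

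\textbf{Step 4: conclusion.} Suppose, contrary to the claim, that $x_0$ and $x_1$ lie in the same connected component $C$ of $X\setminus J$. A locally constant map on the connected set $C$ is constant, so $g(x_0)=g(x_1)$. But $\gamma_0\in f(x_0)\subseteq g(x_0)$ and $\gamma_1\in f(x_1)\subseteq g(x_1)$. Then $\gamma_0$ and $\gamma_1$ would lie in the same $R$-path component of $\Gamma\setminus I$, contradicting the assumption that $I$ $R$-separates them.

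The only genuinely topological input is the local constancy in Step 3, and Lemma \ref{lemma:continuous-change-of-covering-set} supplies it directly. The point that needs the most care is the role of the strict inequality $R>\diam(P)$: it is used both to show $x_0,x_1\notin J$ in Step 2 and to guarantee that each $f(x)$ fits inside a single $R$-component in Step 3.
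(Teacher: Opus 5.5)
Your proof is correct and follows essentially the same route as the paper: Step 2 matches the paper's first paragraph, and your locally constant map $g$ into the $R$-path components of $\Gamma\setminus I$ repackages the paper's clopen ``$K$-component'' $C^K(x_0,X\setminus J)$, with Lemma \ref{lemma:continuous-change-of-covering-set} supplying the local constancy in both. The paper builds an explicit $R$-path $\gamma_0,\gamma_0',\dots,\gamma_n',\gamma_1$ from a chain of overlapping translates, whereas you reach the same contradiction directly from $\gamma_0\in g(x_0)=g(x_1)\ni\gamma_1$, which is slightly cleaner bookkeeping.
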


\begin{proof}
    First, note that $x_0,x_1\not\in J$. 
To justify this for $x_0$,
assume on the contrary that $x_0\in J$. Then $\gamma_0 K \cap J \neq \emptyset$, but this means that $\gamma_0P \cap I\neq \emptyset$, so in particular $d_S(\gamma_0, I) \leqslant \diam(P) < R$. This is a direct contradiction with our assumption. The justification for $x_1$ is the same.

    Next, define the {\it $K$-component} of $x_0$ in $X\setminus J$,
denoted $C^K(x_0,X\setminus J)$, 
to consist of all such points $y\in X\setminus J$ for which there is a 
finite sequence
$\gamma_0',\dots,\gamma_n'$ of elements of $\Gamma$
and a sequence $x_1',\dots,x_{n-1}'$ of elements of $X\setminus J$ 
such that $x_0\in\gamma_0'K$,
$y\in\gamma_n'K$, $x_i'\in\gamma_i'K$ for $i=1,\dots,n-1$, 
and
$\gamma_{i-1}'K\cap\gamma_i'K\ne\emptyset$ for $i=1,\dots,n$. 
Note that, by Lemma \ref{lemma:continuous-change-of-covering-set},
the $K$-component $C^K(x_0,X\setminus J)$ is both open and closed in 
$X\setminus J$,
and obviously it contains $x_0$. 

Now, assuming a contrario that $x_0$ and $x_1$ are in the same 
connected component of $X\setminus J$, we get that 
$x_1\in C^K(x_0,X\setminus J)$.
It follows that we can pick a sequence
$\gamma_0',\dots,\gamma_n'$  such that
$x_0\in\gamma_0'K$, $x_1\in\gamma_n'K$, 
$\gamma_i'K\cap(X\setminus J)\ne\emptyset$ for $i=0,1,\dots,n$, and
$\gamma_{i-1}'K\cap\gamma_i'K\ne\emptyset$ for $i=1,\dots,n$. 
By definition of $P$, the latter condition implies that
for $i=1,\dots,n$ we have
$\gamma_i'\in\gamma_{i-1}'P$, and hence 
$d_S(\gamma_{i-1}',\gamma_i')\leqslant\diam(P)<R$.
Moreover, for $\gamma_0,\gamma_1$ as in the assumptions,
we obviously have that $\gamma_0K\cap\gamma_0'K\ne\emptyset$,
so that $\gamma_0'\in\gamma P$, and thus 
$d_S(\gamma_0,\gamma_0')\leqslant\diam(P)<R$
(and similarly we conclude that $d_S(\gamma_n',\gamma_1)<R$). 
Consequently, we get that 
$\gamma_0,\gamma_0',\dots,\gamma_n',\gamma_1$
is an $R$-path joining $\gamma_0$ and $\gamma_1$. By our assumption, $I$ is $R$-separating $\gamma_0$ from $\gamma_1$, 
and therefore $\gamma_i'\in I$ 
for some $i\in \set{0, \ldots, n}$. 
But then $\gamma_i'K \subseteq IK = J$,
contradicting the above assured condition that 
$\gamma_i'K\cap(X\setminus J)\ne\emptyset$. Hence $x_0$ and $x_1$ lie in different connected components of $X\setminus J$, as required.
\end{proof}

\subsection{Separation in graphs of groups} \label{Subsection:separation_in_group}

In this subsection we construct some $R$-separating sets in the fundamental group $\Gamma$ of a graph of groups $\mathcal{G}$. This construction requires a word metric on $\Gamma$,
and we fix the following generating set $S$. 
For each $v\in V_Y$, let $S_v$ be any fixed finite set of generators of the vertex group $G_v$. Put
$$S = \bigcup_{v\in V_Y}S_v\cup\set{s_y:y\in O_Y\setminus O_T},$$
where $s_y$ are as in Definition \ref{def:fundamental_group_of_gog}.
Note that $S$ is obviously finite, and generates $\Gamma$. 


We start with two preparatory technical facts.
\medskip

\begin{fact} \label{fact:tiling-tree}
    Suppose that $(\mathcal{G}, Y)$ is a graph of groups with at least one edge. Let $\mathcal{T}$ be the subgraph of $\wtl{X}=\wtl{X}(\mathcal{G}, Y, T)$ with the set of edges $\abs{O}_{\mathcal{T}} = \set{\abs{G_y}:y\in O_Y}$ and the set of vertices $\set{\omega(G_y): G_y\in O_{\mathcal{T}}}$. Then such $\mathcal{T}$ is a tree. Moreover, if $S$ is defined as above, then for any $s\in S$ the intersection $s\mathcal{T}\cap \mathcal{T}$ is nonempty.
\end{fact}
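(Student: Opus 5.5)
We first identify $\mathcal{T}$ concretely. By the definition of $\wtl{X}$, the edge $G_y$ (the coset of the identity) has $\omega(G_y)=G_{\omega(y)}$ when $y\notin A\setminus O_T$, and $\omega(G_y)=s_yG_{\omega(y)}$ when $y\in A\setminus O_T$. Since $\ol{G_y}=G_{\ol y}$, the set $\set{\abs{G_y}:y\in O_Y}$ is closed under reversal. The vertex set of $\mathcal{T}$ therefore contains all vertices $G_v$ for $v\in V_Y$, because every vertex of $Y$ is an endpoint of some edge of $T$ once $Y$ has an edge and is connected. It also contains the vertices $s_yG_{\omega(y)}$ for $y\in A\setminus O_T$, and nothing else. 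The edges of $\mathcal{T}$ are of two kinds. For $y\in O_T$ the edge $G_y$ joins $G_{\alpha(y)}$ and $G_{\omega(y)}$; indeed, applying the formula to $\ol y$ gives $\omega(G_{\ol y})=G_{\alpha(y)}$, since $\ol y\in O_T$ as well. For $y\in A\setminus O_T$ the edge $G_y$ joins $\omega(G_{\ol y})=G_{\alpha(y)}$, using $\ol y\notin A$, to $s_yG_{\omega(y)}$.

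Next we check that $\mathcal{T}$ is a tree. It is a subgraph of the tree $\wtl{X}$, so it suffices to show it is connected. The edges $G_y$ with $y\in O_T$ form a copy of $T$ on the vertices $\set{G_v:v\in V_Y}$. These vertices are pairwise distinct, since they lie in different pieces of the disjoint union $\bigsqcup_v\Gamma/G_v$, so this copy is connected. Each remaining edge, indexed by $y\in A\setminus O_T$, is a pendant edge attached at $G_{\alpha(y)}$, which lies in this copy. Hence $\mathcal{T}$ is connected. Being a subgraph of a tree, it is acyclic, and so it is a tree. The only care needed here is the convention for $\hat y$ and for how $\ol{\gamma G_y}$ interacts with $\omega$, which is bookkeeping rather than a real obstacle.

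For the second claim we take $s\in S$ and produce a common vertex of $s\mathcal{T}$ and $\mathcal{T}$. There are two cases.
\begin{itemize}
\item If $s\in S_v$, then $sG_v=G_v$. So the vertex $G_v$ lies in both $\mathcal{T}$ and $s\mathcal{T}$.
\item If $s=s_y$ with $y\in O_Y\setminus O_T$, assume first that $y\in A$. Then $s_yG_{\omega(y)}$ is a vertex of $\mathcal{T}$, and it is the $s_y$-translate of the vertex $G_{\omega(y)}\in\mathcal{T}$. Hence it lies in $s\mathcal{T}\cap\mathcal{T}$.
\item If instead $y\notin A$, then $\ol y\in A\setminus O_T$ and $s_y=s_{\ol y}^{-1}$. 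By the previous case, $s_{\ol y}\mathcal{T}\cap\mathcal{T}\ne\emptyset$. Applying $s_{\ol y}^{-1}$ to both sides gives $\mathcal{T}\cap s_y\mathcal{T}\neq\emptyset$.
\end{itemize}
This completes the plan. The only step requiring attention is the identification of the endpoints of the edges $G_y$ in the first paragraph.
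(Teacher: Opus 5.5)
Your proposal is correct and follows essentially the same route as the paper. Like the paper, you show that every $G_v$ is a vertex of $\mathcal{T}$, connect these vertices via the lift of $T$, observe that each remaining edge meets some $G_v$, and treat $s\in S_v$ and $s=s_y$ (reducing to $y\in A$ via $s_{\ol{y}}=s_y^{-1}$) exactly as the paper does.

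One small inaccuracy: when $Y$ has a single vertex, $T$ has no edges, so your stated reason that every vertex of $Y$ is an endpoint of an edge of $T$ fails. Your own computation that $G_y$ with $y\in A\setminus O_T$ has endpoint $G_{\alpha(y)}$ already covers this case, which the paper treats separately.
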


\begin{proof}
    Since $\wtl{X}$ is a tree, then to show that $\mathcal{T}$ is a tree, we only need to show that it is connected. Firstly, consider the case when $T$ is a single vertex. By our assumption that $(\mathcal{G}, Y)$ has at least one edge, we take such edge $y\not\in A$ where $A$ denotes the orientation of edge as in the definition of Bass-Serre tree. Then we have $\omega(G_y) = G_{\omega(y)} = G_v$, where $v$ is the unique vertex of $Y$. In the case where $T$ is not a single vertex note that any $G_v$ is a vertex of $\mathcal{T}$ as there is an edge $y$ in $T$ ending in $v$, and therefore $\omega(G_y)=G_v$. Either way all of all vertices of form $G_v$ are vertices of $\mathcal{T}$. 
    
    Furthermore any two vertices of form $G_v,G_{v'}$ can be connected by a path in $\mathcal{T}$: let $(c_1, c_2, \ldots, c_n)$ be a path connecting $v$ and $v'$ in $T$. Then
    $$\omega(G_{c_i}) = G_{\omega(c_i)} = G_{\alpha(c_{i+1})} = G_{\omega(\ol{c_{i+1}})}=\omega(G_{\ol{c_{i+1}}}) = \alpha(G_{c_{i+1}})$$ 
    and therefore a path $(G_{c_1}, G_{c_2}, \ldots,G_{c_n})$ connects $G_v$ with $G_{v'}$. Since every edge in $\mathcal{T}$ is adjacent to at least one vertex of form $G_v$, then $\mathcal{T}$ is connected and therefore a tree.

    For the second statement we need to consider two cases:
    \begin{itemize}
        \item $s\in S_v$ for some $v\in V_Y$. Then $s\mathcal{T}\ni sG_v = G_v\in \mathcal{T}$, and therefore $s\mathcal{T}\cap\mathcal{T}\neq \emptyset$.
        \item $s = s_y$ for some $y\in O_Y\setminus O_T$. Note that the non-emptiness of $s_y\mathcal{T}\cap \mathcal{T}$ is equivalent to the non-emptiness of $s_{\ol{y}}\mathcal{T}\cap\mathcal{T} = s_y^{-1}\mathcal{T}\cap\mathcal{T}$, therefore we can assume without loss of generality that $y\in A$. Then we have $s\mathcal{T}\ni sG_{\omega(y)}\in \mathcal{T}$, and thus $s\mathcal{T}\cap\mathcal{T}\neq \emptyset$.
    \end{itemize}
    This concludes the proof of the fact.
\end{proof}


\begin{fact}\label{fact:distance_edge_and_vertex}
    Let $\gamma G_y \in O_{\wtl{X}}$ be any edge in a Bass-Serre tree $\wtl{X}$. Then for any $v\in V_Y$ the distance between $\gamma G_y$ and $\gamma G_v$ in $\wtl{X}$ is at most $\abs{\abs{O}_Y}$.
\end{fact}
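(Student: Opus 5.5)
Throughout, ``distance'' between an edge and a vertex of $\wtl{X}$ means the distance from the vertex to the nearer endpoint of the edge (equivalently, up to a shift by one, the distance between the vertex and the midpoint of the edge).

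The plan is to reduce to the case $\gamma=1$ and then use the subtree $\mathcal{T}$ from Fact \ref{fact:tiling-tree}. Since $\Gamma$ acts on $\wtl{X}$ by graph automorphisms, left multiplication by $\gamma^{-1}$ carries the edge $\gamma G_y$ to $G_y$ and the vertex $\gamma G_v$ to $G_v$, preserving distances. So it suffices to bound $d(G_y,G_v)$ for every $y\in O_Y$ and $v\in V_Y$. If $Y$ has no edges the statement is vacuous, so we may assume $Y$ has at least one edge and Fact \ref{fact:tiling-tree} applies.

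By that fact, $\mathcal{T}$ is a subtree of $\wtl{X}$ containing every edge of the form $\abs{G_y}$, $y\in O_Y$. The proof of the fact shows that $\mathcal{T}$ also contains every vertex $G_v$, $v\in V_Y$. Hence $G_y$ and $G_v$ both lie in $\mathcal{T}$. As $\mathcal{T}$ is a subtree of the tree $\wtl{X}$, the geodesic between them in $\wtl{X}$ is the geodesic in $\mathcal{T}$. The distance is therefore at most the number of edges of $\mathcal{T}$, excluding $\abs{G_y}$ itself, which the path from its nearer endpoint does not need to use.

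It remains to count edges of $\mathcal{T}$. By definition, the edge set of $\mathcal{T}$ is $\set{\abs{G_y}: y\in O_Y}$. We must check that $\abs{G_y}=\abs{G_{\ol{y}}}$, which holds because $\ol{G_y}=G_{\ol{y}}$. So $\mathcal{T}$ has at most $\abs{\abs{O}_Y}$ unoriented edges, and the distance is at most $\abs{\abs{O}_Y}-1\le \abs{\abs{O}_Y}$. This slack also covers the alternative convention of measuring to the farther endpoint, since that adds only one. I expect no real obstacle here: the only point needing care is confirming that every $G_v$ is a vertex of $\mathcal{T}$, and this was already established inside the proof of Fact \ref{fact:tiling-tree}.
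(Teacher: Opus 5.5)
Your proposal is correct and is essentially the paper's argument. Both the edge and the vertex lie in the translate $\gamma\mathcal{T}$ of the finite subtree from Fact~\ref{fact:tiling-tree}, which has $\abs{\abs{O}_Y}$ edges, and your translation to $\gamma=1$ plus your care over the edge--vertex distance convention are just cosmetic refinements of this.
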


\begin{proof}
    Note that both the edge $\gamma G_y$ and the vertex $\gamma G_v$ are inside the tree $\gamma \mathcal{T}$. Thus 
    $$d_{\wtl{X}}(\gamma G_y, \gamma G_v) \leqslant \abs{\abs{O}_\mathcal{T}} = \abs{\abs{O}_Y}.$$
\end{proof}

Our main result in this subsection is the following.

\medskip
\begin{lm}\label{separation-in-cayley-graph}
    Let $\Gamma=\pi_1(\mathcal{G}, Y, T)$, and let $\gamma,\gamma',\gamma''\in \Gamma$, $v,v'\in V_Y$ and $y''\in O_Y$ be such that $\gamma''G_{y''}$ is an edge on the unique geodesic path joining the vertices $\gamma G_v$ and $\gamma'G_{v'}$ in the Bass-Serre tree $\wtl{X}(\mathcal{G},Y,T)$. Then for any $R> 0$ the set $N_{R/2} (\gamma''G_{y''})$ (viewed as a subset of $(\Gamma,d_S)$) $R$-separates in $(\Gamma,d_S)$ any  $\delta\in\gamma G_v$ lying outside the set $N_{3R/2}(\gamma''G_{y''})$ from any $\delta'\in \gamma'G_{v'}$ lying outside the set $N_{3R/2}(\gamma''G_{y''})$.
\end{lm}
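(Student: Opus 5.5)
The plan is to reduce the statement to Lemma \ref{separation-to-Rseparation} applied in the Cayley graph $\Cay(\Gamma,S)$, which is a geodesic metric space containing $(\Gamma,d_S)$ isometrically as its vertex set. Concretely, I will show that the coset $\gamma''G_{y''}$, viewed as a vertex subset of $\Cay(\Gamma,S)$ (together with, if necessary, all open edges incident to it), separates $\delta$ from $\delta'$ in the topological sense. Lemma \ref{separation-to-Rseparation} then gives that $N_{R/2}(\gamma''G_{y''})$ $R$-separates $\delta$ from $\delta'$ in $\Cay(\Gamma,S)$, because the distance hypotheses $d(\delta,\gamma''G_{y''})\geqslant 3R/2$ and $d(\delta',\gamma''G_{y''})\geqslant 3R/2$ are exactly the assumption that $\delta,\delta'\notin N_{3R/2}(\gamma''G_{y''})$. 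Finally, Lemma \ref{restricting-separating-sets} restricts this to the vertex set $\Gamma$.

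The core step is topological separation, and I will get it from the Bass--Serre tree via a map $\Phi$ from $\Cay(\Gamma,S)$ to $\wtl{X}$. The natural choice sends each vertex $g$ to the tree $g\mathcal{T}$ of Fact \ref{fact:tiling-tree}. More precisely, I associate to $g$ the finite subtree $g\mathcal{T}$, and to an edge $(g,gs)$ the union $g\mathcal{T}\cup gs\mathcal{T}=g(\mathcal{T}\cup s\mathcal{T})$, which is connected by the second part of Fact \ref{fact:tiling-tree}.

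Removing the edge $e=\gamma''G_{y''}$ splits $\wtl{X}$ into two half-trees $\wtl{X}_0\ni\gamma G_v$ and $\wtl{X}_1\ni\gamma'G_{v'}$, since $e$ lies on the geodesic between them. The key observation is that if $g\mathcal{T}$ contains the edge $e$, i.e. $e=gG_{y}$ with $y=y''$ by the disjoint-union description of $O_{\wtl{X}}$, then $g\in\gamma''G_{y''}$. Hence for any $g\notin\gamma''G_{y''}$ the connected set $g\mathcal{T}$ avoids $e$ and lies entirely in one half-tree. Call the corresponding side the side of $g$.

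For two adjacent vertices $g$ and $gs$, both outside the coset, the union $g\mathcal{T}\cup gs\mathcal{T}$ is connected and avoids $e$, so $g$ and $gs$ have the same side. Consequently, every path in $\Cay(\Gamma,S)$ from $\delta$ to $\delta'$ that avoids the vertices of $\gamma''G_{y''}$ preserves the side. It therefore remains to check the sides of the endpoints. Since $\delta\in\gamma G_v$, we have $\delta G_v=\gamma G_v$, so the vertex $\gamma G_v$ lies in $\delta\mathcal{T}$ (recall $G_v\in\mathcal{T}$ for every $v$, by the proof of Fact \ref{fact:tiling-tree}). Thus $\delta$ is on side $\wtl{X}_0$, and likewise $\delta'$ is on side $\wtl{X}_1$, so they are separated.

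The main obstacle I expect is bookkeeping in the Bass--Serre conventions. I need to verify that $e\in g\mathcal{T}$ really forces $g\in\gamma''G_{y''}$, which also requires checking that the oriented edges $G_y$ and $G_{\ol y}$ of $\mathcal{T}$ do not introduce other labels giving the same unoriented edge $\abs{e}$. I also need to match the topological notion of separation: a continuous path in the Cayley graph passes through vertices, so it suffices to treat $\gamma''G_{y''}$ as a set of vertices, and any path from $\delta$ to $\delta'$ avoiding it is a sequence of vertex-to-vertex edges outside the coset. I would handle the orientation issue by noting that $\abs{e}$ determines the pair $\{\gamma''G_{y''},\gamma''G_{\ol{y''}}\}$, and both cosets coincide as subsets of $\Gamma$ because $G_y=G_{\ol y}$.
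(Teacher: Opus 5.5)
Your proposal is correct and is essentially the paper's proof. The paper also shows that the coset $\gamma''G_{y''}$ separates $\delta$ from $\delta'$ in $\Cay(\Gamma,S)$: it takes the vertices $\gamma_0=\delta,\dots,\gamma_n=\delta'$ of a hypothetical avoiding path and notes via Fact~\ref{fact:tiling-tree} that $\bigcup_i\gamma_i\mathcal{T}$ is a connected subtree containing $\gamma G_v$ and $\gamma'G_{v'}$, hence containing the edge $\gamma''G_{y''}$, which forces some $\gamma_i\in\gamma''G_{y''}$; it then concludes with Lemmas~\ref{separation-to-Rseparation} and~\ref{restricting-separating-sets}. Your ``side'' invariant is the direct, non-contradiction phrasing of that same argument, and your orientation bookkeeping ($G_y=G_{\ol y}$ together with the disjoint-union description of $O_{\wtl X}$) is exactly what justifies the step the paper states as $\gamma_iG_{y''}=\gamma''G_{y''}$.
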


\begin{proof}
    We will first show that the set $\gamma''G_{y''}$ separates each pair $\delta,\delta'$ as above in the Cayley graph $\Cay(\Gamma,S)$, and then use Lemmas \ref{separation-to-Rseparation} and \ref{restricting-separating-sets} to show the $R$-separation in $(\Gamma,d_S)$.

    Assume on the contrary that $\gamma''G_{y''}$ does not separate $\delta$ from $\delta'$ in  $\Cay(\Gamma,S)$, for some $\delta,\delta'$ as above. It is immediate that $\delta, \delta'\not\in \gamma''G_{y''}$, because $d_S(\delta,\gamma''G_{y''}) > \frac{3R}{2} > 0$ and $d_S(\delta',\gamma''G_{y''}) > \frac{3R}{2} > 0$. Then there exist a path in $\Cay(\Gamma,S)$ joining $\delta$ with $\delta'$
    and omitting $\gamma''G_{y''}$. Let $\gamma_0=\delta, \gamma_1, \ldots, \gamma_n = \delta'$ be all vertices on such a path.
    
    We will now consider the union $\mathcal{U} = \bigcup_{i=0}^n \gamma_i\mathcal{T}$, where $\mathcal{T}$ is the subtree of $\wtl{X}(\mathcal{G},Y,T)$ defined in Fact \ref{fact:tiling-tree}. From the same fact we know that any two trees $\gamma_i\mathcal{T}$ and $\gamma_{i+1}\mathcal{T}$ have non-empty intersection, therefore $\mathcal{U}$ is connected. Moreover $\gamma G_v, \gamma'G_{v'}\in \mathcal{U}$, therefore $\gamma''G_{y''} \in \mathcal{U}$. From that we can conclude that $\gamma_iG_{y''} = \gamma''G_{y''}$, thus $\gamma_i\in \gamma''G_{y''}$. The contradiction ends this part of the proof.

    Now, by Lemma \ref{separation-to-Rseparation}, we know that $N_{R/2}(\gamma''G_{y''})$ $R$-separates in $\Cay(\Gamma,S)$
    any $\delta$ and $\delta'$
    as in the statement above. Moreover, by Lemma \ref{restricting-separating-sets}, we can restrict the space $\Cay(\Gamma,S)$ to $(\Gamma,d_S)$ and the set $N_{R/2}(\gamma''G_{y''})$ will still be $R$-separating $\delta$ from $\delta'$ in $(\Gamma,d_S)$. This ends the proof.
\end{proof}

\subsection{Separation by compact subsets in \texorpdfstring{$\mathbf{E}\mathbfcal{Z}$}{EZ}-structures} \label{Subsection:separation_in_EZ}

In this subsection we make a record of various useful observations  related to separation in $E\mathcal{Z}$-structures $(\ol E,Z)$ by compact subsets $K\subseteq E=\ol E\setminus Z$. We start with a series of observations
relating separation in $\ol E$ and $Z$ with that in $E$.\medskip

\begin{lm}\label{one-component}
Given an $E\mathcal{Z}$-structure $(\ol E,Z)$ and a compact subset 
$K\subseteq E$, each point $z\in Z$
belongs to the closure in $\ol E$ of precisely one connected component
of $E\setminus K$.
\end{lm}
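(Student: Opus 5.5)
We have to prove two things about a point $z\in Z$: it lies in the closure of \emph{at least one} component of $E\setminus K$, and of \emph{at most one}. The plan is to use only the topology of $\ol E$: it is compact, metrizable, connected and locally connected, and $Z$ is closed with empty interior. The last fact follows from the Z-set condition, since any point of $\ol E$ is a limit of points $h_t(x)\notin Z$.

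\textbf{Existence.} Since $K$ is compact and $z\notin K$, choose an open neighbourhood $U$ of $z$ in $\ol E$ with $\ol U\cap K=\emptyset$.
\begin{itemize}
\item Because $Z$ has empty interior, every neighbourhood of $z$ meets $E$, and hence meets $E\setminus K$.
\item We want a single component of $E\setminus K$ accumulating at $z$. First reduce to finitely many candidate components. Use local connectedness of $\ol E$ to pick a connected open $V$ with $z\in V\subseteq U$.
\item The set $V\cap E$ is $V$ minus a closed set with empty interior, and it is not obviously connected. This is where the Z-set deformation is needed: by Remark~\ref{rk:Z-set}(2), $\ol E\setminus K$ deformation retracts (in the weak sense) into $E\setminus K$. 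In particular, $E\setminus K\hookrightarrow \ol E\setminus K$ induces a bijection on path components, by Remark~\ref{rk:Z-set}(3).
\end{itemize}
So each component $C$ of $E\setminus K$ sits inside a unique component $\hat C$ of $\ol E\setminus K$. Since $E\setminus K$ is open in the locally connected space $\ol E$, its components are open and path-connected, so components equal path components on both sides.

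\textbf{Key claim: $\hat C = \overline{C}\setminus K$.} Because $\ol E\setminus K$ is open in a locally connected space, its components are open. Any point $z\in Z\setminus K$ lies in exactly one component $\hat C$ of $\ol E\setminus K$. That component is an open set containing $z$, and it meets $E$ only in points of $C$ (components of $E\setminus K$ inside $\hat C$ correspond bijectively). Hence $z\in\overline{(\hat C\cap E)}=\overline C$, using density of $E$ in the open set $\hat C$. This gives existence.

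\textbf{Uniqueness.} If $z\in\overline{C'}$ for another component $C'$, then since $\hat C$ is an open neighbourhood of $z$ it meets $C'$. Then $C'\subseteq\hat C$, since $C'$ is connected in $\ol E\setminus K$. But the bijection of components says $\hat C\cap E\setminus K$ contains only $C$, so $C'=C$.

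\textbf{Main obstacle.} The delicate step is justifying that the inclusion $E\setminus K\to\ol E\setminus K$ is injective on path components. Two points of $E\setminus K$ joined by a path $p$ in $\ol E\setminus K$ should be joined by a path avoiding $Z$. To get this, apply the deformation $h_t$ of Remark~\ref{rk:Z-set}(2) for small $t>0$ and concatenate:
\begin{enumerate}
\item the track $s\mapsto h_s(x_0)$ for $s\in[0,t]$;
\item the path $h_t\circ p$;
\item the reversed track of $x_1$.
\end{enumerate}
The endpoints $x_0,x_1$ lie in $E$, but the tracks $h_s(x_i)$ could pass through $Z$ at intermediate $s$. Since $h_s(\ol E)\cap Z=\emptyset$ for $s>0$, only $s=0$ matters, and $h_0(x_i)=x_i\in E$. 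So the tracks avoid $Z$, and the concatenation is a path in $E\setminus K$.
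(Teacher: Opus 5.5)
Your proof is correct, but it is organised differently from the paper's. The paper gets existence straight from the deformation of Remark~\ref{rk:Z-set}(2). The track $\{h_t(z):0<t\le 1\}$ is a connected subset of $E\setminus K$, hence lies in one component $C$, and $h_t(z)\to z$ gives $z\in\ol C$. Uniqueness follows because $C\cup C'\cup\{z\}$ would be connected, putting $C$ and $C'$ into one component of $\ol E\setminus K$, against Remark~\ref{rk:Z-set}(3).

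You instead start from the component $\hat C$ of $\ol E\setminus K$ containing $z$. Local connectedness of $\ol E$ makes $\hat C$ open, and density of $E$ together with the bijection on components gives $\hat C\cap E=C$, hence $z\in\ol C$. Uniqueness then holds because the open set $\hat C$ must meet any $C'$ accumulating at $z$.

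The paper's argument is lighter for this lemma, since it needs neither local connectedness nor openness of components. Yours, in effect, also establishes $\hat C=\ol C\setminus K=C\cup\Lambda C$ at the same time, i.e.\ part~1 of Lemma~\ref{form-of-component}.

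Two small points. First, the neighbourhoods $U,V$ in your existence paragraph are never used. Second, ``components are path components'' is not a consequence of local connectedness alone. It holds here because $\ol E$ is a Peano continuum (Mazurkiewicz--Moore--Menger), and you should cite that. You can also bypass it by running your concatenation with connected sets instead of paths: for $x_0,x_1\in\hat C\cap E$, the set $\{h_s(x_0):0\le s\le t\}\cup h_t(\hat C)\cup\{h_s(x_1):0\le s\le t\}$ is connected and contained in $E\setminus K$. This gives injectivity directly on connected components, a point that the paper's appeal to the homotopy equivalence also passes over silently.
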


\begin{proof}
Let $h_t:\ol E\setminus K\to\ol E\setminus K$ be a deformation as in
Remark \ref{rk:Z-set}(2). Then for a fixed $z\in Z$ the points $h_t(z)$ with $t>0$
are all in the same connected component of $E\setminus K$, say $C$,
and so $z$ belongs to the closure of $C$. If $C'$ were another connected
component of $E\setminus K$ containing $z$ in its closure, 
we would have that $C\cup C'\cup\{ z \}$ is connected. 
But this would mean that the inclusion $i:E\setminus K\to\ol E\setminus K$
maps components $C$ and $C'$ to a single connected component
of $\ol E\setminus K$, contradicting the fact that $i$ is a homotopy
equivalence (compare Remark \ref{rk:Z-set}(3)). 
\end{proof}

\begin{lm}\label{form-of-component}
Let $(\ol E,Z)$ be an $E\mathcal{Z}$-structure and let $K$ be a compact
subset of $E=\ol E\setminus Z$. Then for any connected component $C$
of $E\setminus K$:
\begin{enumerate}
\item
$C\cup\Lambda C$ is a connected component of $\ol E\setminus K$,
and each connected component of $\ol E\setminus K$ has this form;
\item
$C\cup\Lambda C$ (and thus any connected component of 
$\ol E\setminus K$) is open and closed in $\ol E\setminus K$;
\item
the limit set $\Lambda C$ is open and closed in $Z$.
\end{enumerate}
\end{lm}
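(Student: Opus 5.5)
The plan is to derive all three items from local connectedness of $E$ together with Lemma \ref{one-component}. Since $E$ is locally connected (Remark \ref{rk:Z-set}) and $E\setminus K$ is open in $E$, every connected component $C$ of $E\setminus K$ is open in $E$, hence open in $\ol E$. I will also use that $E$ is dense in $\ol E$, which follows from the Z-set deformation: $z=\lim_{t\to0}h_t(z)$ with $h_t(z)\in E$.

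\textbf{Item 1.} First, $C\cup\Lambda C$ is connected, because $C\subseteq C\cup\Lambda C\subseteq\ol C$ and $C$ is connected. It is also contained in $\ol E\setminus K$. Let $D$ be the component of $\ol E\setminus K$ containing it.

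I will show $D\subseteq C\cup\Lambda C$, which gives equality. Take a point $p\in D$. If $p\in E$, then $p$ lies in some component $C'$ of $E\setminus K$. If $p\in Z$, then by Lemma \ref{one-component} it lies in $\ol{C'}$ for exactly one component $C'$, and in that case $p\in\Lambda C'$. It therefore suffices to show that distinct components $C\ne C'$ give disjoint sets $C\cup\Lambda C$ and $C'\cup\Lambda C'$, with no connection between them inside $\ol E\setminus K$.

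Disjointness of $C$ and $C'$ is clear. Disjointness of $\Lambda C$ and $\Lambda C'$ is exactly Lemma \ref{one-component}. Finally, $C\cap\ol{C'}=\emptyset$ because $C$ is open and disjoint from $C'$.

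This shows that $\ol E\setminus K$ is the disjoint union of the sets $C\cup\Lambda C$. Each such set is connected, so the sets are exactly the components, provided each one is clopen in $\ol E\setminus K$. That is item 2.

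\textbf{Item 2.} For closedness, compute the relative closure: $\ol{C}\cap(\ol E\setminus K)=C\cup(\ol C\cap(E\setminus K))\cup\Lambda C$. A point of $E\setminus K$ lying in $\ol C$ lies in some component $C'$, which is open. Hence $C'$ meets $C$, so $C'=C$. Therefore $C\cup\Lambda C$ equals its closure in $\ol E\setminus K$.

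Openness is the main obstacle. Near a point $z\in\Lambda C$ one must exclude points of other components and their limit sets. My plan is to argue that the complement of $C\cup\Lambda C$ in $\ol E\setminus K$ is closed. Suppose $p_n\to z\in\Lambda C$ with $p_n\in C'_n\cup\Lambda C'_n$ and $C'_n\ne C$. By density of $E$ we may replace $p_n$ by points $q_n\in C'_n$ with $q_n\to z$.

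\begin{itemize}
\item If infinitely many $C'_n$ coincide with a single $C'$, then $z\in\Lambda C'$, which contradicts Lemma \ref{one-component}.
\item So we may assume the $C'_n$ are pairwise distinct. I expect to handle this case with the Z-set deformation of Remark \ref{rk:Z-set}(2). The path $t\mapsto h_t(z)$ for small $t$ gives a point $e=h_{t_0}(z)\in C$, and we join it to $z$ by a connected set $A=h_{[0,t_0]}(z)$ avoiding $K$. By uniform continuity of $h$ on the compact set $\ol E\setminus U$, where $U$ is an open neighbourhood of $K$ disjoint from $z$, the sets $h_{[0,t_0]}(q_n)$ are connected, avoid $K$, lie in $E$ for $t>0$, and end near $e$, inside $C$ since $C$ is open. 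Then $q_n$ is joined to $C$ within $\ol E\setminus K$, which forces $q_n\in C$ (using item 1's partition on $E$: a connected set in $E\setminus K$ stays in one component, and the path for $t\in(0,t_0]$ lies in $E\setminus K$ and is connected up to $q_n$). This is a contradiction.
\end{itemize}

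The same deformation argument in fact handles both cases at once. Care is needed to ensure the homotopy keeps $q_n$'s track away from $K$; this is where the choice of $h_t$ as a deformation of $\ol E\setminus K$ itself, from Remark \ref{rk:Z-set}(2), is essential.

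\textbf{Item 3.} Intersect with $Z$. We have $\Lambda C=(C\cup\Lambda C)\cap Z$, and $Z\subseteq\ol E\setminus K$. So $\Lambda C$ is clopen in $Z$ by item 2.
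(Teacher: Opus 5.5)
Your proof is correct and rests on the same mechanism as the paper's: openness of $C\cup\Lambda C$ comes from pulling back the open set $C$ along $h_{t_0}$, for the deformation of $\ol E\setminus K$ from Remark~\ref{rk:Z-set}(2). The paper packages this as the identity $C\cup\Lambda C=h_t^{-1}(C)$ for $t>0$, which makes your sequence and case analysis unnecessary; likewise, continuity of $h_{t_0}$ at $z$ alone replaces your uniform-continuity argument. The only real difference is that you get ``every component of $\ol E\setminus K$ has this form'' from the partition supplied by Lemma~\ref{one-component} together with clopenness, whereas the paper uses that the inclusion $E\setminus K\to\ol E\setminus K$ is a homotopy equivalence (Remark~\ref{rk:Z-set}(3)); both routes work.
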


\begin{proof}
To prove part 1, note first that $C\cup\Lambda C$ is obviously closed in $\ol E\setminus K$
and connected. To get the first assertion of part 1, it is then sufficient
to show that $C\cup\Lambda C$ is also open in $\ol E\setminus K$.
Let $h_t:\ol E\setminus K\to\ol E\setminus K$ be a deformation as
in Remark \ref{rk:Z-set}(2). It follows then from Lemma \ref{one-component} 
that if $t>0$ then 
$h_t(C\cup\Lambda C)\subseteq C$, and hence 
$C\cup\Lambda C=h_t^{-1}(C)$.
Since $C$ is open in $E\setminus K$ (by local connectedness of $E$),
it follows that $C\cup\Lambda C$ is indeed open in $\ol E\setminus K$,
and so the first assertion of part 1 follows. To see the second assertion
of part 1, let $D$ be a connected component of $\ol E\setminus K$.
Since the inclusion map $i:E\setminus K\to\ol E\setminus K$
is a homotopy equivalence (see Remark \ref{rk:Z-set}(3)),
the preimage $i^{-1}(D)=D\cap E$ must be connected,
and actually it must be a connected component of $E\setminus K$
since its complement in  $E\setminus K$ coincides with the
preimage by $i$ of the complement of $D$ in $\ol E\setminus K$.
Put $C=D\cap E$. We will show that $D=C\cup\Lambda C$.
The inclusion $C\cup\Lambda C\subseteq D$ follows by connectedness
of $C\cup\Lambda C$. To see the converse inclusion,
it is sufficient to show that each point of $D\cap Z$ belongs to 
$\Lambda C$. Let $z\in D\cap Z$. Then $h_t(z)\in D\cap E=C$ for
$t>0$ (by connectedness of $D$), and hence $z=h_0(z)$ is
in the closure of $C$, which completes the proof of part 1.
Part 2 follows directly from part 1 and its proof above.
Part 3 is an immmediate consequence of part 2.
\end{proof}

Lemma \ref{form-of-component} has the following three useful 
corollaries.\medskip

\begin{cor}\label{limit-clopen}
Let $(\ol E,Z)$ be an $E\mathcal{Z}$-structure, and let $K\subseteq E$
be a compact set. If $D$ is a connected component of $\ol E\setminus K$
then the intersection $D\cap Z$ is a closed and open subset of $Z$.
\end{cor}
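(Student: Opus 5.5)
This is a direct consequence of Lemma \ref{form-of-component}. Let $D$ be a connected component of $\ol E\setminus K$. By part 1 of that lemma there is a connected component $C$ of $E\setminus K$ with $D=C\cup\Lambda C$. Since $C\subseteq E$, $\Lambda C\subseteq Z$ and $E\cap Z=\emptyset$, we get $D\cap Z=\Lambda C$. Part 3 of Lemma \ref{form-of-component} then says that $D\cap Z$ is clopen in $Z$.

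For a proof that avoids the identification $D=C\cup\Lambda C$, I would use part 2 together with the subspace topology. Part 2 gives that $D$ is open and closed in $\ol E\setminus K$. Because $K\subseteq E$ is disjoint from $Z$, we have $Z\subseteq\ol E\setminus K$. Intersecting a clopen subset of $\ol E\setminus K$ with the subspace $Z$ gives a clopen subset of $Z$, so $D\cap Z$ is clopen in $Z$.

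No step here is a real obstacle. The only point needing care is that $Z$ lies entirely inside $\ol E\setminus K$, which holds precisely because $K$ is taken inside $E$; this is what allows the relative topologies to be compared.
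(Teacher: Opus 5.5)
Your proof is correct, and your first paragraph is exactly the paper's argument: Lemma \ref{form-of-component}.1 gives $D=C\cup\Lambda C$, hence $D\cap Z=\Lambda C$, and Lemma \ref{form-of-component}.3 shows this set is clopen in $Z$. Your alternative via part 2 is also valid: since $K\subseteq E$, we have $Z\subseteq\ol E\setminus K$, so $D\cap Z$ is clopen in $Z$. This is essentially the reasoning the paper compresses into ``Part 3 is an immediate consequence of part 2.''
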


\begin{proof}
By Lemma \ref{form-of-component}.1, 
$D=C\cup\Lambda C$ for the connected component $C$
of $E\setminus K$ coinciding with the intersection $D\cap E$. Since this
means that $D\cap Z=\Lambda C$, proposition follows by Lemma 
\ref{form-of-component}.3. 
\end{proof}

\begin{cor}\label{separation-of-two-limits}
Let $(\ol E,Z)$ be an $E\mathcal{Z}$-structure, and let 
$(e_n)$ and $(e_n')$ be two sequences of points of $E$
converging in $\ol E$ to points $z,z'\in Z$, respectively. 
Suppose that a compact subset $K\subseteq E$ separates in $E$
any point $e_i$ from any point $e_j'$. Then $K$ separates $z$
from $z'$ in $\ol E$. In particular, $z$ and $z'$ are then distinct,
and if $D$ is a connected component of $\ol E\setminus K$
containing $z$ then 
$z'\notin D\cap Z$. 
\end{cor}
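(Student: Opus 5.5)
The argument rests on Lemma \ref{form-of-component} and on the fact that the connected components of $\ol E\setminus K$ are open. Let $D$ be the connected component of $\ol E\setminus K$ containing $z$ (note $z\notin K$, since $K\subseteq E$). By Lemma \ref{form-of-component}.1 we have $D=C\cup\Lambda C$ for the component $C=D\cap E$ of $E\setminus K$. By Lemma \ref{form-of-component}.2, $D$ is open in $\ol E\setminus K$. Since $K$ is compact, it is closed in $\ol E$, so $\ol E\setminus K$ is open in $\ol E$, and therefore $D$ is an open neighbourhood of $z$ in $\ol E$.

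As $e_n\to z$, almost all $e_n$ lie in $D$, hence in $C$. Fix one such index $i$. Similarly, let $D'$ be the component of $\ol E\setminus K$ containing $z'$, with $D'\cap E=C'$; almost all $e_j'$ lie in $C'$. Fix one such index $j$.

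By hypothesis, $K$ separates $e_i$ from $e_j'$ in $E$, so $e_i$ and $e_j'$ lie in different components of $E\setminus K$. Thus $C\neq C'$, and consequently $D\neq D'$. Components are disjoint, so $z$ and $z'$ lie in different components of $\ol E\setminus K$; that is, $K$ separates $z$ from $z'$ in $\ol E$.

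It follows immediately that $z\ne z'$, and that $z'\notin D$, in particular $z'\notin D\cap Z$. The only point needing care is that $D$ is a neighbourhood of $z$ in all of $\ol E$ and not merely in $\ol E\setminus K$. This is guaranteed by the closedness of the compact set $K$.
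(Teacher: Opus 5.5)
Your proof is correct and follows essentially the same route as the paper: both use Lemma~\ref{form-of-component}(1)--(2) to write the component $D\ni z$ as $C\cup\Lambda C$, and use openness of $D$ to place a tail of $(e_n)$ in $C$. The only difference is cosmetic. You also take the component $D'\ni z'$ and show $D\cap E\neq D'\cap E$. The paper instead notes that all $e_n'$ lie in $(\ol E\setminus K)\setminus D$, which is closed in $\ol E\setminus K$, so $z'$ lies there too.
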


\begin{proof}
Let $D$ be the connected component of $\ol E\setminus K$ which
contains $z$. By Lemma \ref{form-of-component}.2, 
$D$ is open in $\ol E\setminus K$,
so almost all points from the sequence $(e_n)$ belong to $D$,
and hence to $C=D\cap E$. Since $C$ is a connected component
of $E\setminus K$ (see Lemma \ref{form-of-component}.1), 
all points from the sequence
$(e_n')$ belong to $(E\setminus K)\setminus C$, because $K$ separates them 
in $E$ from all points $e_n$. Furthermore, since $D$ is open in 
$\ol E\setminus K$, it follows that $(\ol E\setminus K)\setminus D$
is closed in $\ol E\setminus K$, and since
$(E\setminus K)\setminus C\subseteq(\ol E\setminus K)\setminus D$,
we get that $z'=\lim e_n'$ belongs to $(\ol E\setminus K)\setminus D$.
This however means that $K$ separates $z'$ from $z$ in $\ol E$,
as required.
\end{proof}

\begin{cor}\label{separation-of-limit}
Let $(\ol E,Z)$ be an $E\mathcal{Z}$-structure, and let 
$(e_n)$ be a sequence of points of $E$
converging in $\ol E$ to a point $z\in Z$. 
Suppose that a compact subset $K\subseteq E$ separates in $E$
any point $e_i$ from some other fixed in advance point $e_0\in E$. Then $K$ separates $z$
from $e_0$ in $\ol E$.
\end{cor}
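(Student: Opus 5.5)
We argue exactly as in the proof of Corollary \ref{separation-of-two-limits}, with the constant point $e_0$ playing the role of the second sequence. First we check that $e_0\notin K$: this is implicit in the hypothesis, since "$K$ separates $e_i$ from $e_0$" means that $e_0$ lies in some connected component of $E\setminus K$. Let $C_0$ denote this component.

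Next we apply Lemma \ref{form-of-component}.1 to $C_0$. It gives that $D_0\coloneqq C_0\cup\Lambda C_0$ is a connected component of $\ol E\setminus K$, and this component contains $e_0$. By Lemma \ref{form-of-component}.2, $D_0$ is both open and closed in $\ol E\setminus K$. By hypothesis every $e_i$ lies in $E\setminus K$ but outside $C_0$. Since $D_0\cap E=C_0$, we get $e_i\in(\ol E\setminus K)\setminus D_0$ for all $i$.

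Now we pass to the limit. The set $(\ol E\setminus K)\setminus D_0$ is closed in $\ol E\setminus K$, and it contains every $e_i$. Since $K\subseteq E$ and $z\in Z$, the limit $z$ lies in $\ol E\setminus K$. Therefore $z\in(\ol E\setminus K)\setminus D_0$.

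It follows that $z$ and $e_0$ lie in different connected components of $\ol E\setminus K$. One of these components is $D_0$, which contains $e_0$ but not $z$. Hence $K$ separates $z$ from $e_0$ in $\ol E$.

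No step here is a real obstacle. The only points needing care are the observation that $e_0\notin K$ and the use of the clopenness of components (Lemma \ref{form-of-component}.2) in place of mere closedness of $D_0$; this clopenness is what lets us pass to the limit along $(e_i)$.
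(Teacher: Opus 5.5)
Your proof is correct and is essentially the paper's argument. Both rest on Lemma \ref{form-of-component}: components of $\ol E\setminus K$ are clopen in $\ol E\setminus K$, and they meet $E$ in components of $E\setminus K$. The paper takes the component $D$ containing $z$ and uses its openness to place almost all $e_n$ in $D\cap E$, which forces $e_0\notin D$; you instead take the component of $e_0$ and use the closedness of its complement to push $z$ out of it, a trivial dual of the same argument.
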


\begin{proof}
Let $D$ be the connected component of $\ol E\setminus K$ which
contains $z$. By Lemma \ref{form-of-component}.2, 
$D$ is open in $\ol E\setminus K$,
so almost all points from the sequence $(e_n)$ belong to $D$,
and hence to $C=D\cap E$. Since $C$ is a connected component
of $E\setminus K$ (see Lemma \ref{form-of-component}.1), 
it follows that 
$e_0\in(E\setminus K)\setminus C\subseteq(\ol E\setminus K)\setminus D$.
This however means that $K$ separates $e_0$ from $z$ in $\ol E$,
as required.
\end{proof}

Next few observations will be also useful in our further arguments.\medskip

\begin{lm}\label{lemma:separated_convergence_2}
    Let $(\ol{E},Z)$ be an $E\mathcal{Z}$-structure for a group $\Gamma$,
and let $\gamma_n$ be a sequence of elements of $\Gamma$ converging
to a point $z\in Z$ (which means that for some, and hence also for any
$e\in E$ we have $\lim\gamma_ne=z$).
Let $e_m$ be a sequence of points of $\ol E$ satisfying the following: 
there exists a compact subset $K\subseteq E$ and a sequence $n_k$
of natural numbers converging to infinity such that for each $k$
the set $\gamma_{n_k}K$ separates (in $\ol E$) the element $e_k$
from almost all elements $e_m$. Then $\lim e_m=z$.
\end{lm}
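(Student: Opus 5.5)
We argue by compactness of $\ol E$. It suffices to show that every convergent subsequence of $(e_m)$ has limit $z$. So suppose a subsequence $e_{m_j}$ converges to some $w\in\ol E$; we show $w=z$.

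\textbf{Step 1: the sets $\gamma_{n_k}K$ shrink to $z$.} Fix a metric $d$ on $\ol E$. By condition (4) of Definition \ref{def:EZ-structures}, $\diam(\gamma_{n_k}K)\to 0$. Since $\gamma_{n_k}e\to z$ for a fixed $e\in K$, we get $\gamma_{n_k}K\to z$ in the Hausdorff sense. In particular, for any neighbourhood $U$ of $z$, almost all sets $\gamma_{n_k}K$ lie in $U$.

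\textbf{Step 2: $e_k$ and $w$ lie in different components.} Fix $k$ and let $D_k$ be the component of $\ol E\setminus\gamma_{n_k}K$ containing $e_k$. By hypothesis, almost all $e_{m_j}$ lie in $\ol E\setminus\gamma_{n_k}K$ but outside $D_k$. By Lemma \ref{form-of-component}.2, applied to the compact set $\gamma_{n_k}K\subseteq E$, the set $D_k$ is clopen in $\ol E\setminus\gamma_{n_k}K$. Hence the closure of $D_k$ in $\ol E$ is contained in $D_k\cup\gamma_{n_k}K$. So either $w\in\gamma_{n_k}K$, or $w$ lies in the closed set $\ol E\setminus D_k$ of the complement while the $e_{m_j}$ avoid $D_k$. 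In both cases $w\notin D_k$. Thus for every $k$, either $w\in\gamma_{n_k}K$ or $\gamma_{n_k}K$ separates $e_k$ from $w$.

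\textbf{Step 3: conclude $w=z$ by contradiction.} Suppose $w\neq z$ and choose disjoint small closed balls $B_z\ni z$ and $B_w\ni w$. By Step 1, for large $k$ we have $\gamma_{n_k}K\subseteq B_z$, so $w\notin\gamma_{n_k}K$.

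The ball $B_w$ is not automatically connected, so we use local connectedness of $\ol E$ to fix a connected neighbourhood $V$ of $w$ inside $B_w$. Then for large $k$, $V$ misses $\gamma_{n_k}K$, hence $V$ lies in a single component of $\ol E\setminus\gamma_{n_k}K$, namely the one containing $w$. That component is not $D_k$. Since almost all $e_{m_j}$ lie in $V$, we get $e_{m_j}\notin D_k$, which is consistent with the hypothesis. So Step 2 alone gives no contradiction.

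\textbf{The main obstacle} is therefore to show that the component of $\ol E\setminus\gamma_{n_k}K$ not containing $e_k$ but containing $w$ must be \emph{small}, that is, close to $z$. The symmetric use of the hypothesis is: as $k$ varies, $e_k$ itself runs through the sequence. Take $k=m_j$ large. Then $e_{m_j}$ is close to $w$, and both $e_{m_j}$ and $w$ lie in the connected set $V$, which misses $\gamma_{n_{m_j}}K\subseteq B_z$. Hence $e_{m_j}$ and $w$ lie in the same component $D_{m_j}$, contradicting Step 2.

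This needs $n_{m_j}\to\infty$, which holds because $n_k\to\infty$, so that $\gamma_{n_{m_j}}K\subseteq B_z$. It also needs $e_{m_j}\in V$ for large $j$, which holds because $e_{m_j}\to w$. The contradiction shows $w=z$, so every convergent subsequence of $(e_m)$ tends to $z$, and by compactness of $\ol E$ we conclude $\lim e_m=z$.

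The only subtle point is using local connectedness of $\ol E$, from condition (1) of Definition \ref{def:EZ-structures}, to obtain the connected neighbourhood $V$.
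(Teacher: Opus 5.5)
Your proof is correct and is essentially the paper's argument. You pass to a subsequence $e_{m_j}\to w\neq z$, use local connectedness of $\ol E$ to get a connected neighbourhood of $w$ that misses $\gamma_{n_k}K$ for all large $k$ (these translates shrink to $z$ by nullness), and apply the separation hypothesis with $k=m_j$, which the paper leaves implicit and you make explicit. Your Step 2, which transfers the separation to the limit point $w$, is a harmless detour: $e_{m_j}$ and almost all $e_{m_{j'}}$ already lie in that same connected neighbourhood, and this contradicts the hypothesis directly.
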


\begin{proof}
Suppose a contrario that $e_m$ does not converge to $z$.
By compactness of $\ol E$, there is a subsequence $e_{m_i}$
converging to some $e_0\in\ol E\setminus\{ z \}$.
Let $U$ be an open and connected neighbourhood of $e_0$ in $\ol E$
not containing $z$ in its closure (such $U$ exists due to local
connectedness of $\ol E$). 
Then for all sufficiently large $k$ the sets $\gamma_{n_k}K$
are disjoint with $U$, and hence $U$ is contained in a single connected
component of $\ol E\setminus\gamma_{n_k}K$. Since almost all $e_{m_i}$
are contained in $U$ (because $e_{m_i}\to e_0\in U$),
this contradicts the final assumption of the lemma,
thus completing the proof.
\end{proof}

\begin{lm}\label{translates-in-single-component}
Let $(\ol{E},Z)$ be an $E\mathcal{Z}$-structure for a group $\Gamma$, and
let $K,L$ be any compact subsets of $E$. Then for almost all 
$\gamma\in\Gamma$
the translate $\gamma L$ is disjoint with $K$ and contained in a single connected component of $E\setminus K$. As a consequence, 
for almost all $\gamma\in\Gamma$ the set $L$ is contained in a single
connected component of $E\setminus \gamma K$. 
\end{lm}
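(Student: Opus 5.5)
The first assertion will follow from the null property (4) of Definition \ref{def:EZ-structures}, local connectedness of $\ol E$, and the structure of components from Lemma \ref{form-of-component}. The second will follow by translating by $\gamma^{-1}$.

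\textbf{Disjointness.} By properness of the action, only finitely many $\gamma$ satisfy $\gamma L\cap K\neq\emptyset$. So for almost all $\gamma$ the translate $\gamma L$ is disjoint from $K$.

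\textbf{A uniform $\epsilon$ near $Z$.} Fix a metric $d$ on $\ol E$. For each $z\in Z$, local connectedness of $\ol E$ gives a connected open neighbourhood $U_z$ of $z$ with $U_z\cap K=\emptyset$; here we use that $K$ is compact and disjoint from $Z$, so $d(z,K)>0$. The sets $U_z$ cover the compact set $Z$, so the Lebesgue number lemma gives $\epsilon>0$ such that every subset of $\ol E$ of diameter $<\epsilon$ that lies within distance $\epsilon$ of $Z$ is contained in some $U_z$. Shrinking $\epsilon$, we may assume $\epsilon<d(K,Z)$.

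\textbf{Translates of $L$ eventually satisfy this.} By the null property (4), $\diam(\gamma L)<\epsilon$ for almost all $\gamma$. We also need $\gamma L$ to be close to $Z$ for almost all $\gamma$. Suppose instead that infinitely many distinct $\gamma_n$ have points $\gamma_n l_n$ at distance $\ge\epsilon$ from $Z$. These points then lie in the compact set $\{x\in\ol E: d(x,Z)\ge\epsilon\}\subseteq E$. This contradicts properness, since only finitely many translates of the compact set $L$ can meet a given compact subset of $E$.

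Hence for almost all $\gamma$, the set $\gamma L$ lies in some connected $U_z$ with $U_z\cap K=\emptyset$. Therefore $\gamma L$ lies in a single connected component $D$ of $\ol E\setminus K$. By Lemma \ref{form-of-component}.1, $D\cap E$ is a connected component of $E\setminus K$, and it contains $\gamma L$ because $\gamma L\subseteq E$. This proves the first assertion.

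\textbf{The consequence.} Apply the first part with the roles of $K$ and $L$ swapped. Then for almost all $\gamma$ the translate $\gamma^{-1}L$ lies in a single component of $E\setminus K$. Since $\gamma$ acts on $E$ by homeomorphisms, $L$ lies in a single component of $E\setminus\gamma K$. Because $\gamma\mapsto\gamma^{-1}$ is a bijection of $\Gamma$, "almost all $\gamma^{-1}$" is the same as "almost all $\gamma$", so the statement holds for almost all $\gamma$.

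\textbf{Main obstacle.} The delicate step is the uniform choice of $\epsilon$. It requires neighbourhoods that are connected, avoid $K$, and have a Lebesgue number that works simultaneously near all of $Z$. Compactness of $Z$ together with local connectedness of $\ol E$ handles this.
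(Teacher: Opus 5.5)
Your proof is correct, but it takes a genuinely different route from the paper's.

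The paper argues entirely inside $E$. It first shows, using connectedness, local connectedness and local compactness of $E$, that any compact $L$ is contained in a compact connected $L'\subseteq E$. Properness alone then gives $\gamma L'\cap K=\emptyset$ for almost all $\gamma$, so the connected set $\gamma L'\supseteq\gamma L$ lies in one component of $E\setminus K$. That argument uses none of the boundary axioms: neither the null condition (4) nor the $Z$-set property. It therefore works for any proper action on a connected, locally connected, locally compact space.

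You instead work from the boundary side. The null condition and properness force $\gamma L$ to be small and close to $Z$. Your Lebesgue-number argument around the compact set $Z$ then places $\gamma L$ inside a connected open subset of $\ol E\setminus K$. Finally, Lemma \ref{form-of-component}.1 turns the component of $\ol E\setminus K$ into a single component of $E\setminus K$; that lemma rests on the $Z$-set property through the homotopy equivalence $E\setminus K\hookrightarrow\ol E\setminus K$. This last step is genuinely needed, because $U_z\cap E$ need not be connected, and you handle it correctly.

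Your route avoids constructing connected hulls. It also gives a more uniform conclusion: every sufficiently small subset of $E$ that is sufficiently close to $Z$ lies in one component of $E\setminus K$. The cost is that it uses considerably more of the $E\mathcal{Z}$ machinery than the paper's elementary argument.

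One small slip: in the final step you should not swap the roles of $K$ and $L$. You apply the first assertion, with the same $K$ and $L$, to the elements $\gamma^{-1}$. That is exactly what your next sentence does, so this is only a wording error.
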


\begin{proof}
To prove the lemma, we need the following two preparatory observations.

{\bf Claim 1.}
{\it For any two points $p,q$ of $E$ there is a compact connected subset of $E$
containing these two points.}

To prove the claim, 
consider the union $P$ of all compact connected subsets of $E$ 
containing $p$.
By local connectedness and local compactness of $E$
(which follow from the fact that $E$ is an open subset of $\ol E$), 
$P$ is both open
and closed in $E$. Then, by connectedness of $E$, we get $P=E$,
hence the claim. 

{\bf Claim 2.}
{\it For any compact set $K\subseteq E$ there is a compact connected
subset $K'\subseteq E$ containing $K$.}

To prove Claim 2, note that
by local connectedness and local compactness of $E$, we can cover $K$
by a finite collection $\mathcal{N}$ of compact connected sets. Then,
by referring to Claim 1, for any two sets from
$\mathcal{N}$ choose a compact connected set which intersects them
both, getting another finite family $\mathcal{M}$ of compact connected sets.
The union $K'\coloneqq\bigcup(\mathcal{N}\cup\mathcal{M})$ is then obviously
compact and connected, and contains $K$.

We now pass to the essential part of the proof of Lemma 
\ref{translates-in-single-component}.
Let $L'$ be a compact connected subset of $E$ which contains $L$
(and which exists due to Claim 2.
By properness of the action of $\Gamma$ on $E$,
for almost all $\gamma\in\Gamma$ the translate $\gamma L'$ is disjoint
with $K$, and since it is connected, it must be contained in a single
connected component of $E\setminus K$. Our first assertion of the lemma 
holds then
for the same $\gamma\in\Gamma$ 
by the inclusions $\gamma L\subseteq \gamma L'$.
The second assertion of the lemma follows from the first one by 
$\Gamma$-invariance.
\end{proof}

\subsection{Separation in \texorpdfstring{$\mathbf{E}\mathbfcal{Z}$}{EZ}-structures for graphs of groups with finite edge groups} \label{Subsection:separation_in_EZ_for_graphs_of_groups}

In this subsection we make some preparatory observations concerning
separation by compact subsets in $E\mathcal{Z}$-structures for
fundamental groups of graphs of groups with finite edge groups.
More precisely, we relate separation properties of edges in the corresponding
Bass-Serre trees with separation properties of appropriate compact subsets
in the corresponding $E\mathcal{Z}$-structures.


\medskip

\begin{lm} \label{lemma:separating_set_in_EZ}
    Let $(\mathcal G,Y)$ be a graph of groups with finite edge groups,
    let $\Gamma$ be its fundamental group, and let $\wtl{X}$ be the corresponding
    Bass-Serre tree. Let $(\ol{E},Z)$ be an $E\mathcal{Z}$-structure for $\Gamma$, and
    let $e_0\in E$ be any fixed point. Then, there exist a compact set $K\subseteq E$ containing $e_0$ and a constant $R_0>0$ such that the following holds.
    Let $\gamma'' G_{y''}$ be any edge of $\wtl{X}$, and let
    $\wtl{X}_0, \wtl{X}_1$ denote the two connected components obtained by deleting 
    the interior of  ${\gamma''G_{y''}}$ from $\wtl{X}$.
    Finally, define the subsets $M,M'\subseteq\Gamma$ by
    $$M =\bigcup_{\gamma G_v \in V_{\wtl{X}_0}}\gamma G_v,$$
    $$M'=\bigcup_{\gamma' G_{v'} \in V_{\wtl{X}_1}}\gamma'G_{v'}.$$
    Then the set $\gamma'' K$ separates in $E$ any  point $\delta e_0$ with $\delta\in M\setminus N_{R_0}(\gamma''G_{y''})$ from any point $\delta'e_0$ with $\delta'\in M'\setminus N_{R_0}(\gamma''G_{y''})$.
    Consequently, the set $\gamma'' K$ separates in $E$ all but finitely many points $\delta e_0 \in Me_0$ from all but finitely many points $\delta'e_0\in M'e_0$.
\end{lm}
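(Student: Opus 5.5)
Fix a compact set $K_0\subseteq E$ with $\Gamma K_0=E$; such a set exists by cocompactness. We may assume $e_0\in K_0$, enlarging $K_0$ if necessary. Put
$$P=\set{\gamma\in\Gamma:\gamma K_0\cap K_0\ne\emptyset},$$
which is finite by properness. Choose $R>\diam(P)$ with respect to $d_S$. The idea is to combine Lemma \ref{separation-in-cayley-graph}, which gives $R$-separation in the group, with the $R$-separation Lemma \ref{corollary:R-separation_lemma}, which turns it into genuine separation in $E$.

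Let $\gamma''G_{y''}$ be an edge and let $M,M'$ be as in the statement. Take $\delta\in M$ and $\delta'\in M'$, so $\delta\in\gamma G_v$ with $\gamma G_v\in V_{\wtl{X}_0}$ and $\delta'\in\gamma'G_{v'}$ with $\gamma'G_{v'}\in V_{\wtl{X}_1}$. Since these two vertices lie in different components of $\wtl{X}$ minus the interior of the edge, the edge $\gamma''G_{y''}$ lies on the geodesic joining them. If moreover $\delta,\delta'\notin N_{3R/2}(\gamma''G_{y''})$, then Lemma \ref{separation-in-cayley-graph} shows that $I=N_{R/2}(\gamma''G_{y''})$ $R$-separates $\delta$ from $\delta'$. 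Since $\delta e_0\in\delta K_0$ and $\delta'e_0\in\delta'K_0$, Lemma \ref{corollary:R-separation_lemma} then shows that $IK_0$ separates $\delta e_0$ from $\delta'e_0$ in $E$.

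It remains to write $IK_0$ as $\gamma''K$ with $K$ independent of the edge. We have
$$I=\gamma''\cdot N_{R/2}(G_{y''}),$$
because left multiplication is an isometry of $d_S$. The set $N_{R/2}(G_{y''})$ is finite, since $G_{y''}$ is finite; note that this is where finiteness of edge groups is used. Since $Y$ has finitely many edges, we may define
$$K=\{e_0\}\cup\bigcup_{y\in O_Y}N_{R/2}(G_y)K_0,$$
which is compact and contains $e_0$. Then $IK_0\subseteq\gamma''K$, so $\gamma''K$ contains a separating set. To conclude that $\gamma''K$ itself separates, it also has to avoid the points being separated.

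This is the main obstacle: a set that contains a separating set separates only if it misses the two points. To handle it, choose $R_0$ large enough that $\delta\notin N_{R_0}(\gamma''G_{y''})$ forces both $\delta\notin N_{3R/2}(\gamma''G_{y''})$ and $\delta e_0\notin\gamma''K$. The second condition holds once $R_0$ exceeds $R/2+\diam(P)$ plus a margin, because $\delta e_0\in\gamma''hK_0$ with $h\in N_{R/2}(G_{y''})$ implies $\delta\in\gamma''hP$, and so $d_S(\delta,\gamma''G_{y''})\leqslant R/2+\diam(P)$. The extra point $e_0$ in $K$ adds only $\gamma''e_0$, which lies in $\gamma''K_0$ and is covered by the same estimate. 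Then the components of $E\setminus IK_0$ containing $\delta e_0$ and $\delta'e_0$ are distinct, and removing the extra compact points only refines components, so $\gamma''K$ separates the two points as well.

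For the final "consequently" claim, observe that $N_{R_0}(\gamma''G_{y''})$ is finite. Hence only finitely many $\delta\in M$ are excluded, and this gives the conclusion for all but finitely many points of $Me_0$ and of $M'e_0$.
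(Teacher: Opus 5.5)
Your proposal is correct and is essentially the paper's argument: Lemma \ref{separation-in-cayley-graph} followed by the $R$-separation Lemma \ref{corollary:R-separation_lemma}, with $K$ the finite union of the sets $N_{R/2}(G_y)K_0$. The differences are that you enlarge the separating set inside $E$ rather than inside $\Gamma$ via Lemma \ref{lemma:enlarging_R-separating_set}, and that you take $R>\diam(P)$ strictly, which is what Lemma \ref{corollary:R-separation_lemma} actually requires (the paper uses $R=\diam(P)$). One small correction: $\gamma''K$ also contains the pieces $\gamma''N_{R/2}(G_y)K_0$ for $y\neq y''$, so your bound should read $d_S(\delta,\gamma''G_{y''})\leqslant R/2+\diam(P)+\max_{y\in O_Y}\max_{g\in G_y}d_S(1,g)$ (using $1\in G_y\cap G_{y''}$), and your ``margin'' in $R_0$ must absorb this last constant.
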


\begin{proof}
    By cocompactness and properness of the group action, we know that there exists 
    a compact subset $L\subset E$ such that $\Gamma L = E$ and $e_0\in L$. We can now take a finite symmetric set $P = \set{\gamma\in\Gamma:\gamma L\cap L \neq \emptyset}$. Since $P$ is finite by properness of action of $\Gamma$ on $E$, then it also has a finite diameter in $\Gamma$ equipped with the metric $d_S$. We define
    $$I_r =N_{r}\left(\bigcup_{y\in\abs{O}_Y}G_y\right)$$
    and put
    $K = I_{\diam(P)/2}L$. Firstly observe that the set  $I_{\diam(P)/2}$ is finite since the generating set $S$ is finite and all of the groups edge groups $G_y$ are finite. Therefore $K$ is a union of finitely many compact sets, and thus a compact set. We will show that such chosen set $K$ is as required (and a required constant $R_0$ will be specified later). Similarly to the argument for compactness of the set $K$, we note that the set $I_{3\diam(P)/2}$ is finite. By Lemma \ref{separation-in-cayley-graph} we know that the set $N_{\diam(P)/2}(\gamma''G_{y''})$ is $\diam(P)$-separating all the points $\delta\in M\setminus N_{3\diam(P)/2}(\gamma'' G_{y''})$ from all of the points $\delta'\in M'\setminus N_{3\diam(P)/2}(\gamma'' G_{y''})$. In particular, $N_{\diam(P)/2}(\gamma''G_{y''})$ is also $\diam(P)$\Hyphdash*separating all of the points 
    $\delta\in M \setminus \gamma''I_{3\diam(P)/2}$
    from all of the points 
    $\delta'\in M' \setminus \gamma''I_{3\diam(P)/2}.$
    Since $N_{\diam(P)/2}(\gamma''G_{y''}) \subseteq I_{\diam(P)/2}$ and for any $\delta, \delta'$ as above we have 
    $d_S\left(\delta, \gamma''I_{\diam(P)/2}\right) \geqslant \diam(P)$
    and 
    $d_S\left(\delta', \gamma''I_{\diam(P)/2}\right) \geqslant \diam(P),$
    then by Lemma \ref{lemma:enlarging_R-separating_set} we know that $\gamma''I_{\diam(P)/2}$ is also $\diam(P)$-separating $\delta$ from $\delta'$. By the $R$-separation lemma (Lemma \ref{corollary:R-separation_lemma}) the set $\gamma''K$ separates $\delta e_0$ from $\delta'e_0$,
    and the lemma holds for $R_0$ equal to the diameter in $(\Gamma,d_S)$ of the set $I_{3\diam(P)/2}$. 
\end{proof}

As fairly direct corollaries of the above lemma we get the following two useful observations (the justifications of which we omit). 

\medskip

\begin{fact}\label{lemma:final_separation}
Under the assumptions as in Lemma \ref{lemma:separating_set_in_EZ}, 
let $K\subseteq E$ be as in the assertions of this lemma. Then for any two vertices $\gamma G_v$, $\gamma'G_{v'}$ of the Bass-Serre tree $\wtl{X}$ and any edge $\gamma''G_{y''}$ lying on the geodesic path in $\wtl{X}$ between those vertices, the set $\gamma''K$ separates 
in $E$ all but finitely many $\delta e_0$, with $\delta\in \gamma G_v$, from all but finitely many $\delta'e_0$, with $\delta'\in \gamma' G_{v'}$. 
\end{fact}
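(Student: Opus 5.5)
This follows from Lemma \ref{lemma:separating_set_in_EZ} applied to the single edge $\gamma''G_{y''}$, once we check that $\gamma G_v\subseteq M$ and $\gamma' G_{v'}\subseteq M'$ (or vice versa). Delete the interior of $\gamma''G_{y''}$ from $\wtl{X}$ and let $\wtl{X}_0,\wtl{X}_1$ be the two resulting components. Since $\wtl{X}$ is a tree and $\gamma''G_{y''}$ lies on the unique geodesic from $\gamma G_v$ to $\gamma'G_{v'}$, these two endpoints lie in different components. If they were in the same component, that component would be a subtree containing both, and it would contain the whole geodesic between them, hence the edge $\gamma''G_{y''}$; this is impossible.

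Relabel so that $\gamma G_v\in V_{\wtl{X}_0}$ and $\gamma'G_{v'}\in V_{\wtl{X}_1}$. By the definition of $M$ and $M'$ in Lemma \ref{lemma:separating_set_in_EZ} we then have $\gamma G_v\subseteq M$ and $\gamma'G_{v'}\subseteq M'$. That lemma gives that $\gamma''K$ separates in $E$ every $\delta e_0$ with $\delta\in M\setminus N_{R_0}(\gamma''G_{y''})$ from every $\delta' e_0$ with $\delta'\in M'\setminus N_{R_0}(\gamma''G_{y''})$. The constant $R_0$ and the set $K$ do not depend on the edge.

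It remains to see that only finitely many $\delta\in\gamma G_v$, and only finitely many $\delta'\in\gamma'G_{v'}$, are excluded. The excluded elements all lie in $N_{R_0}(\gamma''G_{y''})$. This set is finite, because $G_{y''}$ is finite (edge groups are finite), so $\gamma''G_{y''}$ is finite, and balls in the word metric $d_S$ with finite $S$ are finite. Hence all but finitely many $\delta\in\gamma G_v$ satisfy the hypothesis of the lemma, and likewise for $\delta'$. This is the last assertion of Lemma \ref{lemma:separating_set_in_EZ} restricted to subsets of $M$ and $M'$.

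There is no real obstacle. The only point needing care is the tree-theoretic claim that the two endpoint vertices land in different components after the edge is removed, and the fact that the lemma's $K$ and $R_0$ are uniform, so the same $K$ works for every edge.
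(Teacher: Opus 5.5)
Your proposal is correct and is exactly the direct deduction the paper intends. The paper omits the justification, calling this a fairly direct corollary of Lemma \ref{lemma:separating_set_in_EZ}. The argument runs: the edge on the geodesic puts the two vertices into different components of $\wtl{X}\setminus\hbox{int}(\gamma''G_{y''})$, so $\gamma G_v\subseteq M$ and $\gamma'G_{v'}\subseteq M'$; the uniform $K$ and $R_0$, together with the finiteness of $N_{R_0}(\gamma''G_{y''})$, then give the ``all but finitely many'' conclusion.
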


\medskip

\begin{fact}\label{fact:sufficiently_far_vertices}
With the assumptions as above, there exist a constant $R>0$ such that if the distances 
in $\wtl{X}$ between the edge $\gamma''G_{y''}$ and the vertices $\gamma G_v$, $\gamma'G_{v'}$ are both greater than $R$, then $\gamma''K$ separates in $E$ all $\delta e_0$ as above from all $\delta'e_0$ as above.
\end{fact}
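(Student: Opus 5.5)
We derive Fact \ref{fact:sufficiently_far_vertices} from the first assertion of Lemma \ref{lemma:separating_set_in_EZ}, with the same compact set $K$ and constant $R_0$. Let $\gamma''G_{y''}$ be an edge, and let $\wtl{X}_0,\wtl{X}_1$ be the two components of $\wtl{X}$ minus its interior, with $\gamma G_v\in\wtl{X}_0$ and $\gamma'G_{v'}\in\wtl{X}_1$. Then $\gamma G_v\subseteq M$ and $\gamma'G_{v'}\subseteq M'$. By the lemma, it suffices to choose $R$ so that tree distance greater than $R$ forces every $\delta\in\gamma G_v$ (and likewise every $\delta'\in\gamma'G_{v'}$) to lie outside $N_{R_0}(\gamma''G_{y''})$ in $(\Gamma,d_S)$.

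So the key step is a coarse Lipschitz comparison between $d_S$ and the Bass-Serre tree. Assume $\delta\in\gamma G_v$ and $d_S(\delta,\gamma''G_{y''})\le R_0$. Pick $g\in\gamma''G_{y''}$ and a word $g=\delta s_1\cdots s_k$ with $k\le R_0$ and $s_i\in S$.

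Consider the trees $\delta_i\mathcal{T}$ for the prefixes $\delta_i=\delta s_1\cdots s_i$.
\begin{itemize}
\item By Fact \ref{fact:tiling-tree}, consecutive trees $\delta_{i}\mathcal{T}$ and $\delta_{i+1}\mathcal{T}=\delta_i(s_{i+1}\mathcal{T})$ intersect.
\item Each such tree has diameter at most $D:=\abs{\abs{O}_Y}$.
\item $\delta\mathcal{T}$ contains the vertex $\delta G_v=\gamma G_v$.
\item $g\mathcal{T}$ contains the edge $gG_{y''}=\gamma''G_{y''}$.
\end{itemize}
Chaining these together, the tree distance from $\gamma G_v$ to the edge $\gamma''G_{y''}$ is at most $(k+1)D\le(R_0+1)D$. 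Setting $R:=(R_0+1)D$ therefore gives the contrapositive we need. The same argument applies verbatim to $\delta'$.

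Finally, we take a path in $E$ from $\delta e_0$ to $\delta'e_0$ avoiding $\gamma''K$. This contradicts Lemma \ref{lemma:separating_set_in_EZ} directly, since both elements lie outside $N_{R_0}(\gamma''G_{y''})$.

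The only step needing care is the distance bookkeeping. We measure distance between an edge and a vertex, and Fact \ref{fact:distance_edge_and_vertex} shows that the diameter of $\gamma\mathcal{T}$ is bounded by the number of its edges. The constant just needs to absorb $\pm1$ discrepancies, so taking $R=(R_0+2)\abs{\abs{O}_Y}$ is safe. If $Y$ has no edges, the statement is vacuous.
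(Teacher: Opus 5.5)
Your proof is correct and follows the derivation the paper intends when it calls this fact a direct corollary of Lemma \ref{lemma:separating_set_in_EZ} (the paper omits the proof): the chain of translates $\delta_i\mathcal{T}$ turns $d_S$-closeness to $\gamma''G_{y''}$ into closeness in $\wtl{X}$, in the same way as in the proofs of Lemma \ref{separation-in-cayley-graph} and Fact \ref{fact:distance_edge_and_vertex}. Two cosmetic remarks: letters $s_i^{-1}$ in the word are covered by $s^{-1}\mathcal{T}\cap\mathcal{T}=s^{-1}(\mathcal{T}\cap s\mathcal{T})\neq\emptyset$, and your final paragraph about a path avoiding $\gamma''K$ is redundant once the lemma has been invoked.
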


\section{Points in \texorpdfstring{$\mathbf{Z}$}{Z}}\label{Section:classification_of_points}

The standing assumption for this section is that $\mathcal{G}$
is a graph of groups with finite edge groups, 
$\Gamma=\pi_1\mathcal{G}$ is its fundamental group,
and  $(\ol{E},Z)$ is an $E\mathcal{Z}$-structure for $\Gamma$.
We proceed to study the topological structure of the set $Z$.
In Subsection \ref{Subsection:categorisation_of_points}
we describe two kinds of points 
in $Z$, which we call {\it branch points} and {\it vertex points} (see Definitions \ref{def:infinite_branch_point} and \ref{def:vertex_point}, respectively), and we show that there are no other points in $Z$. In Subsection \ref{properties_of_points_in_Z} we study the properties of aforementioned points both as limits of points from $E$ and by stating their topological properties inside the space $Z$. In Subsection \ref{Subsection:boundary_of_virt_free_groups} we use the previous two sections in order to describe the space $Z$ in case when all of the vertex groups $G_v$ are finite (so that there are no vertex points in $Z$). In that case the studied group $\Gamma$ is finite, virtually $\ZZ$ or virtually free,
and we get that the topological shape of $Z$ is uniquely determined in each of these three cases (i.e. $Z$ is empty,
a doubleton or it is homeomorphic to the Cantor space, respectively). Even though these latter observations seem to belong to the ``mathematical folklore'', we have decided to include the analysis of those cases for the sake of completeness of this paper.

\subsection{Classification of points in \texorpdfstring{$\mathbf{Z}$}{Z}}\label{Subsection:categorisation_of_points}

Let $\wtl{X}$ be the Bass-Serre tree of $\mathcal{G}$.

\medskip
\begin{df}\label{Def:phi}
    Let $\varphi\colon O_{\wtl{X}}\to \Gamma$ be a function such that $\varphi(\gamma G_y) \in \gamma G_y$. 
\end{df}

Even though the function $\varphi$ is not uniquely determined, it is quite well-behaved, due to finiteness of the edge groups in $(\mathcal{G},Y)$. The basic properties of $\varphi$ are described in the next two facts, for which we omit the straightforward proofs.\medskip

\begin{fact}\label{fact:phi_phi'_close}
    There exists a constant $D$ such that for any two functions $\varphi, \varphi'$ as above, and for any edge $\gamma G_y$ of $\wtl{X}$, we have $d_S(\varphi(\gamma G_y), \varphi'(\gamma G_y)) \leqslant D$.
\end{fact}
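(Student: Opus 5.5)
The plan is to exploit left-invariance of the word metric $d_S$ and finiteness of the edge groups. Take any edge $\gamma G_y$ of $\wtl{X}$. Both $\varphi(\gamma G_y)$ and $\varphi'(\gamma G_y)$ lie in the coset $\gamma G_y$, so we can write $\varphi(\gamma G_y)=\gamma g$ and $\varphi'(\gamma G_y)=\gamma g'$ with $g,g'\in G_y$. By left-invariance of $d_S$,
$$d_S(\varphi(\gamma G_y),\varphi'(\gamma G_y))=d_S(\gamma g,\gamma g')=d_S(g,g')\leqslant \diam_{d_S}(G_y).$$
Here $G_y$ is regarded as a subgroup of $\Gamma$ via $G_y=G_{\hat y}\hookrightarrow G_{\omega(\hat y)}<\Gamma$, as in the definition of the Bass-Serre tree.

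Since every $G_y$ is finite, each diameter $\diam_{d_S}(G_y)$ is a finite number. There are only finitely many oriented edges $y\in O_Y$, because $Y$ is finite. Moreover, $G_y=G_{\ol{y}}$ as subgroups of $\Gamma$, so the two orientations of an edge give the same subgroup. We can therefore set
$$D\coloneqq\max_{y\in O_Y}\diam_{d_S}(G_y)<\infty.$$
This constant depends only on $(\mathcal{G},Y,T)$ and on $S$, and not on $\varphi$, $\varphi'$ or the chosen edge.

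One point needs care. An edge of $\wtl{X}$ is a coset $\gamma G_y$, and the representative $\gamma$ is not unique. However, the bound above does not depend on which representative is used, since any two choices differ by an element of $G_y$ and the whole computation takes place inside the coset. The only step that is more than bookkeeping is the invariance $d_S(\gamma g,\gamma g')=d_S(g,g')$, and this is standard for word metrics. I therefore expect no real obstacle.
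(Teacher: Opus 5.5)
Your proposal is correct and is essentially the argument the paper has in mind: the paper omits this proof as straightforward, and it uses the same constant $\max[\diam(G_y):y\in O_Y]$ in the proof of Lemma \ref{lemma:classification_in_Z}. Both values lie in the coset $\gamma G_y$, so by left-invariance of $d_S$ their distance is at most $\diam(G_y)$. This is finite because $G_y$ is finite, and taking the maximum over the finitely many $y\in O_Y$ gives a uniform $D$.
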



\medskip
\begin{fact}\label{fact:phi_finite_to_one}
    The function $\varphi$ as above is uniformly finite-to-one.
\end{fact}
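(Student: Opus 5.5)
The statement to prove is Fact \ref{fact:phi_finite_to_one}: there is a uniform bound $N$ such that $\abs{\varphi^{-1}(\delta)}\leqslant N$ for every $\delta\in\Gamma$.

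The plan is to read $\varphi^{-1}(\delta)$ directly from the Bass-Serre tree. If $\varphi(\gamma G_y)=\delta$, then by definition $\delta\in\gamma G_y$, and therefore $\gamma G_y=\delta G_y$. So the oriented edge $\gamma G_y$ is completely determined by $\delta$ together with the label $y\in O_Y$ of its $\Gamma$-orbit. Since distinct $y$ give disjoint pieces of the disjoint union $O_{\wtl{X}}=\bigsqcup_{y\in O_Y}\Gamma/G_y$, it follows that
$$\varphi^{-1}(\delta)\subseteq\set{\delta G_y : y\in O_Y}.$$
This gives the bound $\abs{\varphi^{-1}(\delta)}\leqslant\abs{O_Y}$, which is finite because $Y$ is a finite graph. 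The bound does not depend on $\delta$ or on the choice of $\varphi$, so $\varphi$ is uniformly finite-to-one.

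There is essentially no obstacle. The only point to check is that cosets with different edge labels are treated as distinct edges. This holds by construction, since the edge set of $\wtl{X}$ is defined as a disjoint union over $y\in O_Y$.

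If the intended meaning is instead finite-to-one onto unoriented edges, the same argument gives the bound $2\abs{O_Y}$. Finiteness of the edge groups is not needed for this statement itself; it is used only in the companion Fact \ref{fact:phi_phi'_close}.
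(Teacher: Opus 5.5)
Your proof is correct: $\delta\in\gamma G_y$ forces $\gamma G_y=\delta G_y$, so every fibre $\varphi^{-1}(\delta)$ lies in $\set{\delta G_y : y\in O_Y}$ and has at most $\abs{O_Y}$ elements, independently of $\delta$ and of the choice of $\varphi$. The paper omits this proof as straightforward, and your argument is the routine verification it leaves to the reader; you are also right that finiteness of the edge groups plays no role here.
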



\com{\medskip

\begin{lm}\label{lemma:phi_lies_close_in_BS_tree}
    Let $c = \gamma G_y\in O_{\wtl{X}}$ be any oriented edge. Then we have the following estimate for the distance in the Bass-Serre tree:
    $$d_{\wtl{X}}(c, \varphi(c)G_{\omega(y)}) \leqslant 2\abs{\abs{O}_Y}.$$
\end{lm}

\begin{proof}
    We need to consider two cases. If $c\in A\setminus O_T$, then we have
    $$\omega(c) = \omega(\gamma G_y) = \omega(\varphi(c)G_y)  = \varphi(c) s_y G_{\omega(y)} \in \varphi(c)s_y \mathcal{T}.$$
    On the other hand we have $\varphi(c)G_{\omega(y)} \in \varphi(c) \mathcal{T}$. Since trees $\varphi(c)s_y \mathcal{T}$ and $\varphi(c) \mathcal{T}$ have a nonempty intersection by Fact \ref{fact:tiling-tree}, then we can estimate
    $$d_{\wtl{X}}(c, \varphi(c)G_{\omega(y)}) \leqslant 2\diam(\mathcal{T}) \leqslant 2\abs{\abs{O}_\mathcal{T}} = {\abs{O}_Y}.$$
    
    In the other case, we know $\omega(c) = \gamma G_{\omega(c)}$. Since $G_y < G_{\omega(y)}$ in that case we conclude that $\omega(c) = \gamma g G_{\omega(c)} = \varphi(c)G_{\omega(y)}$, thus the edge $c$ is adjacent to $\varphi(c)G_{\omega(y)}$.
\end{proof}}

Using the function $\varphi$, to each branch in $\wtl{X}$ we can associate a sequence of points in $E$. The following lemma shows, that such sequence is always convergent to a point in $Z$.\medskip

\begin{lm}\label{lemma:reduced_word_converge_new}
    Fix a base point $e_0\in E$. For any branch $c=(c_1,c_2,\dots)$ in $\wtl{X}$, the sequence of points $\varphi(c_n)e_0$ does converge to a point $z\in Z$.

\end{lm}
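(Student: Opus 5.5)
The plan is to use Lemma \ref{lemma:separated_convergence_2} with the sequence $\gamma_n\coloneqq\varphi(c_n)$ and the points $e_m\coloneqq\varphi(c_m)e_0$. First note that the branch $c$ leaves every finite subtree, so the edges $c_n$ are pairwise distinct. Since $\varphi$ is uniformly finite-to-one (Fact \ref{fact:phi_finite_to_one}), the sequence $\varphi(c_n)$ tends to infinity in $\Gamma$. By properness of the action, $\varphi(c_n)e_0$ eventually leaves every compact subset of $E$. Hence every accumulation point of $\varphi(c_n)e_0$ in the compact space $\ol E$ lies in $Z$. It remains to show that there is only one accumulation point.

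Take two subsequences converging to $z$ and $z'$, and suppose $z\neq z'$. To get a contradiction, I would separate the tail of the branch from all earlier points using Lemma \ref{lemma:separating_set_in_EZ} applied to the edge $c_k$. Let $K$ and $R_0$ be as in that lemma. Deleting the interior of $c_k$ splits $\wtl{X}$ into two components, $\wtl{X}_0\ni\alpha(c_k)$ and $\wtl{X}_1\ni\omega(c_k)$. For $m>k$ the edge $c_m$ lies in $\wtl{X}_1$, and by Remark \ref{remark:containment_of_edge} the coset $c_m$ is contained in the union of its endpoint cosets. Since both endpoints of $c_m$ lie in $\wtl{X}_1$, we get $\varphi(c_m)\in M'$. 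Symmetrically, for $m<k$ we get $\varphi(c_m)\in M$.

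By Lemma \ref{lemma:separating_set_in_EZ}, the set $\varphi(c_k)K=\gamma''K$ then separates in $E$ the points $\varphi(c_m)e_0$, $m<k$, from the points $\varphi(c_{m'})e_0$, $m'>k$, except for those points whose group element lies within $d_S$-distance $R_0$ of the finite coset $c_k$. The main obstacle is to control these exceptions uniformly. The set $N_{R_0}(c_k)$ has uniformly bounded cardinality, since the edge groups are finite and balls in $d_S$ are finite. By Fact \ref{fact:phi_finite_to_one}, only boundedly many edges $c_m$ can have $\varphi(c_m)$ in this neighbourhood. So for each $k$ only finitely many indices are exceptional. To handle them cleanly, I would apply the lemma at an edge $c_k$ far out along the branch. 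Then the early terms $\varphi(c_m)e_0$, for $m$ in a fixed finite range, are separated from all late terms $\varphi(c_{m'})e_0$, for $m'$ beyond some index $N(k)$, because the bad indices form a bounded window around $k$.

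To conclude, I would use Corollary \ref{separation-of-two-limits} together with the fact that the separating sets $\varphi(c_k)K$ shrink to points. By condition (4) of Definition \ref{def:EZ-structures} their diameters tend to $0$, and they accumulate only at limit points of the sequence $\varphi(c_k)e_0$. Alternatively, and more cleanly, one can apply Lemma \ref{lemma:separated_convergence_2} directly. Take $\gamma_{n_k}=\varphi(c_{j_k})$ along the subsequence converging to $z$, and choose $j_k>k$ large enough that $e_k=\varphi(c_k)e_0$ lies outside the exceptional window. Then $\gamma_{n_k}K$ separates $e_k$ from almost all $e_m$, namely all $m$ beyond the window. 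The lemma gives $\lim_m\varphi(c_m)e_0=z$. Note that the lemma requires the separation to hold in $\ol E$ rather than in $E$. To pass from separation in $E$ to separation in $\ol E$, I would use Lemma \ref{form-of-component}: components of $E\setminus\gamma K$ extend to clopen components of $\ol E\setminus\gamma K$, so points separated in $E$ remain separated in $\ol E$.
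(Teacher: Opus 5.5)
Your final argument, the cleaner route via Lemma \ref{lemma:separated_convergence_2}, is correct and is essentially the paper's proof: pass to a subsequence of $\varphi(c_n)e_0$ converging to some $z\in Z$, then use translates $\varphi(c_{j_k})K$ at edges of that subsequence far along the branch, which by Lemma \ref{lemma:separating_set_in_EZ} separate $e_k$ from almost all $e_m$; the paper discards the exceptional points with the tree-distance threshold of Fact \ref{fact:sufficiently_far_vertices}, while you discard the $R_0$-exceptions by finiteness, and you rightly make explicit the passage from separation in $E$ to separation in $\ol{E}$ via Lemma \ref{form-of-component}. The one step needing a better reason is that $\varphi(c_k)\notin N_{R_0}(c_j)$ for all large $j$: your cardinality bound does not give this, but it follows because only finitely many edges of $\wtl{X}$ (as cosets) meet the finite $d_S$-ball of radius $R_0$ about $\varphi(c_k)$, while the edges of a branch are pairwise distinct.
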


\begin{proof}
    Put $e_n = \varphi(c_n)e_0$ and $c_n = \gamma_n'G_{y_n}$. Of course, since $\ol{E}$ is compact, we can choose a convergent subsequence $e_{n_k}$. By properness of the action of $\Gamma$ on $E$,
    the limit $z = \lim_{k\to\8}e_{k}$ is a point of $Z$. Now let us take a compact set $K \subseteq E$ as in the assertion of Lemma \ref{lemma:separating_set_in_EZ} and a positive integer $R$ as in the assertion of Fact \ref{fact:sufficiently_far_vertices}. We will show that a sequences $\gamma_k = \varphi(c_{n_k})$ and $e_n$ satisfy the conditions of Lemma \ref{lemma:separated_convergence_2}.

    For any given $l\in \NN$ let $k_l$ be a natural number such that  $n_{k_l} > l + R$. Of course such chosen sequence of natural numbers $n_{k_l}$ converges to infinity. It remains to show that a compact set $\gamma_{n_{k_l}}K$ separates the point $e_l$ from all but finitely many $e_n$ in $\ol{E}$.

    Since $d_{\wtl{X}}(c_l,c_{n_{k_l}}) > R$, then $d_{\wtl{X}}(\alpha(c_l),c_{n_{k_l}}) > R$ and $d_{\wtl{X}}(\omega(c_l),c_{n_{k_l}}) > R$. Moreover for any $n > n_{k_l} + R$ we also have $d_{\wtl{X}}(c_n,c_{n_{k_l}}) > R$, then in particular $d_{\wtl{X}}(\alpha(c_n),c_{n_{k_l}}) > R$ and $d_{\wtl{X}}(\omega(c_n),c_{n_{k_l}}) > R$. In particular, any point of form $\delta e_0$ where $\delta$ belongs to the union of cosets $\alpha(c_l)\cup \omega(c_l)$ is separated from any point of form $\delta' e_0$ where $\delta'$ belongs to the union of cosets $\alpha(c_n)\cup \omega(c_n)$. By remark \ref{remark:containment_of_edge} $\varphi(c_l) \in c_l \subseteq \alpha(c_l) \cup \omega(c_l)$ and $\varphi(c_n) \in c_n \subseteq \alpha(c_n) \cup \omega(c_n)$, then $\gamma_{n_{k_l}}K$ separates $e_l$ from all but finitely many $e_n$. By Lemma \ref{lemma:separated_convergence_2} the sequence $e_n$ converges.
\end{proof}

Moreover it turns out that the limit of a sequence associated to the branch does only depend on the branch itself. This observation is formulated in a lemma below.\medskip

\begin{lm}
    The limit $\lim_{n\to \8} \varphi(c_n)e_0$ as in the previous lemma does not depend on the initial choice of the function $\varphi$.
\end{lm}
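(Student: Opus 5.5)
Given two choices $\varphi,\varphi'$ as in Definition \ref{Def:phi}, we compare the sequences $e_n=\varphi(c_n)e_0$ and $e_n'=\varphi'(c_n)e_0$. By Lemma \ref{lemma:reduced_word_converge_new} both converge, say to $z$ and $z'$, and it suffices to show $z=z'$.

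The key input is Fact \ref{fact:phi_phi'_close}. For each $n$ we have $\varphi'(c_n)=\varphi(c_n)g_n$, where $g_n=\varphi(c_n)^{-1}\varphi'(c_n)$ has word length at most $D$. Hence $g_n$ ranges over the finite set $B$ of elements of $\Gamma$ with $d_S$-norm at most $D$. Put $L=\{ge_0: g\in B\}$; this is a finite, hence compact, subset of $E$ containing $e_0$. Then both $e_n$ and $e_n'$ lie in the translate $\varphi(c_n)L$.

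Since the branch $c$ is immersed in a tree, its edges are pairwise distinct. By Fact \ref{fact:phi_finite_to_one}, $\varphi$ is finite-to-one, so the elements $\varphi(c_n)$ are pairwise distinct up to finite multiplicity, and every $\gamma\in\Gamma$ equals $\varphi(c_n)$ for only finitely many $n$. By condition (4) of Definition \ref{def:EZ-structures}, the family of translates $\{\gamma L:\gamma\in\Gamma\}$ is null. Therefore $\diam \varphi(c_n)L\to 0$ with respect to a fixed metric $d$ on $\ol E$, which gives $d(e_n,e_n')\to 0$. Since $e_n\to z$, it follows that $e_n'\to z$, so $z'=z$.

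The only point requiring care is the passage from nullity of the translate family to the convergence $\diam \varphi(c_n)L\to0$ along our sequence. Nullity means that for every $\epsilon>0$ only finitely many distinct translates have diameter at least $\epsilon$. Each such translate occurs as $\varphi(c_n)L$ for only finitely many $n$, because $\varphi$ is finite-to-one and the $c_n$ are distinct. Hence $\diam\varphi(c_n)L<\epsilon$ for all but finitely many $n$, as needed.
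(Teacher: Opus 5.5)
Your proof is correct and is essentially the paper's argument: the paper also places $\varphi(c_n)e_0$ and $\varphi'(c_n)e_0$ in the translate $\varphi(c_n)B_S(1,D)e_0$ of the finite set $B_S(1,D)e_0$, and concludes from the nullity of its $\Gamma$-translates. Your extra paragraph justifying $\diam\varphi(c_n)L\to 0$ fills in a step the paper leaves implicit. If you read nullity as a statement about distinct translates, you should also note that a given translate equals $\gamma L$ for only finitely many $\gamma$, because properness makes the setwise stabilizer of $L$ finite.
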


\begin{proof}
    Let $\varphi, \varphi'$ be any functions as in Definition \ref{Def:phi}, and let $D$ be a constant from Fact \ref{fact:phi_phi'_close}. Then we can take a set $K_n = \varphi(c_n)B_S(1,D)e_0$. By this definition and Fact \ref{fact:phi_phi'_close} we have that $\varphi(c_n)e_0, \varphi'(c_n)e_0\in K_n$. The sets $K_n$ are all $\Gamma$-translates of a finite (and thus compact) set $K_0 = B_S(1,D)e_0$. Since the family of all $\Gamma$-translates of $K_0$ is null by the definition of an $E\mathcal{Z}$-structure, the limits of the sequences $\varphi(c_n)e_0$ and $\varphi'(c_n)e_0$ have to be equal.
\end{proof}

The two previous lemmas motivate our definition of a branch point.\medskip

\begin{df}\label{def:infinite_branch_point}
    We say that a point $z \in Z$ is an \textit{branch point} if it can be expressed as a limit 
    $$z = \lim_{n\to\8}\varphi(c_n)\cdot e_0$$
    for some branch $c = (c_1, c_2, \ldots)$ in $\wtl{X}$.

    Note that in the definition above we can require without loss of generality that all involved branches start at a fixed base vertex of the Bass-Serre tree. This does not affect the set of branch points since for any vertex $\gamma G_v$ and any branch $c$ in $\wtl{X}$ there exists a branch $c'$ starting at $\gamma G_v$ which is cofinal with $c$ (i.e. coincides with $c$
    except at finitely many initial edges).
\end{df}

It turns out that for any two different branches starting at the same vertex of $\wtl{X}$ their associated points in $Z$ have to be different.\medskip

\begin{lm}\label{lemma:different_branch_points}
    Suppose that $c = (c_1, c_2, \ldots), c' = (c_1', c_2', \ldots)$ are two different branches in a Bass-Serre tree $\wtl{X}$ starting at the same base vertex $\gamma_0G_{v_0}$. Then
    $$\lim_{n\to\8}\varphi(c_n)e_0\neq \lim_{n\to\8}\varphi(c_n')e_0.$$
\end{lm}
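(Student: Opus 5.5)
Since $c\neq c'$ and both start at $\gamma_0G_{v_0}$, they share some initial segment $c_1=c_1',\dots,c_{m-1}=c_{m-1}'$ and then diverge: $c_m\neq c_m'$, with $\alpha(c_m)=\alpha(c_m')$. Because $\wtl{X}$ is a tree, after the branching vertex the two branches never meet again. For large $n$, every edge $c_n$ lies in one component of $\wtl{X}$ minus the interior of $c_{m}$ (the one not containing the branching vertex). Every edge $c_n'$ with $n\geqslant m$, however, lies in the component containing the branching vertex. So the geodesic in $\wtl{X}$ from any vertex of $c_n$ ($n$ large) to any vertex of $c_{n'}'$ passes through the edge $c_m$. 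I will use the single edge $c_m=\gamma''G_{y''}$ as the separating edge.

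Take $K$ as in Lemma \ref{lemma:separating_set_in_EZ} and $R$ as in Fact \ref{fact:sufficiently_far_vertices}. For all $n,n'$ with $n>m+R+1$ and $n'>m+R+1$, the endpoints $\alpha(c_n),\omega(c_n)$ and $\alpha(c_{n'}'),\omega(c_{n'}')$ lie at distance greater than $R$ from $c_m$. This holds because the branches are immersed paths in a tree, so distances along them are tree distances; for $c'$ one also uses that going back to the branching vertex and then out along $c'$ is geodesic, which only increases the distance. Fact \ref{fact:sufficiently_far_vertices} then says that $\gamma''K$ separates in $E$ every $\delta e_0$ with $\delta\in\alpha(c_n)\cup\omega(c_n)$ from every $\delta'e_0$ with $\delta'\in\alpha(c_{n'}')\cup\omega(c_{n'}')$. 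By Remark \ref{remark:containment_of_edge}, $\varphi(c_n)\in c_n\subseteq\alpha(c_n)\cup\omega(c_n)$, and similarly for $c'$. Hence $\gamma''K$ separates every $\varphi(c_n)e_0$ from every $\varphi(c_{n'}')e_0$ for all sufficiently large $n,n'$. The compact set $\gamma''K$ is fixed and does not depend on $n,n'$.

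By Lemma \ref{lemma:reduced_word_converge_new}, both sequences converge, to points $z,z'\in Z$. Dropping finitely many initial terms, Corollary \ref{separation-of-two-limits} applies with the compact set $\gamma''K$ and gives that $\gamma''K$ separates $z$ from $z'$ in $\ol E$; in particular $z\neq z'$. The main point to check carefully is the tree-geometric claim that the geodesic between the relevant vertices passes through $c_m$ with both ends far from it. This is the hypothesis of Fact \ref{fact:sufficiently_far_vertices}, including its implicit requirement that the edge lie on the geodesic between the two vertices. The claim follows because the concatenation of the reversed initial part of $c$ beyond the branching vertex with the corresponding part of $c'$ is an immersed path, and hence a geodesic, in the tree.
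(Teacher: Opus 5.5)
Your proof is correct and follows the paper's argument. You take $c_m$, the first edge where the branches differ, separate the tails of the two branches by the translate $\gamma''K$ of the compact set from Lemma \ref{lemma:separating_set_in_EZ}, place $\varphi(c_n)$ in the endpoint cosets via Remark \ref{remark:containment_of_edge}, and conclude with Corollary \ref{separation-of-two-limits}. The only cosmetic difference is that the paper uses the ``all but finitely many'' clause of Lemma \ref{lemma:separating_set_in_EZ}, with $M,M'$ the unions of vertex cosets in the two components of $\wtl{X}\setminus\hbox{int}(c_m)$, whereas you use Fact \ref{fact:sufficiently_far_vertices} with explicit tree-distance bounds.
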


\begin{proof}
    Let $m\in \NN$ be such that $c_m \neq c_m'$. Then for any $n,n' > m$ the final vertices $\omega(c_n)$ and $\omega(c_{n'}')$ of the corresponding oriented edges
    lie in distinct connected components of the graph obtained from $\wtl{X}$
    by deleting the interior of the edge $|c_m|$. 
    Moreover, by the construction of the Bass-Serre tree $\wtl{X}$,
    we know that if $c=\gamma G_y$, $\omega(c)=\gamma' G_{v'}$ and $\alpha(c)=\gamma'' G_{v''}$,
    then $\gamma G_y\subseteq\gamma' G_{v'}\cup \gamma'' G_{v''}$ (compare Remark \ref{remark:containment_of_edge}), and hence $\varphi(c)\in\gamma' G_{v'}\cup \gamma'' G_{v''}$.
    Combining the observations from the last two sentences, and applying
    Lemma \ref{lemma:separating_set_in_EZ}, we get that there exists a compact set $K\subseteq E$ which separates all but finitely many elements from the sequence $\varphi(c_n)e_0$ from all but finitely many elements from the sequence $\varphi(c_n')e_0$. Therefore, by Corollary \ref{separation-of-two-limits}, we have
    $$\lim_{n\to \8}\varphi(c_n)e_0 \neq \lim_{n\to \8}\varphi(c_n')e_0.$$
\end{proof}

We now turn to describing the {\it vertex points} of $Z$.
\medskip

\begin{df}\label{def:vertex_point}
    Let $G_v$ be a vertex group of $\mathcal{G}$. We say that a point $z\in Z$ is a \textit{$G_v$-vertex point} if it can be expressed as a limit
    $$z = \lim_{n\to\8}\gamma_n\cdot e_0,$$
    where $\gamma_n \in \Gamma$ is a sequence of group elements from some coset $\gamma G_v$. We also abbreviate and say that $z$ is a \textit{vertex point} if it is a $G_v$-vertex point for some vertex $v\in V_Y$.
\end{df}

Our aim now is to show that $Z$ splits as the disjoint union of the sets of its branch points and its vertex points.
This "classification" of the points of $Z$ will be justified by next three observations.
\medskip

\begin{lm}\label{lemma:vertex_and_branch_points_are_disjoint}
The sets of all vertex points in $Z$ and all branch points in $Z$ are disjoint.
\end{lm}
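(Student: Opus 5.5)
We argue by contradiction. Suppose $z\in Z$ is simultaneously a $G_v$-vertex point, $z=\lim\gamma_ne_0$ with all $\gamma_n$ in a single coset $\gamma G_v$, and a branch point, $z=\lim\varphi(c_n)e_0$ for a branch $c=(c_1,c_2,\dots)$ of $\wtl{X}$. By the remark after Definition \ref{def:infinite_branch_point}, we may assume that $c$ starts at the vertex $\gamma G_v$. The aim is to find one compact set in $E$ that separates almost all points $\gamma_ne_0$ from almost all points $\varphi(c_n)e_0$. Corollary \ref{separation-of-two-limits} then forces the two limits to be distinct, which is the desired contradiction.

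To build the separating set, take $K$ and $e_0$ as in Lemma \ref{lemma:separating_set_in_EZ}, and let $R$ be the constant from Fact \ref{fact:sufficiently_far_vertices}. Choose an index $m$ larger than $R$ and consider the edge $c_m$. It lies on the geodesic from $\gamma G_v=\alpha(c_1)$ to every vertex $\alpha(c_n)$ and $\omega(c_n)$ with $n>m$. Deleting the interior of $|c_m|$ therefore puts $\gamma G_v$ in one component $\wtl{X}_0$ and all the later vertices of the branch in the other component $\wtl{X}_1$. By Remark \ref{remark:containment_of_edge}, for $n>m$ we have $\varphi(c_n)\in c_n\subseteq\alpha(c_n)\cup\omega(c_n)\subseteq M'$, where $M,M'$ are as in Lemma \ref{lemma:separating_set_in_EZ} for the edge $c_m=\gamma''G_{y''}$. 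Also $\gamma_n\in\gamma G_v\subseteq M$ for every $n$.

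Lemma \ref{lemma:separating_set_in_EZ} now says that $\gamma''K$ separates in $E$ every $\delta e_0$ with $\delta\in M\setminus N_{R_0}(\gamma''G_{y''})$ from every $\delta'e_0$ with $\delta'\in M'\setminus N_{R_0}(\gamma''G_{y''})$. The neighbourhood $N_{R_0}(\gamma''G_{y''})$ is finite, because edge groups are finite and $S$ is finite. Hence only finitely many $\gamma_n$ lie in it. Since $\varphi$ is finite-to-one (Fact \ref{fact:phi_finite_to_one}) and the edges $c_n$ are distinct, also only finitely many $\varphi(c_n)$ lie in it. We may therefore discard finitely many terms of both sequences. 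If some $\gamma_n$ repeats infinitely often, the sequence $\gamma_ne_0$ could not converge to a point of $Z$ by properness, so we may also assume the $\gamma_n$ are eventually distinct.

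After these reductions, $\gamma''K$ separates in $E$ every remaining $\gamma_ne_0$ from every remaining $\varphi(c_{n'})e_0$. Corollary \ref{separation-of-two-limits} then gives $\lim\gamma_ne_0\neq\lim\varphi(c_n)e_0$, contradicting the assumption that both equal $z$.

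The step needing most care is the bookkeeping around finiteness: that the limit of $\gamma_ne_0$ involves infinitely many distinct group elements, and that the excluded neighbourhood is finite. Both follow from properness and the finiteness of the edge groups. Alternatively, Fact \ref{lemma:final_separation} applied to the pair of vertices $\gamma G_v$ and $\omega(c_n)$ handles this more directly, provided the separating edge $c_m$ is chosen uniformly in $n$, as above.
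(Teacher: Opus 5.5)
Your proof is correct and follows essentially the same route as the paper: find an edge of the branch that separates the vertex $\gamma G_v$ from the tail of the branch, then apply Lemma \ref{lemma:separating_set_in_EZ} and Corollary \ref{separation-of-two-limits}. The differences are cosmetic. The paper picks the edge $c_n$ whose initial vertex is the point of the branch closest to $\gamma G_v$, instead of re-rooting the branch at $\gamma G_v$. Your finiteness bookkeeping makes explicit what the paper leaves implicit, and the choice $m>R$ is harmless but never used.
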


\begin{proof}
    Let $c = (c_1, c_2, \ldots)$ be an branch in $\wtl{X}$ and let $\gamma_n$ be a sequence of elements in a coset $\gamma G_v$. Suppose that 
    \begin{align*}
        \lim_{n\to\8}\varphi(c_n)e_0 = z &&\text{and} && \lim_{n\to\8} \gamma_ne_0 = z',
    \end{align*}
    we will show that $z\neq z'$. Let $n\in \NN$ be such that $\alpha(c_n)$ is the closest vertex to $\gamma G_v$ in $\wtl{X}$ and let $\wtl{X}_0$ denote the connected component of $\wtl{X}$ with the interior of $c_n$ removed containing $\omega(c_n)$. Then, since $\wtl{X}$ is a tree, for all $m> n$ vertices $\alpha(c_m)$ and $\omega(c_m)$ are contained in $\wtl{X}_0$. Note that by Remark \ref{remark:containment_of_edge} this consequently means that $\varphi(c_m)\in \bigcup V_{\wtl{X}_0}$. If $c_n = \gamma'G_y$, then by Lemma \ref{lemma:separating_set_in_EZ} we know that there exist a compact set $K\subseteq E$ such that $\gamma'K$ is separating all but finitely many points of form $\varphi(c_m)e_0$ from all but finitely many points $\gamma_ne_0$. In particular by using Corollary \ref{separation-of-two-limits} we conclude that
    $$z = \lim_{n\to\8}\varphi(c_n)e_0 \neq \lim_{n\to\8} \gamma_ne_0 = z',$$
    which ends the proof.
\end{proof}

To complete our classification of the points of $Z$ we will need the following auxiliary technical fact.\medskip

\begin{fact}\label{fact:close_enough_to_Gv}
    Let $z = \lim_{n\to \8} \gamma_n e_0$ be a point in $Z$. If there exists a positive constant $r$ such that $d_S(\gamma G_v,\gamma_n) \leqslant r$ for all $n$, then $z$ is a $G_v$-vertex point.
\end{fact}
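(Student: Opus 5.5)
For each $n$ I will replace $\gamma_n$ by a nearest element of the coset $\gamma G_v$. I will then use the nullness condition (4) of Definition \ref{def:EZ-structures} to show that the new sequence has the same limit $z$. Concretely, since $d_S(\gamma G_v,\gamma_n)\leqslant r$, I can choose $\gamma_n'\in\gamma G_v$ with $d_S(\gamma_n',\gamma_n)\leqslant r$. Then $\gamma_n=\gamma_n' b_n$ for some $b_n$ in the ball $B_S(1,r)$, which is a finite set because $S$ is finite.

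Put $K_0\coloneqq B_S(1,r)e_0$. This is a finite, hence compact, subset of $E$, and both $e_0$ and $b_ne_0$ lie in $K_0$. Consequently $\gamma_ne_0$ and $\gamma_n'e_0$ both lie in the translate $\gamma_n'K_0$.

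Next I check that $\gamma_n'$ leaves every finite subset of $\Gamma$. Since $\gamma_ne_0\to z\in Z$ and the action on $E$ is proper, every element of $\Gamma$ occurs only finitely often among the $\gamma_n$. Each $\gamma_n'$ determines $\gamma_n$ up to the finitely many choices $\gamma_n' b$ with $b\in B_S(1,r)$. Hence every element of $\Gamma$ also occurs only finitely often among the $\gamma_n'$.

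By condition (4), the diameters of the translates $\gamma_n'K_0$ therefore tend to $0$ with respect to any compatible metric $d$ on $\ol E$. This gives $d(\gamma_ne_0,\gamma_n'e_0)\to0$, and so $\lim\gamma_n'e_0=z$. Since all $\gamma_n'$ lie in the coset $\gamma G_v$, Definition \ref{def:vertex_point} shows that $z$ is a $G_v$-vertex point.

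The only step needing any care is this one: nullness of a family of translates only controls diameters for translates that escape to infinity in $\Gamma$, so I need the $\gamma_n'$ to do so. The finite-to-one observation above settles this, and the argument is the same one used to show that branch limits do not depend on the choice of $\varphi$.
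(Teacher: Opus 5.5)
Your proof is correct and follows the paper's argument. You pick $\gamma_n'\in\gamma G_v$ within distance $r$ of $\gamma_n$, note that $\gamma_ne_0$ and $\gamma_n'e_0$ both lie in a translate $\gamma_n'K'$ of a fixed compact set, and conclude by nullness of translates; the paper uses $K'=\ol{B}_{d_S}(1,r)K$ with $\Gamma K=E$, whereas your finite set $B_S(1,r)e_0$ already suffices, and your explicit check that the $\gamma_n'$ leave every finite subset of $\Gamma$ fills in a step the paper leaves implicit.
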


\begin{proof}
    Let $K \subseteq E$ be a compact set such that $\Gamma K = E$ and $e_0\in K$. 
    Let $\ol{B}_{d_S}(1, r)\subseteq\Gamma$ be the ball around the unit $1\in\Gamma$,
    with respect to the word metric $d_S$.
    Put $K' = \ol{B}_{d_S}(1, r)K$, and note that it is a compact set containing $K$
    (and thus also containing $e_0$). 
    Furthermore, for each $n$ pick $\gamma_n'\in \gamma G_v$ such that $d_S(\gamma_n, \gamma_n') \leqslant r$.
    Then for each $n$ we have $\gamma_n e_0\in\gamma_n'\ol{B}_{d_S}(1, r)e_0\subseteq\gamma_n'K'$. It follows that for any $n$ both $\gamma_ne_0\in\gamma_n'K'$ and
    $\gamma_n'e_0\in\gamma_n'K'$.
    Since the diameters of $\gamma_n'K'$ converge to 0, we get that
    $\lim\gamma_n'e_0=z$, which completes the proof.
\end{proof}


\begin{lm}\label{lemma:classification_in_Z}
    Each point $z \in Z$ is a branch point or a $G_v$-vertex point for some vertex group $G_v$.
\end{lm}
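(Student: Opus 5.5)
Fix $z\in Z$. By cocompactness of the action and compactness of $\ol E$, $z=\lim\gamma_n e_0$ for some sequence $\gamma_n\in\Gamma$ with $\gamma_n\to\infty$. Indeed, $E$ is dense in $\ol E$ because $Z$ is a Z-set. Choosing $e_n\in E$ with $e_n\to z$, we write $e_n\in\gamma_nL$ for a compact $L$ with $\Gamma L=E$ containing $e_0$. Nullness of the translates of $L$ then gives $\gamma_ne_0\to z$.

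For each $n$ pick a vertex $\gamma_nG_{v_0}$ of $\wtl{X}$, where $v_0$ is a fixed vertex of $Y$. Let $p_n$ be the geodesic in $\wtl{X}$ from the base vertex $x_0=G_{v_0}$ to $\gamma_nG_{v_0}$. We then split into two cases according to the behaviour of these geodesics.

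\emph{Case 1: the geodesics $p_n$ do not stabilize along a branch.} Passing to a subsequence, we look at the first edges of $p_n$. Local finiteness of $\wtl{X}$ fails in general, since vertex groups may be infinite, so a König-type diagonal argument is not directly available. Instead we argue as follows. Suppose there is a finite path $q=(c_1,\dots,c_k)$ from $x_0$ such that infinitely many $p_n$ begin with $q$, but for every edge $c$ extending $q$ only finitely many $p_n$ begin with $q c$. Discard the $p_n$ of length at most $k$ that pass through the terminal vertex $w=\gamma G_v$ of $q$ as endpoints; if infinitely many $p_n$ end near $w$ we are in Case 2. Otherwise infinitely many $p_n$ leave $w$ through pairwise distinct edges $\gamma G_v$-cosets edges $d_n$ adjacent to $w$. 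These edges have the form $\delta_n G_{y_n}$ with $\delta_n\in\gamma G_v$, and there are only finitely many types $y_n$, so we may fix $y$.

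Now $\varphi(d_n)$ lies in $\gamma G_v$ (up to bounded distance), and $\gamma_n$ lies in the subtree beyond $d_n$. We want to show $\gamma_ne_0$ and $\varphi(d_n)e_0$ have the same limit. The intended tool is Lemma \ref{lemma:separating_set_in_EZ}, Corollary \ref{separation-of-limit}, and nullness. The set $\varphi(d_n)K$ separates $\gamma_ne_0$ from $e_0$, provided $\gamma_n$ is far from the edge. Hence $\gamma_ne_0$ lies in a component of $\ol E\setminus\varphi(d_n)K$ not containing $e_0$. Since the edges $d_n$ are distinct, the set $\varphi(d_n)K$ escapes every compact set. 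Such components, not containing $e_0$ and cut off by sets whose diameter tends to $0$, should have diameter tending to $0$. This is the delicate point: it requires that the component of $\ol E\setminus\gamma K$ avoiding $e_0$ is small when $\gamma K$ is small and far away. I would prove it using Lemma \ref{translates-in-single-component} and the fact that $\ol E$ is locally connected and compact. A small set $\gamma K$ cannot bound a big piece away from a fixed large connected compact neighbourhood containing $e_0$: fix a connected compact $L'\ni e_0$ that is $\epsilon$-dense in $E$ in the appropriate sense, and argue by uniform local connectedness of $\ol E$. Once this is established, $\lim\gamma_ne_0=\lim\varphi(d_n)e_0$, and the $\varphi(d_n)$ lie within bounded distance of $\gamma G_v$. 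By Fact \ref{fact:close_enough_to_Gv}, $z$ is a $G_v$-vertex point.

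\emph{Case 2: a subsequence of $p_n$ converges to a branch $c$,} meaning that for each $k$ almost all $p_n$ begin with $(c_1,\dots,c_k)$. Then, for each $k$, the set $\varphi(c_k)K$ separates $\gamma_ne_0$ (for $n$ large) from $e_0$ and from $\varphi(c_j)e_0$ for $j<k-R$, by Fact \ref{fact:sufficiently_far_vertices}. Applying Lemma \ref{lemma:separated_convergence_2} to the interleaved sequence, or directly comparing limits via Corollary \ref{separation-of-two-limits}, we obtain $z=\lim\varphi(c_k)e_0$, so $z$ is a branch point.

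Remaining subcase: infinitely many $p_n$ end at a bounded distance from a fixed vertex, and more generally the endpoints $\gamma_nG_{v_0}$ range over only finitely many vertices. Then infinitely many $\gamma_n$ lie in one coset $\gamma G_{v_0}$ up to bounded distance (Fact \ref{fact:distance_edge_and_vertex}), and Fact \ref{fact:close_enough_to_Gv} again gives a vertex point. The main obstacle is the shrinking-component claim in Case 1; the rest is bookkeeping with the separation lemmas.
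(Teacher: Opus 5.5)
Your argument is correct in substance, and it has the same skeleton as the paper's proof. There is a K\H{o}nig-type dichotomy in $\wtl{X}$: either infinitely many $\gamma_n$ escape from one vertex $w=\gamma G_v$ through pairwise distinct edges, or they accumulate along a branch. Fact~\ref{fact:close_enough_to_Gv} handles the $\gamma_n$ whenever they stay within bounded word distance of one coset, and separation by translates $\gamma''K$ from Lemma~\ref{lemma:separating_set_in_EZ} handles them otherwise. Rooting at $x_0$ and using the geodesics $p_n$, instead of the paper's minimal subtree $\Upsilon$ spanned by the edges $\gamma_nG_y$, is only a cosmetic change.

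The real difference is the convergence mechanism. The paper uses Lemma~\ref{lemma:separated_convergence_2}, which requires each chosen term to be separated from the later terms of the sequence. In the branch case this forces a recursive choice of interleaved subsequences. You only separate $\gamma_ne_0$ from the fixed point $e_0$, and then use the claim that components of $\ol E\setminus\gamma K$ missing $e_0$ lie in $N_\eta(\gamma K)$ with $\eta\to0$ as $\gamma\to\infty$.

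Your sketch of this claim works. Given $\eta$, take $\delta$ from uniform local connectedness of the compact, locally connected space $\ol E$. Take a compact connected $L'\subseteq E$ containing $e_0$ and a $\delta$-net of $\ol E$; this exists by density of $E$ and Claim 2 in the proof of Lemma~\ref{translates-in-single-component}. By that lemma, for almost all $\gamma$ the set $L'$ lies in the component of $e_0$. Any point $x$ of another component is joined to some point of $L'$ by a connected set of diameter $<\eta$, and this set must meet $\gamma K$. This one-sided statement is somewhat more convenient than Lemma~\ref{lemma:separated_convergence_2} and shortens both of your cases.

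Three small points still need patching.

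\begin{enumerate}
\item In Case 1 you assume $\gamma_n$ is far from $d_n$ but never treat the alternative. If infinitely many $\gamma_n$ lie in $N_{R_0}(d_n)$, note that $d_n$ lies within word distance $1+\max_y\diam G_y$ of $\gamma G_v$. Fact~\ref{fact:close_enough_to_Gv} then finishes directly, exactly as in the paper's first subcase.
\item Your ``remaining subcase'' should simply be that infinitely many $p_n$ end exactly at $w$, so those $\gamma_n$ lie in one coset. Bounded tree distance from a vertex would not suffice, since $\wtl{X}$ is not locally finite, and Fact~\ref{fact:distance_edge_and_vertex} plays no role there. Also, in Case 1 you wrote ``Case 2'' where you meant this subcase.
\item In Case 2, Corollary~\ref{separation-of-two-limits} can only show that two limits differ. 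The equality $z=\lim\varphi(c_k)e_0$ must come from your shrinking claim, or from Lemma~\ref{lemma:separated_convergence_2} applied to subsequences chosen as in the paper.
\end{enumerate}
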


\begin{proof}
    Suppose that $z$ is any point in $Z$. Let $e_0\in K$ be any fixed base point, then $z$ can be expressed as a limit $\lim_{n\to\8} \gamma_ne_0$. Let now $K$ be a compact set and $R_0$ a constant as in the assertion of Lemma \ref{lemma:separating_set_in_EZ}.
    
    Fix any $y\in O_Y$ and define a function $\psi_y:\Gamma\to O_{\wtl{X}}$ as $\psi_y(\gamma) = \gamma G_y$. Consider the minimal subtree $\Upsilon$ of $\wtl{X}$ containing all of the edges $\psi_y(\gamma_n)$ for $n\geqslant1$. Since the sequence $\gamma_n$ has infinitely many distinct values and the function $\psi_y$ is finite-to-one, we know that $\Upsilon$ contains infinitely many edges and therefore it is infinite. By K\H{o}nig's lemma, either there is a vertex of infinite degree in $\Upsilon$, or $\Upsilon$ contains a branch. 
    
    Suppose first that $\gamma G_v$ is a vertex of $\Upsilon$ with infinite degree. Then there is a sequence $\gamma g_n G_{y_n}$ of pairwise distinct edges in $\Upsilon$ such that $\omega(\gamma g_n G_{y_n}) = \gamma G_v$. 
    Without loss of generality, by referring to compactness of $\ol{E}$, we can assume that the sequence $g_n$ is chosen in such a way that $\gamma g_ne_0$ is convergent, and by properness of the $\Gamma$-action on $E$, this sequence converges then to some point $z'\in Z$. 
    By the definition of a final vertex of an edge in a Bass-Serre tree, we know that for each $n$ we have either $g_n\in G_v$ or $g_n = g_n's_{y_n}^{-1}$ for $g_n'\in G_v$. Either way this means that 
    $$\label{eq:4.11.1}
    d_S(\gamma G_v, \gamma g_n) \leqslant 1.  \leqno{(4.11.1)}
    $$
    Therefore, by Fact \ref{fact:close_enough_to_Gv}, the point $z'$ is a $G_v$-vertex point.
    We will show that $z$ is also a $G_v$-vertex point in this case.
    
    Let $\gamma_n'$ be such a subsequence of $\gamma_n$ that $\gamma_n'G_y$ is an edge whose interior is contained in the same connected component of $\Upsilon\setminus \set{\gamma G_v}$ as the interior of $\gamma g_nG_{y_n}$.
    Consider the subcase $n_k$ which for infinitely many $k\in \NN$ we have
    $\gamma_{n_k}'\in N_{R_0}(\gamma g_{n_k} G_{y_{n_k}})$. Then, by the fact that $1\in G_{y_n}$, we get that 
    $$
    d_S(\gamma_n',\gamma g_n)\leqslant R_0+\diam(G_{y_n})\leqslant
    R_0+\max[\diam(G_y):y\in O_Y]
    $$
    for all these $n$. Consequently, by applying the above estimate (\hyperref[eq:4.11.1]{4.11.1}), for all these $n$ we have
    $$
    d_S(\gamma_n',\gamma G_v)\leqslant R_0+\max[\diam(G_y):y\in O_Y]+1.
    $$
    In view of the latter, forming an increasing sequence $n_k$ out of the above $n$,
    and applying again Fact \ref{fact:close_enough_to_Gv},
    we get that 
    $$z = \lim_{n\to \8} \gamma_ne_0 = \lim_{n\to \8} \gamma_n'e_0 =
    \lim_{k\to\8}\gamma_{n_k}'e_0 = \lim_{k\to\8}\gamma g_{n_k}e_0 = z',
    $$
    which proves that $z$ is a $G_v$-vertex point.

    Now, if we are not in the subcase considered above,
    we can assume without loss of generality that for all $n$
    we have $\gamma_n'\notin N_{R_0}(\gamma g_n G_{y_n})$.
    In this subcase we rely on the observation that the interior of the edge
    $\gamma g_nG_{y_n}$ separates in $\wtl X$ at least one vertex, say $\bar\gamma_nG_{v_n}$,
    of the edge $\gamma_n'G_y$, from all edges $\gamma_m'G_y$ where $m>n$, and in particular
    from all initial vertices $\alpha(\gamma_m'G_y)\eqqcolon\delta_m'G_{v_m'}$ and all final vertices $\omega(\gamma_m'G_y)\eqqcolon\bar{\delta}_m'G_{\bar{v}_m'}$ of these edges.
    By construction of the Bass-Serre tree, we know that $\gamma_n'\in\delta'_nG_{v_n'}\cup\bar{\delta}'_nG_{\bar{v}_n'}$
    and that $\gamma_m'\in\delta'_mG_{v_m'}\cup\bar{\delta}'_mG_{\bar{v}_m'}$ for all $m>n$ (compare Remark \ref{remark:containment_of_edge}).
    It follows then from Lemma \ref{lemma:separating_set_in_EZ} that the set $\gamma g_n K$ separates in $E$
    the point $\gamma_n'e_0$ from all but finitely many of the points $\gamma_m'e_0$ for $m>n$ (here we use the fact that 
    $\gamma_n'\notin N_{R_0}(\gamma g_nG_{y_n})$).
    This means however that the sequences $\gamma g_n$ (as substituted for $\gamma_n$ from the statement of Lemma 3.16)
    and $e_n\coloneqq\gamma_n'e_0$ satisfy the assumptions of Lemma \ref{lemma:separated_convergence_2}. Therefore
    $$z' = \lim_{n\to\8} \gamma g_n e_0 = \lim_{n\to\8} \gamma'_n e_0 = z,$$
    which again proves that $z$ is a $G_v$-vertex point.

    Suppose now, that there is a branch $c=(c_1,c_2,\dots)$ in 
    $\Upsilon$, and let $c_n =g_n G_{y_n}$. By Lemma \ref{lemma:reduced_word_converge_new}, the sequence $g_n e_0$ converges then to some point $z'\in Z$ which is a branch point. Let $K$ be as in the assertion of Lemma \ref{lemma:separating_set_in_EZ},
    and let additionally $R>0$ be as in the assertion of Fact \ref{fact:sufficiently_far_vertices}. Recalling that $\Upsilon$ is spanned by the edges 
    $\gamma_n G_y$, it is not hard to construct recursively subsequences $c_{i_n}$ of $c_n$, and $\gamma_{j_n}$ of $\gamma_n$, such that for each $n$:
    \item{(1)} the edge $c_{i_n}$ separates in $\wtl{X}$ the edge $\gamma_{j_n}G_y$ from all of the edges $\gamma_{j_m}G_y$ for $m>n$, and
    \item(2) the distances in $\wtl{X}$ from $c_{i_n}$ to any of the edges $\gamma_{j_m}G_y$ for $m\geqslant n$ are all greater than $R$.

    By the reasoning as in the last paragraph of the arguments for the previous case, and in view of Fact \ref{fact:sufficiently_far_vertices}, we get that for each $n$ the set $g_{i_n}K$ separates in $E$ the point $\gamma_{j_n}e_0$ from any of the points $\gamma_{j_m}e_0$ for $m>n$. But this means that the sequences $g_{i_n}$
    (substituted for $\gamma_n$ from the statement of Lemma 3.16)
    and $e_n\coloneqq\gamma_{j_n}e_0$ satisfy the assumptions of Lemma 3.16. Therefore, since $z'=\lim g_{i_n}e_0$, we get that $z=\lim\gamma_{j_n}e_0=z'$, and hence $z$ is a branch point.
\end{proof}

Combining Lemmas \ref{lemma:vertex_and_branch_points_are_disjoint} and \ref{lemma:classification_in_Z} we get the following.\medskip

\begin{cor}[Classification of points in $\mathbf{Z}$]\label{cor}
    The sets of branch points and of vertex points are disjoint and cover the entire $Z$.
\end{cor}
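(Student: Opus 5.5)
The statement is Corollary \ref{cor}: the sets of branch points and of vertex points are disjoint and together cover $Z$. It is an immediate combination of the two preceding lemmas, and I would argue it in two steps.

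First, disjointness. Suppose a point $z\in Z$ were both a branch point and a vertex point. Then $z=\lim\varphi(c_n)e_0$ for some branch $c$ of $\wtl{X}$. At the same time, $z=\lim\gamma_ne_0$ for some sequence $\gamma_n$ lying in a coset $\gamma G_v$. Lemma \ref{lemma:vertex_and_branch_points_are_disjoint} shows that these two limits are distinct, which is a contradiction. The independence of the branch limit from the choice of $\varphi$ (established in the lemma after Lemma \ref{lemma:reduced_word_converge_new}) ensures that the notion of branch point is well defined. Hence the choice of $\varphi$ does not matter in this comparison.

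Second, covering. Lemma \ref{lemma:classification_in_Z} states that every $z\in Z$ is either a branch point or a $G_v$-vertex point for some $v\in V_Y$. By Definition \ref{def:vertex_point}, the union over $v$ of the $G_v$-vertex points is exactly the set of vertex points. So the two sets together cover $Z$.

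Neither step is a real obstacle, since all the separation arguments have already been carried out in the two cited lemmas. The only point needing care is bookkeeping. In the definition of a vertex point the sequence $\gamma_n$ must have $\gamma_ne_0$ converging into $Z$. This forces infinitely many distinct $\gamma_n$, by properness of the action, and that is exactly the hypothesis used in Lemma \ref{lemma:vertex_and_branch_points_are_disjoint}.
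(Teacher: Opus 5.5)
Your proposal is correct and matches the paper, which obtains this corollary directly by combining Lemma \ref{lemma:vertex_and_branch_points_are_disjoint} (disjointness) with Lemma \ref{lemma:classification_in_Z} (covering). One minor refinement to your bookkeeping remark: what the ``all but finitely many points'' separation argument actually needs is that each value $\gamma_n$ occurs only finitely often (i.e.\ $\gamma_n$ eventually leaves every finite subset of $\Gamma$), not merely that there are infinitely many distinct $\gamma_n$; convergence of $\gamma_ne_0$ into $Z$ gives this stronger property by the same properness argument.
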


\subsection{Some topological properties of \texorpdfstring{$\mathbf{Z}$}{Z}}\label{properties_of_points_in_Z}

In this subsection, apart from keeping the standing assumptions for all of Section \ref{Section:classification_of_points}, we assume additionally that graphs of groups under consideration are non-elementary (as in Definition \ref{Definition:non-elementary_gog}). We aim to prove two lemmas regarding the topology of $Z$ (Lemmas \ref{lemma:branch_points_dense} and \ref{lemma:H_W-saturated} below). Both of those lemmas are crucial for our proof of Theorem A provided in the next section. For the first of these lemmas however we need the observation stated below.\medskip

\begin{lm}
    If $v\in V_Y$ is a vertex such that the group $G_v$ is infinite, then there exists an edge $y\in O_Y$ with $\alpha(y)=v$ such that for any $\gamma\in \Gamma$, removing the interior of the edge $\gamma G_y$ splits the Bass-Serre tree $\wtl{X}$ into two trees of infinite diameter.
\end{lm}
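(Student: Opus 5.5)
The plan is to derive this from Lemma \ref{lemma:behaviour_of_nonelementary_gog}(i). That lemma gives, for a vertex $v$ with infinite $G_v$, an edge $y$ adjacent to $v$. For every $\gamma$, each of the two subtrees obtained by cutting $\abs{\gamma G_y}$ contains a vertex of the form $\gamma' G_{v'}$ for every $v'\in V_Y$ and every $\gamma'\in\Gamma$. Reading this literally (every vertex type occurs in each side) is too weak, since one vertex per type would give only finite diameter. So the first step is to pin down the intended content: for each $v'$, each side contains a vertex that is a translate of $G_{v'}$, i.e. lies in the orbit of type $v'$. Since $y$ is adjacent to $v$, we may replace $y$ by $\ol{y}$ if needed and assume $\alpha(y)=v$; the splitting property concerns the unoriented edge $\abs{\gamma G_y}$, so it is unaffected.

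The main step is to upgrade ``each side contains a vertex of every type'' to ``each side has infinite diameter''. I would argue by contradiction. Suppose that for some $\gamma$ one side, say $\wtl{X}_0$, has finite diameter. Let $w$ be the endpoint of $\gamma G_y$ lying in $\wtl{X}_0$. Then $\wtl{X}_0$ is a bounded subtree hanging off the edge at $w$.

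Now $\wtl{X}_0$ contains a vertex $u$ of type $v$ (of the form $\gamma' G_v$). Its stabilizer $\gamma' G_v\gamma'^{-1}$ is infinite, and it acts on the edges at $u$ of type $y$ (those of the form $\gamma'g G_y$ with $g\in G_v$) with finite stabilizers. Hence $u$ has infinitely many incident edges $\gamma' g G_y$, $g\in G_v$. At most one of them points back towards $w$, so all the others lead deeper into $\wtl{X}_0$. Each such edge $\gamma' g G_y$ is itself a cut edge of the same kind, since it is a translate $\gamma'' G_y$. Its far side, the side not containing $u$, lies inside $\wtl{X}_0$ and by the lemma again contains a vertex of type $v$. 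That vertex is strictly farther from $w$ than $u$ along a geodesic. Iterating gives a sequence of type-$v$ vertices in $\wtl{X}_0$ at strictly increasing distance from $w$, which contradicts the finite diameter of $\wtl{X}_0$.

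The orientation needs care here. I must make sure the ``far side'' of $\gamma' g G_y$ really excludes $w$, i.e. that the edge $\gamma' g G_y$ is not on the geodesic from $u$ to $w$. Only one edge at $u$ lies on that geodesic, and the stabilizer argument gives infinitely many edges at $u$, so a valid choice always exists. Each step increases the distance from $w$ by at least one, so the distances are unbounded.

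I expect the main obstacle to be this interpretive step. The constant in Lemma \ref{lemma:behaviour_of_nonelementary_gog}(i) must be read so that the far sides always contain a fresh type-$v$ vertex, and the infinitude of the link at a type-$v$ vertex must be justified. The latter follows because the edges at $\gamma' G_v$ of type $y$ correspond to cosets $G_v/i_{\ol{y}}(G_y)$ (or the analogous cosets with the $s_y$ twist when $y\notin O_T$), which form an infinite set since $G_v$ is infinite and $G_y$ is finite. If the literal reading fails, the fallback is to rerun the non-elementarity argument of \cite{dense-amalgam} directly. Non-elementarity excludes the simply elementary configurations, in which some side of such an edge could be a bounded star, so a bounded side would force an elementary collapse to a simply elementary graph.
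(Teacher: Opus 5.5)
Your argument is correct and is essentially the paper's proof. Both use Lemma \ref{lemma:behaviour_of_nonelementary_gog}(i) together with the infinitely many $y$-type edges at a lift of $v$ (infinite $G_v$, finite $G_y$) to find, on the far side of a suitable such edge, a lift of $v$ strictly farther away. The paper takes a lift of $v$ at maximal distance from $\gamma G_y$ rather than iterating, and one small fix is needed in your version: when $u=w$ your criterion ``not on the geodesic from $u$ to $w$'' excludes nothing, so you must also exclude the edge $\abs{\gamma G_y}$ itself (its far side is $\wtl{X}_1$), which is harmless since infinitely many edges are available.
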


\begin{proof}
    By Lemma 4.1.6 (1) in \cite{dense-amalgam}, there exist an edge $y$ with $\alpha(y)=v$ any lift of which separates $\wtl{X}$ into two subtrees, each containing lifts of all vertices of $Y$. We will show that each of those subtrees has infinite diameter. For that, let us fix a lift $\gamma G_y$ of $y$. Suppose on the contrary that one of those trees, say $\wtl{X}_0$, has finite diameter, and let $\gamma'G_v$ be a lift of $v$ contained in $\wtl{X}_0$ for which $d_{\wtl{X}}(\gamma G_y, \gamma'G_v)$ is maximal. Then, since $G_v$ is infinite and $G_y$ is finite, there exist infinitely many $\gamma''\in\Gamma$ such that $\alpha(\gamma''G_y) = \gamma'G_v$. In particular we can take $\gamma''$ as above
    such that $\gamma''G_y$ is not contained in the geodesic between $\gamma'G_v$ and $\alpha(\gamma G_y)$. 
    Let $\wtl{X}''_1$ be the connected component of 
    $\wtl{X}\setminus \hbox{int}(\gamma''G_y)$ not containing
    the edge $\gamma G_y$.
    Then $\wtl{X}''_1$ is contained in $\wtl{X}_0$, and any vertex of  $\wtl{X}''_1$ lies at distance at least $d_{\wtl{X}}(\gamma G_y,\gamma'G_v) + 1$ from $\gamma G_y$. Moreover, by referring again to Lemma 4.1.6 (1) from \cite{dense-amalgam}, $\wtl{X}''_1$ contains at least one lift of $v$, and thus we have found a lift of $v$ in $\wtl{X}_0$ that lies further from $\gamma G_y$ than $\gamma'G_v$. The contradiction ends the proof.
\end{proof}

\medskip

\begin{lm}\label{lemma:branch_points_dense}
    The set of all branch points is dense in $Z$.
\end{lm}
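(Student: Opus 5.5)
By Corollary \ref{cor}, every point of $Z$ is either a branch point or a vertex point, and branch points are trivially in the closure of the set of branch points. So it suffices to show that every $G_v$-vertex point $z$ is a limit of branch points. Write $z=\lim_n\gamma_n e_0$ with $\gamma_n\in\gamma G_v$ pairwise distinct; in particular $G_v$ is infinite. Replacing the sequence by $\gamma^{-1}\gamma_n$ and using continuity of the $\Gamma$-action on $\ol E$ together with $\Gamma$-invariance of the set of branch points, we may assume $\gamma=1$, i.e. $\gamma_n\in G_v$.

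The plan is to attach to each $\gamma_n$ a branch point $z_n$ that is separated from $\gamma_n e_0$ only by a translate of the fixed compact set $K$ located near $\gamma_n$. Let $y$ be the edge with $\alpha(y)=v$ provided by the preceding lemma. Then the edge $\gamma_n G_y$ is incident to the vertex $G_v$ (since $\gamma_n\in G_v$). Removing its interior leaves a component $\wtl X^{(n)}$ not containing $G_v$, and this component has infinite diameter. By König's lemma (the tree is infinite; if some vertex has infinite degree we still get a branch, since vertex groups of infinite valence lie in infinite subtrees), there is a branch $c^{(n)}$ starting at $\omega(\gamma_n G_y)$ and contained in $\wtl X^{(n)}$. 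Let $z_n$ be its branch point.

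It remains to show $z_n\to z$. By nullity it is enough to show that $z_n$ lies in the closure of the component of $\ol E\setminus\gamma_n K$ that contains the branch side, and that this component is small. By Lemma \ref{lemma:separating_set_in_EZ} (take $\gamma''=\gamma_n$, edge $\gamma_nG_y$), $\gamma_nK$ separates almost all points $\varphi(c^{(n)}_m)e_0$ from all points $\delta e_0$ with $\delta\in G_v\setminus N_{R_0}(\gamma_nG_y)$. Hence, by Corollary \ref{separation-of-limit}, $z_n$ lies in a component $D_n$ of $\ol E\setminus\gamma_nK$ not containing a fixed $\delta_0e_0$ with $\delta_0\in G_v$, for all large $n$. (Such $\delta_0$ exists since $N_{R_0}(\gamma_nG_y)\cap G_v$ is a bounded set near $\gamma_n$, which eventually avoids a fixed element.)

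The main obstacle is to show that $\operatorname{diam} D_n\to 0$, or at least that $D_n$ accumulates only at $z$. Here I would argue via Lemma \ref{lemma:separated_convergence_2}: pass to a subsequence with $z_n\to z''$. By Lemma \ref{translates-in-single-component}, for large $n$ the compact set $L=K\cup\{e_0,\delta_0e_0\}$ (and later $\gamma_mK$ for fixed $m$) lies in a single component of $\ol E\setminus\gamma_nK$, namely the one containing $\delta_0e_0$, which is not $D_n$. So $\gamma_nK$ separates $L$ from $z_n$ and from almost all points of $D_n$. Combined with $\operatorname{diam}(\gamma_nK)\to0$ and $\gamma_nK\to z$, I expect that any limit $z''$ of $z_n$ must equal $z$. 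The cleanest route is to pick points $e_n'\in D_n\cap E$ close to $z_n$ and apply Lemma \ref{lemma:separated_convergence_2} to the sequence $\gamma_n$, using that $\gamma_nK$ separates a fixed point from $e_n'$; if needed, this is supplemented by a local connectedness argument as in the proof of that lemma: a small connected neighbourhood of $z''\neq z$ would eventually miss $\gamma_nK$ and contain both $e_0$-side points and $D_n$-points, which is a contradiction. This gives $z_n\to z$, and hence the required density.
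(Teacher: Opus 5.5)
The gap is in the decisive step, $z_n\to z$. Your reduction to vertex points is fine, and so is the conclusion, via Lemma \ref{lemma:separating_set_in_EZ} and Corollary \ref{separation-of-limit}, that $\gamma_nK$ separates $z_n$ from a fixed point $\delta_0e_0$. But you flag $z_n\to z$ as the main obstacle and then leave it at ``I expect''.

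Your ``cleanest route'' does not work as stated. Lemma \ref{lemma:separated_convergence_2} requires that $\gamma_{n_k}K$ separate $e_k$ from almost all \emph{other} terms $e_m$ of the sequence. You only know that $\gamma_nK$ separates $e_n'$ (or $z_n$) from one fixed point, and nothing in your argument separates $z_n$ from $z_m$.

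Your fallback is incomplete exactly where the $E\mathcal{Z}$-structure has to enter. You assert that a small connected neighbourhood $U$ of $z''\neq z$ eventually contains points of the component of $\ol E\setminus\gamma_nK$ that contains $\delta_0e_0$. Nothing you wrote gives this: the set $L$ you feed into Lemma \ref{translates-in-single-component} has nothing to do with $U$. The assertion is also not a formal consequence of local connectedness, since a small set near $z$ could a priori cut off a large region.

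To complete your route, pick $q\in U\cap E$, which exists because $E$ is dense in $\ol E$. Apply Lemma \ref{translates-in-single-component} to $L=\{q,\delta_0e_0\}$: for large $n$, $q$ and $\delta_0e_0$ lie in one component of $E\setminus\gamma_nK$. But $U$ is connected, misses $\gamma_nK$ for large $n$, and contains $z_n\in D_n$ along your subsequence. Hence $q\in U\subseteq D_n$, a contradiction. This yields a general principle: components of $\ol E\setminus\gamma_nK$ avoiding a fixed base point accumulate only at $\lim\gamma_n$. It needs no disjointness of half-trees.

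The paper instead obtains the hypothesis of Lemma \ref{lemma:separated_convergence_2} directly from the tree. It chooses pairwise distinct edges all ending at the common vertex $\gamma G_v$, so the half-trees $\wtl X^{(n)}$ are pairwise disjoint. Then Lemma \ref{lemma:separating_set_in_EZ} together with Corollary \ref{separation-of-two-limits} shows that the translate of $K$ at the $n$-th edge separates $z^{(n)}$ from every $z^{(n')}$ with $n'\neq n$.

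That argument really needs incidence, and your claim that $\gamma_nG_y$ is incident to $G_v$ ``since $\gamma_n\in G_v$'' need not hold. When $\ol y\in A\setminus O_T$ one has $\alpha(\gamma_nG_y)=\gamma_ns_{\ol y}G_v\neq G_v$; the paper corrects for this by multiplying $g_n$ by the stable letter. For your own route this slip is harmless, because you only use that the edge separates the branch from $G_v$ and lies near $\gamma_n$.
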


\begin{proof}
    By the classification of points in $Z$ given in Corollary \ref{cor}, it is enough show a sequence of branch points converging to any vertex point. Let $z = \lim_{n\to \infty}\gamma g_ne_0$ (where $g_n\in G_v$) be a vertex point. In that case group $G_v$ has to be infinite, let $y\in O_Y$ be an edge such that $\omega(y)=v$ and any lift of $y$ splits $\wtl{X}$ into two trees with infinite diameters (existence of such $y$ is assured by the previous lemma). Now, if $y\in A\setminus O_T$ we define $g_n' = g_ns_y$, and otherwise define $g_n' = g_n$. Note that then the final vertex of each edge of form $\gamma g_n'G_y$ is $\gamma G_v$. Since the sequence $g_n$ has infinitely many distinct values, the same holds for $g_n'$. Combining this with the fact that each coset of $G_y$ is finite, we can without loss of generality (by passing to a subsequence) assume that $\gamma g_n' G_y\neq \gamma g_{n'}'G_y$ for $n\neq n'$. For each $n$, let $\wtl{X}^{(n)}$ be the component of $\wtl{X}\setminus\hbox{int}(\gamma g_n'G_y)$ not containing the vertex $\gamma G_v$, let $c^{(n)}=(c_m^{(n)})$ be a branch in $\wtl{X}^{(n)}$ and let $z^{(n)} = \lim_{m\to \8}\varphi(c^{(n)}_m)e_0$ be the corresponding branch point. Note that, since $\gamma g_n'G_y$ and $\gamma g_{n'}'G_y$ are two edges sharing the vertex $\gamma G_v$, therefore $\wtl{X}^{(n)}$ and $\wtl{X}^{(n')}$ are disjoint. By Lemma \ref{lemma:separating_set_in_EZ} there exists a compact set $K\subseteq E$ containing $e_0$ such that $\gamma g_n' K$ separates all but finitely many points from the sequence $\varphi(c^{(n)}_m)e_0$ from all but finitely many elements from the sequence $\varphi(c^{(n')}_m)e_0$. 
    Thus, by Corollary \ref{separation-of-two-limits}, the set $\gamma g_n' K$ separates $z^{(n')}$ from $z^{(n)}$. In particular, the sequences $\gamma_n = \gamma g_n e_0$ and $z^{(n)}$ satisfy the assumptions of Lemma \ref{lemma:separated_convergence_2}. Therefore $\lim_{n\to\8}z^{(n)} = z$, which proves that the set of all branch points is dense in $Z$.
\end{proof}

Next lemma describes some vast family of clopen subsets of $Z$.
It will be used (in the next section) in our proof of Theorem $A$,
to verify condition (a5) from the definition of the dense amalgam
for appropriate family $\mathcal{W}$ of the subsets of $Z$.\medskip

\begin{lm}\label{lemma:H_W-saturated}
    Let $\gamma G_y$ be an edge in the Bass-Serre tree $\wtl{X}$, and let $\wtl{X}_0, \wtl{X}_1$ be the two connected components of ${\wtl{X}\setminus\hbox{\rm int}(\gamma G_y)}$. Moreover, let $\mathcal{W} = \bigsqcup_{v\in V_Y} \set{\Lambda\gamma G_v:\gamma\in \Gamma}$. Then, viewing the vertices of $\wtl{X}$ as subsets of $\Gamma$, we get that the set $H = \Lambda(\bigcup V_{\wtl{X}_0})$ is a $\mathcal{W}$-saturated clopen in $Z$. 
\end{lm}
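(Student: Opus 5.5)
We need to show that $H$ is clopen in $Z$ and that every $W\in\mathcal{W}$ lies either inside $H$ or in $Z\setminus H$. Put $M=\bigcup V_{\wtl{X}_0}$ and $M'=\bigcup V_{\wtl{X}_1}$, both viewed as subsets of $\Gamma$. Since every group element lies in some vertex coset, $M\cup M'=\Gamma$. Hence $\Lambda M\cup\Lambda M'=\Lambda\Gamma=Z$; here we use that $Z=\Lambda\Gamma$, by cocompactness and condition (2) of Definition \ref{def:EZ-structures}. The plan is:
\begin{itemize}
\item show that $\Lambda M\cap\Lambda M'=\emptyset$;
\item show that $\Lambda M$ is a union of limit sets of connected components of $E\setminus\gamma K$, for $K$ as in Lemma \ref{lemma:separating_set_in_EZ};
\item check saturation using the tree structure.
\end{itemize}

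\textbf{Disjointness.} Take $z\in\Lambda M$ and $z'\in\Lambda M'$, realised as $z=\lim\delta_ne_0$ and $z'=\lim\delta_n'e_0$ with $\delta_n\in M$, $\delta_n'\in M'$. Properness forces both sequences to leave every finite set, so almost all terms lie outside $N_{R_0}(\gamma G_y)$; this set is finite because edge groups are finite. By Lemma \ref{lemma:separating_set_in_EZ}, $\gamma K$ separates in $E$ all such $\delta_ne_0$ from all such $\delta_m'e_0$. Corollary \ref{separation-of-two-limits} then gives $z\neq z'$. So $Z=H\sqcup\Lambda M'$ with both pieces closed, because limit sets are closed. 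Hence $H$ is clopen.

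\textbf{Alternative route to clopenness.} One could also argue with components. Let $\mathcal{C}$ be the set of connected components of $E\setminus\gamma K$ that contain a point $\delta e_0$ with $\delta\in M\setminus N_{R_0}(\gamma G_y)$. Each such component contains no point of $M'e_0$ outside the finite exceptional set. So $H=\bigcup_{C\in\mathcal{C}}\Lambda C$, up to the finitely many points being irrelevant for limits. Each $\Lambda C$ is clopen by Lemma \ref{form-of-component}.3. However, there may be infinitely many such components, so the closed-complement argument above is the cleaner route.

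\textbf{Saturation.} Let $W=\Lambda(\gamma'G_v)$ and suppose $W\neq\emptyset$, since otherwise there is nothing to prove. The vertex $\gamma'G_v$ lies in exactly one of $\wtl{X}_0$ or $\wtl{X}_1$, say $\wtl{X}_0$. Then $\gamma'G_v\subseteq M$, so $W\subseteq\Lambda M=H$. If instead it lies in $\wtl{X}_1$, then $W\subseteq\Lambda M'=Z\setminus H$, using the disjointness shown above.

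\textbf{Expected obstacle.} The main point is the disjointness step, specifically confirming that Lemma \ref{lemma:separating_set_in_EZ} applies to arbitrary sequences in $M$ and $M'$ and not just to single cosets. The lemma is stated exactly for $M$ and $M'$ with the finite exclusion set $N_{R_0}(\gamma G_y)$. So it suffices to discard finitely many terms, which properness allows.
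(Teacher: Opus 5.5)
Your proof is correct and follows the paper's argument essentially step by step. You get $Z=\Lambda M\cup\Lambda M'$ with both sets closed, prove disjointness via Lemma \ref{lemma:separating_set_in_EZ} and Corollary \ref{separation-of-two-limits}, and get saturation by checking whether the vertex $\gamma'G_v$ lies in $\wtl{X}_0$ or $\wtl{X}_1$; your explicit discarding of the finitely many terms in $N_{R_0}(\gamma G_y)$ makes precise a step the paper leaves implicit. One small correction: the identity $Z=\Lambda\Gamma$ needs the nullness condition (4) in addition to cocompactness and the Z-set property (2), because (4) is what lets you pass from an arbitrary sequence $e_n\in\gamma_nL$ converging to $z$ to the orbit sequence $\gamma_ne_0$.
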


\begin{proof}
    Firstly let us define an auxiliary set $H' = \Lambda(\bigcup V_{\wtl{X}_1})$. We have 
    $$H\cup H' = \Lambda(\bigcup V_{\wtl{X}_0})\cup \Lambda(\bigcup V_{\wtl{X}_1}) = \Lambda(\bigcup V_{\wtl{X}}) = Z,$$
    and moreover each of the sets $H,H'$ is closed in $Z$ since they are both defined as limit sets. To show that $H$ is also open it suffices to prove that $H, H'$ are disjoint. Let $z \in H$ and $z'\in H'$ be any two points. Then we can express them as limits 
    $$z = \lim_{n\to \8} \gamma_ne_0$$
    and
    $$z' = \lim_{n\to \8} \gamma_n'e_0,$$
    where $\gamma_n\in \bigcup V_{\wtl{X}_0}$ and $\gamma_n'\in \bigcup V_{\wtl{X}_1}$. By Lemma \ref{lemma:separating_set_in_EZ} there exists a compact set $K$ that separates all but finitely many points of form $\delta e_0$ where $\delta\in \bigcup V_{\wtl{X}_0}$ from all but finitely many points of form $\delta' e_0$ where $\delta'\in \bigcup V_{\wtl{X}_1}$. In particular that set $K$ separates all but finitely many points of the sequence $\gamma_ne_0$ from all but finitely many points of the sequence $\gamma_n'e_0$, and therefore, by Corollary \ref{separation-of-two-limits}, $z\neq z'$.

    To show that $H$ is $\mathcal{W}$-saturated, we need to show that any $W = \Lambda \gamma G_v \in \mathcal{W}$ is either contained in $H$ or disjoint with $H$. For that we need to consider two cases. Firstly, suppose that $\gamma G_v \in V_{\wtl{X}_0}$. Then we have
    $$W = \Lambda \gamma G_v \subseteq \Lambda (\bigcup V_{\wtl{X}_0}) = H.$$
    On the other hand, if $\gamma G_v\not\in V_{\wtl{X}_0}$, then $\gamma G_v\in V_{\wtl{X}_1}$, and therefore
    $$W = \Lambda \gamma G_v \subseteq \Lambda (\bigcup V_{\wtl{X}_1}) = H'.$$
    By disjointness of $H$ and $H'$ we have $W \cap H = \emptyset$.
\end{proof}

\subsection{Finite vertex groups}\label{Subsection:boundary_of_virt_free_groups}

In this subsection we study the $E\mathcal{Z}$-boundary of a fundamental group of graph of groups when all of the vertex and edge groups are finite. 
We show that in this case the $E\mathcal{Z}$-boundary
can be identified with the boundary $\partial_\infty\wtl{X}$
of the corresponding Bass-Serre tree.
The corresponding concept of the boundary of a tree
is understood here in the standard sense, recalled inside the proof of the corresponding result, i.e. Lemma \ref{lemma:virtually_free_case}. 
As a consequence of this result, we deduce that any $E\mathcal{Z}$-boundary of a virtually free group is homeomorphic to the Cantor space $\mathcal{C}$ (see Corollary \ref{corollary:all_vertex_groups_finite}).
Even though this last result seems to belong to folklore, we include its proof for completeness. 

\medskip

\begin{lm}\label{lemma:virtually_free_case}
    Let $\Gamma$ be the fundamental group of a graph of groups $(\mathcal{G},Y)$ with all vertex and edge groups finite. Then any $E\mathcal{Z}$-boundary of $\Gamma$ is canonically homeomorphic to the boundary $\partial_\infty\wtl{X}$ of the corresponding Bass-Serre tree $\wtl{X}$.
\end{lm}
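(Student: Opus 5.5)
We will construct an explicit map $\Phi\colon\partial_\infty\wtl{X}\to Z$ and show it is a continuous bijection; compactness then makes it a homeomorphism. Recall that $\partial_\infty\wtl{X}$ can be identified with the set of branches starting at a fixed base vertex $x_0=\gamma_0G_{v_0}$. A basis of its topology is given by the sets $U_c(m)$, consisting of all branches sharing the first $m$ edges with a branch $c$; equivalently, the sets of branches eventually entering a component of $\wtl{X}\setminus\hbox{int}(c_m)$. By Lemma \ref{lemma:virtually_free_case}'s hypotheses and Lemma \ref{lemma:behaviour_of_nonelementary_gog}(ii) type reasoning, $\wtl{X}$ is locally finite, since vertex groups are finite and edge groups finite. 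Hence $\partial_\infty\wtl{X}$ is compact metrizable.

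We define $\Phi(c)\coloneqq\lim_n\varphi(c_n)e_0$. This limit exists by Lemma \ref{lemma:reduced_word_converge_new}, does not depend on $\varphi$ by the subsequent lemma, and does not depend on cofinal modifications of $c$, so the map is canonical. Injectivity is exactly Lemma \ref{lemma:different_branch_points}. For surjectivity, note that all $G_v$ are finite, so no point of $Z$ is a vertex point: a $G_v$-vertex point would require infinitely many distinct elements of a finite coset, since by properness $\gamma_n e_0\to z\in Z$ forces $\gamma_n$ to leave every finite set. Corollary \ref{cor} then says every point of $Z$ is a branch point, i.e. lies in the image of $\Phi$. If $\wtl{X}$ is finite, both sides are empty: $\Gamma$ is finite, and $Z=\Lambda\Gamma=\emptyset$.

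The main step is continuity, and we intend to show it at each branch $c$. Fix $c$ and a neighbourhood $U$ of $\Phi(c)$ in $Z$. Take $K$ as in Lemma \ref{lemma:separating_set_in_EZ}. For each $m$, let $H_m\coloneqq\Lambda(\bigcup V_{\wtl{X}^{(m)}})$, where $\wtl{X}^{(m)}$ is the component of $\wtl{X}\setminus\hbox{int}(c_m)$ not containing $x_0$. By Lemma \ref{lemma:H_W-saturated}, or rather its proof, which does not use non-elementarity, $H_m$ is clopen. Moreover, for every branch $c'\in U_c(m)$, the sequence $\varphi(c'_n)$ eventually lies in $\bigcup V_{\wtl{X}^{(m)}}$ by Remark \ref{remark:containment_of_edge}, so $\Phi(U_c(m))\subseteq H_m$. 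It therefore suffices to show $\diam H_m\to0$, or at least that $\bigcap_m H_m=\{\Phi(c)\}$; the latter gives $H_m\subseteq U$ for large $m$ by compactness, since the $H_m$ are decreasing closed sets. Suppose $z\in\bigcap_m H_m$. Then $z=\Phi(c')$ for some branch $c'$, and $z\in H_m$ for all $m$. If $c'\ne c$, we pick $m$ beyond the point where they diverge; then $c'$ eventually lies in the other component, so $z\in H'_m$, which is disjoint from $H_m$ by the proof of Lemma \ref{lemma:H_W-saturated}. This is a contradiction. Hence $\Phi$ is continuous, and being a continuous bijection from a compact space to the Hausdorff space $Z$, it is a homeomorphism.

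I expect the subtle point to be this continuity step, specifically checking that the disjointness $H_m\cap H'_m=\emptyset$ requires only Lemma \ref{lemma:separating_set_in_EZ}, which holds for arbitrary graphs of groups with finite edge groups.
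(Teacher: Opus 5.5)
Your proof is correct and is essentially the paper's. It uses the same map $\Phi$, injectivity from Lemma \ref{lemma:different_branch_points}, surjectivity from the absence of vertex points plus Corollary \ref{cor}, and the same key input: the disjointness of $\Lambda\bigcup V_{\wtl{X}_0}$ and $\Lambda\bigcup V_{\wtl{X}_1}$ from the proof of Lemma \ref{lemma:H_W-saturated}, which, as you rightly note, does not use non-elementarity. The only difference is that you check continuity of $\Phi$ via $\bigcap_m H_m=\{\Phi(c)\}$ and compactness of $\partial_\infty\wtl{X}$, whereas the paper checks openness by proving the equality $\Phi(U_{\gamma G_y})=\Lambda\bigcup V_{\wtl{X}_0}$ with a clopen set; your compactness of $\partial_\infty\wtl{X}$ follows directly from finiteness of the vertex groups, and Lemma \ref{lemma:behaviour_of_nonelementary_gog}(ii) should not be cited for it, since that lemma assumes non-elementarity.
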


\begin{proof}
    Let $\gamma_0 G_{v_0}$ be a fixed basepoint in a Bass-Serre tree and let $(\ol{E},Z)$ be any $E\mathcal{Z}$-structure for $\Gamma$. Since each vertex group $G_v$ is finite, then by Lemma \ref{lemma:vertex_and_branch_points_are_disjoint} there aren't any vertex points in $Z$. Therefore each point in $Z$ is represented as a limit $\lim \varphi(c_n)$, where $c = (c_1, c_2, \ldots)$ is a branch starting from $\gamma_0G_{v_0}$. On the other hand, each point in the boundary $\partial_\8\wtl{X}$ of the Bass-Serre tree $\wtl{X}$ also corresponds to a branch starting at our fixed base point $\gamma_0G_{v_0}$. We will show that the map $\Phi\colon \partial_\8\wtl{X} \to Z$ given by 
    $$c \mapsto \lim_{n\to\8} \varphi(c_n)$$
    is a homeomorphism. Since both spaces $\partial_{\8}\wtl{X}$ and $Z$ are compact it is enough to show that $\Phi$ is a bijective open map.

    Since every point in $Z$ is of the form $\lim_{n\to\8} \varphi(c_n)$ for some branch $c_n$ then we know that $\Phi$ is a surjection. Moreover, by Lemma \ref{lemma:different_branch_points}, the function $\Phi$ is also an injection. Finally to show that $\Phi$ is an open map it is enough to check that any image of a base open set from $\partial_\8\wtl{X}$ is also open in $Z$. The base open sets in $\partial_\8\wtl{X}$ are sets of form
    $$U_{\gamma G_y} = \set{c\in \partial_\8\wtl{X}: \exists n\in \NN\ c_n = \gamma G_y}.$$
    We will show that $\Phi(U_{\gamma G_y}) = \Lambda \bigcup V_{\wtl{X}_0}$, where $\wtl{X}_0$ is a connected component of $\wtl{X}\setminus \hbox{\rm int}(\gamma G_y)$ not containing $\gamma_0 G_{v_0}$. Of course all but finitely many edges of each of the branches $c\in U_{\gamma G_y}$ lie in the subgraph $\wtl{X}_0$, thus for each of those edges $c_n$ we have (by referring to Remark \ref{remark:containment_of_edge})
    $$\varphi(c_n) \in \alpha(c_n)\cup \omega(c_n) \subseteq \bigcup V_{\wtl{X}_0}.$$
    Therefore in particular 
    $$\Phi(c) = \lim_{n\to\8}\varphi(c_n) \in \Lambda \bigcup V_{\wtl{X}_0},$$
    which proves that $\Phi(U_{\gamma G_y})\subseteq \Lambda \bigcup V_{\wtl{X}_0}$. Conversely, let $z\in \Lambda \bigcup V_{\wtl{X}_0}$ be any point. Any point in $Z$ is a branch point, therefore we can define $z = \lim_{n\to\8}\varphi(c_n)$ and suppose on the contrary that $c\not\in U_{\gamma G_y}$. Then the branch $c$ do not contain edge $\gamma G_y$, therefore $c$ lies in $\wtl{X}\setminus \wtl{X}_0$. From this we can conclude that $\varphi(c_n)\in \alpha(c_n)\cup \omega(c_n)\in \wtl{X}\setminus \wtl{X}_0$ By Lemma \ref{lemma:separating_set_in_EZ} for any fixed base point $e_0\in E$ there exists a compact set $K\subseteq E$ such that $\gamma K$ separates all but finitely many elements of form $\delta e_0$ from all but finitely many elements of form $\delta' e_0$, where $\delta \in \bigcup V_{\wtl{X}_0}$ and $\delta' \in \bigcup V_{\wtl{X}\setminus\wtl{X}_0}$. In particular, if $e_n$ is any sequence of elements from the set $(\bigcup V_{\wtl{X}_0})e_0$ that converges to a point of $Z$, then all but finitely many elements of $e_n$ are separated in $E$ from all but finitely many elements $\varphi(c_n)e_0$ by the set $\gamma K$, and thus, by Corollary \ref{separation-of-two-limits}, 
    $$\lim_{n\to\8}\varphi(c_n)e_0 \not\in \Lambda (\bigcup V_{\wtl{X}_0}).$$
    This contradiction proves that $\Phi(U_{\gamma G_y})\supseteq\Lambda \bigcup V_{\wtl{X}_0}$, which consequently means that $\Phi(U_{\gamma G_y})=\Lambda \bigcup V_{\wtl{X}_0}$. By Lemma \ref{lemma:H_W-saturated}, this means that $\Phi(U_{\gamma G_y})$ is open, which proves that $\Phi$ is an open map.
\end{proof}

By referring to Lemma \ref{lemma:behaviour_of_nonelementary_gog}(ii), we can further analyse the case of non-elementary graphs of groups with finite vertex and edge groups. In that case, there is a vertex $v$ in a graph of groups such that any lift of $v$ splits $\wtl{X}$ into at least $3$ infinite components. Moreover, any  such infinite component necessarily contains another lift of a vertex $v$. In particular, in that case our Bass-Serre tree $\wtl{X}$ is easily seen to be locally finite and perfect (where by the latter we mean that any branch in $\wtl{X}$ shares arbitrarily large initial parts with other branches), therefore its boundary is a Cantor set.\medskip 

\begin{cor}\label{corollary:all_vertex_groups_finite}
    Let $(\mathcal{G},Y)$ be a non-elementary graph of groups with finite vertex and edge groups, and let $\Gamma$ be its fundamental group. Then any $E\mathcal{Z}$-boundary of $\Gamma$ is homeomorphic to the Cantor space $\mathcal{C}$.
\end{cor}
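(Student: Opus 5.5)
By Lemma \ref{lemma:virtually_free_case}, $Z$ is homeomorphic to the boundary $\partial_\infty\wtl{X}$ of the Bass-Serre tree. It therefore suffices to show that $\partial_\infty\wtl{X}$ is a Cantor set. By Brouwer's characterisation, this means checking that it is nonempty, compact, metrizable, totally disconnected and perfect.

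Compactness and metrizability come for free, since $\partial_\infty\wtl{X}\cong Z$ and $Z$ is closed in the compact metrizable space $\ol E$. Total disconnectedness also follows directly. For two distinct branches from the base vertex $\gamma_0G_{v_0}$, let $c_m$ be the first edge where they differ. Then the basic open set $U_{c_m}$ contains one branch and not the other. This set is clopen: its complement is the finite union of the sets $U_{c'}$ over the other edges $c'$ at distance $m$ from the base vertex. That union is finite by local finiteness from Lemma \ref{lemma:behaviour_of_nonelementary_gog}(ii). (Alternatively, $\Phi(U_{c_m})$ is clopen in $Z$ by Lemma \ref{lemma:H_W-saturated}.) So the main work is nonemptiness and perfectness.

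For these I use Lemma \ref{lemma:behaviour_of_nonelementary_gog}(ii): there is a vertex $v$ such that every lift $\gamma G_v$ splits $\wtl{X}$ into at least three infinite components. First I show that each such infinite component $C$ contains another lift of $v$. Since $C$ is an infinite, locally finite tree, König's lemma gives a branch in $C$. By cocompactness of the $\Gamma$-action on $\wtl{X}$ (the quotient is the finite graph $Y$), every vertex lies within distance $\abs{\abs{O}_Y}$ of a lift of $v$, as in Fact \ref{fact:distance_edge_and_vertex}. Far enough along the branch, that nearby lift of $v$ must still lie in $C$, because otherwise the path to it would pass through $\gamma G_v$. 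In particular $\wtl{X}$ contains a branch, so $\partial_\infty\wtl{X}\neq\emptyset$.

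For perfectness, take a branch $c$ and $N\in\NN$. Following $c$ past $c_N$, pick a lift $\gamma G_v$ lying beyond $\omega(c_N)$ on the far side. This can be done using the same cocompactness estimate: a lift of $v$ near $\omega(c_{N+\abs{\abs{O}_Y}+1})$ must lie in the half-tree cut off by $c_{N+1}$. At most two of the infinite components of $\wtl{X}\setminus\set{\gamma G_v}$ meet the branch $c$ (the one containing the base vertex and the one containing the tail of $c$), so at least one further infinite component $C'$ avoids $c$ entirely. A branch in $C'$, prefixed by the geodesic from the base vertex, gives a branch $c'\neq c$ that agrees with $c$ on its first $N$ edges.

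Hence every point of $\partial_\infty\wtl{X}$ is a limit of other points, and Brouwer's theorem gives $Z\cong\partial_\infty\wtl{X}\cong\mathcal{C}$. The only delicate step is arranging a lift of $v$ beyond $c_N$ with a third component avoiding $c$; the rest is routine.
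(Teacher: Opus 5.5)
Your proof is correct and follows the paper's route. Lemma~\ref{lemma:virtually_free_case} reduces the question to $\partial_\infty\wtl{X}$, and Lemma~\ref{lemma:behaviour_of_nonelementary_gog}(ii) shows $\wtl{X}$ is locally finite and perfect, so its boundary is a Cantor set; you supply the details the paper calls ``easily seen'', namely Brouwer's characterisation and the cocompactness estimate that places a lift of $v$ beyond $c_{N+1}$, so that a third infinite component yields a different branch sharing the first $N$ edges with $c$.
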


\section{Proofs of the main results (Theorems A and B)}\label{Section:main_result}

In this section we present proofs of Theorems A and B of the introduction. 
\medskip


\noindent
{\bf Proof of Theorem A}

    Firstly, note that for the case where all of the vertex groups are finite, by Corollary \ref{corollary:all_vertex_groups_finite} we know that $Z\cong \mathcal{C} \cong \damalgam(\emptyset)$. Therefore it is enough to consider the case where at least one of the vertex groups is infinite; in that case by Remark \ref{Remark:definition_of_dense_amalgam} we have
    $$\damalgam_{v\in V_Y}\Lambda G_v \cong \damalgam_{v\in V_Y^+}\Lambda G_v$$
    where $V_Y^+ \coloneqq \set{v\in V_Y: \Lambda G_v\neq \emptyset} = \set{v\in V_Y: G_v\text{ is infinite}}$. 
    
    For each vertex $v\in V_Y^+$ we define a countable family $\mathcal{W}_v = \set{\Lambda\gamma G_v: \gamma\in \Gamma}$. Now put $\mathcal{W} = \bigsqcup_{v\in V_Y^+}\mathcal{W}_v$. We will show that so described families $\mathcal{W}_v$ and $\mathcal{W}$ satisfy conditions (a1)-(a5) from the definition of the dense amalgam (Definition \ref{Definition:dense_amalgam}).
    \begin{itemize}
        \item[(a1)] First, observe that any $W\in \mathcal{W}_v$ is homeomorphic to $\Lambda G_v$. Indeed, if $W = \Lambda \gamma G_v$ then we have 
        $$\Lambda G_v \cong \gamma \Lambda G_v = \Lambda \gamma G_v,$$
        where the homeomorphism above is a consequence of the fact that $\Gamma$ acts on $Z$ by homeomorphisms, while the equality above is a consequence of the fact that the translation by $\gamma$ is continuous on the entire $\ol{E}$. It remains to show, that any two distinct $W,W'\in \mathcal{W}$ are disjoint. Let $w\in W = \Lambda\gamma G_v$ and $w'\in W' = \Lambda \gamma' G_{v'}$ and let $\gamma'' G_y$ be an edge on the path
        in the Bass-Serre tree $\wtl{X}$
        joining $\gamma G_v$ and $\gamma'G_{v'}$. By Fact \ref{lemma:final_separation}, there exists a compact set $K$ such that $\gamma''K$ separates all but finitely many points from $(\gamma G_v)e_0$ from all but finitely many points from $(\gamma'G_{v'})e_0$. Therefore, by Corollary \ref{separation-of-two-limits}, we have $w\neq w'$, which proves disjointness of $W$ and $W'$.
        
        \item[(a2)] To prove nullness of the family $\mathcal{W}$, it is obviously sufficient
to show nullness of $\mathcal{W}_v$ for each $v\in V_Y^+$.
Fix $v\in V_Y^+$ and suppose that the family
$\mathcal{W}_v=\{ \Lambda\gamma G_v:\gamma\in\Gamma \}$
is not null. Then there is $D>0$ and a sequence of pairwise distinct
cosets $\gamma_nG_v$ such that $\diam(\Lambda\gamma_nG_v)>D$
for all $n$. By passing to a subsequence, we can then find two sequences
of elements $z_n$ and $z_n'$, with $z_n,z_n'\in\Lambda\gamma_nG_v$
for all $n$, which converge to distinct points $z_0,z_0'\in Z$, respectively.
        
Now, viewing the cosets $\gamma_nG_v$ as vertices in the Bass-Serre
tree $\widetilde X$ of $(\mathcal{G},Y)$, by applying K\H{o}nig's lemma and passing to a subsequence,
we can assume that either the vertices $\gamma_nG_v$ lie in a monotonic
order on some branch in $\widetilde X$, or there is a vertex $v_0$ of
$\widetilde X$ such that all $\gamma_nG_v$ are distinct from $v_0$
and the edges adjacent to $v_0$ on the paths $[v_0,\gamma_nG_v]$
in $\widetilde X$ are pairwise distinct. In either case,
for each $n$ there is an edge $\varepsilon_n$ of $\widetilde X$
which separates $\gamma_nG_v$ from all $\gamma_kG_v$ with $k>n$,
and the edges $\varepsilon_n$ are pairwise distinct.
For each $n$, fix $\gamma_n''$ such that $\varepsilon_n=\gamma_n''G_{y_n''}$
(where $y_n''$ is the appropriate edge of $Y$).
By finiteness of $Y$, we can assume by passing to a subsequence that
all $y_n''$ coincide, and hence
the elements $\gamma_n''$ are pairwise distinct. 

{\bf Claim.}
{\it There is a compact $K\subseteq E$ such that for each $n$ the
translate $\gamma_n''K$ separates $z_n$ from all $z_k$ with $k>n$
(and at the same time separates $z_n'$ from all $z_k'$ with $k>n$). }

To prove the claim, let $e_0$ and $K$ be as in Lemma  
\ref{lemma:separating_set_in_EZ}. 
It follows then from  
Fact \ref{lemma:final_separation} that for each $n$ and any $k>n$,
the set $\gamma_n''K$
separates all but finitely many points from the set $(\gamma_nG_v)e_0$
from all but finitely many points from the set $(\gamma_kG_v)e_0$.
Since $z_n$ is the limit of a sequence of points from $(\gamma_nG_v)e_0$,
and similarly $z_k$ is the limit of a sequence of points from 
$(\gamma_kG_v)e_0$, we deduce from Corollary 
\ref{separation-of-two-limits} that $\gamma_n''K$
separates $z_n$ from $z_k$, as required. The same argument shows
also that $\gamma_n''K$
separates $z_n'$ from $z_k'$, hence the claim.

Without loss of generality, we assume now that the sequence 
$\gamma_n''$ converges to a point $z\in Z$.
In view of Lemma 
\ref{lemma:separated_convergence_2}, convergence of $\gamma_n''$ to $z$
together with the claim above imply that $\lim z_n=z$,
and similarly that $\lim z_n'=z$.
The equality $z_0=z_0'$ resulting from these observations
contradicts an earlier 
assumption that $z_0\ne z_0'$.
This concludes the verification of condition (a2).

        \item[(a3)] Let $W\in \mathcal{W}$ be any set from the family $\mathcal{W}$. Since $W$ is defined as the limit set of a coset $\gamma G_v$ for some $\gamma\in \Gamma$ and some $v\in V_Y$, therefore all of the points in $W$ are $G_v$-vertex points.  From Lemma \ref{lemma:vertex_and_branch_points_are_disjoint} we know that none of the points from $W$ is a branch point, and from Lemma \ref{lemma:branch_points_dense} we know that there is a sequence of branch points convergent to any $w\in W$. Thus $W$ is a boundary subset of $Z$.
        
        \item[(a4)] Firstly, for any branch point $z\in Z$, we will find a sequence of points from $\bigcup \mathcal{W}_v$ convergent to $z$. Let $z = \lim_{n\to\8}\varphi(c_n)e_0$ (where $\varphi$ is a function from Definition \ref{Def:phi}) and for any $n$ let $z_n$ be a point from the limit set $\Lambda \varphi(c_n)G_v$. By Fact \ref{fact:distance_edge_and_vertex}, we know that the distance between the vertex $\varphi(c_n)G_v$ and the edge $c_n$ in $\wtl{X}$ is at most $\abs{\abs{O}_Y}$. In particular, the edge $c_{n+\abs{\abs{O}_Y}+1}$ separates the vertex $\varphi(c_n)G_v$ from any vertex $\varphi(c_m)G_v$, where $m\geqslant n+2\abs{\abs{O}_Y}+2$. From Fact \ref{lemma:final_separation} we know that there exist a compact set $K\subseteq E$ such that $\varphi(c_{n+\abs{\abs{O}_Y}+1})K$ separates all but finitely many points $\delta e_0$ where $\delta\in \varphi(c_n)G_v$ from all but finitely many points $\delta' e_0$ where $\delta'\in \varphi(c_m)G_v$. Since both points $z_n$ and $z_m$ can be described as limits of points from $\varphi(c_n)G_ve_0$ and $\varphi(c_m)G_ve_0$ respectively, then by Corollary \ref{separation-of-two-limits} the compact set $\varphi(c_{n+\abs{\abs{O}_Y}+1})K$ also separates $z_n$ from $z_m$ in $\ol{E}$. 
        It follows that the sequences $\varphi(c_n)$ and $z_n$ satisfy the assumptions of Lemma \ref{lemma:separated_convergence_2}, so therefore $\lim_{n\to\8} z_n = z$. 
        
        From the above argument we know that each of the branch points can be obtained as a limit of points from $\bigcup \mathcal{W}_v$, therefore the set of all branch points is contained in the closure of $\bigcup \mathcal{W}_v$. Since the set of all branch points is dense in $Z$
        (see Lemma \ref{lemma:branch_points_dense}), we conclude that
        $$\text{cl}_Z\left(\bigcup \mathcal{W}_v\right) = \text{cl}_Z\left(\text{cl}_Z\left(\bigcup \mathcal{W}_v\right)\right) \supseteq \text{cl}_Z\left(\set{\lim_{n\to\8}\varphi(c_n)e_0: c_n\text{ is a branch in }\wtl{X}}\right) = Z,$$ 
        therefore $\bigcup \mathcal{W}_v$ is dense in $Z$.
        
        \item[(a5)] Let $z, z'\in Z$ be two points that do not belong to the same subset $W\in \mathcal{W}$, and let $K\subseteq E$ be the compact set from the assertion of Lemma \ref{lemma:separating_set_in_EZ}. We need to define a $\mathcal{W}$-saturated clopen set $H$ which separates $z$ from $z'$. By Lemma \ref{lemma:classification_in_Z}, it suffices to consider $3$ cases regarding whether those points are vertex points or branch points, and we will  indicate the appropriate set $H$ separately in each of those cases.

        Suppose first that $z$ is a $G_v$-point and $z'$ is a $G_{v'}$-point. Let $\gamma G_v$ and $\gamma'G_{v'}$ be the cosets of $G_v, G_{v'}$ such that $z\in \Lambda \gamma G_v$ and $z'\in \Lambda \gamma' G_{v'}$. By the assumption that $z,z'$ do not belong to the same set $W\in \mathcal{W}$ we know that $\gamma G_v \neq \gamma'G_{v'}$. Thus, those cosets correspond to different vertices in the Bass-Serre tree $\wtl{X}$, so let $\gamma''G_{y''}$ be an edge of $\wtl{X}$ lying on the unique geodesic path joining $\gamma G_v$ and $\gamma'G_{v'}$. By removing the interior of the edge $\gamma''G_{y''}$ from $\wtl{X}$ we get two connected subgraphs $\wtl{X}_0,\wtl{X}_1$ (we put $\wtl{X}_0$ to be the one of those subgraphs containing $\gamma G_v$). Let now $M$ be the union of all cosets $\delta G_v\in V_{\wtl{X}_0}$ (i.e. cosets representing the vertices of $\wtl{X}_0$). We put $H = \Lambda M$, and we note that $\gamma G_v \subseteq H$, and thus $z\in H$. Moreover, by Lemma \ref{lemma:separating_set_in_EZ} the set $\gamma''K$ separates all but finitely many points of form $\delta' e_0$ where $\delta' \in \gamma'G_{v'}$ from all but finitely many points of form $\delta e_0$ where $\delta\in M$. Therefore, by Corollary \ref{separation-of-two-limits}, we get that $z'\not\in \Lambda M = H$.

        Suppose now that one of the points from the pair $\set{z,z'}$ is a $G_v$-vertex point and the other is a branch point. Without loss of generality, assume that $z' = \lim_{n\to \8}\varphi(c_n)e_0$ for some branch $c = (c_1, c_2, \ldots)$, and that $z\in \Lambda \gamma G_v$. Let $n\in \NN$ be such that $\omega(c_n)$ and $\gamma G_v$ are contained in different connected components of $\wtl{X}\setminus \hbox{int}(c_n)$. Let $\wtl{X}_0, \wtl{X}_1$ be those connected components (again, we assume that $\wtl{X}_0$ is the one component containing the vertex $\gamma G_v$) and let $M=\bigcup V_{\wtl{X}_0}$. Like in the previous case we put $H = \Lambda M$. The same argument as in the previous case proves that $z\in H$; it is left to show that $z'\not\in H$. By definition we know that all of the edges $c_m$ where $m>n$ belong to the subgraph $\wtl{X}_1$. By Remark \ref{remark:containment_of_edge} we know that $\varphi(c_m) \in \alpha(c_m)\cup\omega(c_m)$, where vertices $\alpha(c_m), \omega(c_m)$ are treated as their underlying cosets in $\Gamma$. Of course sequence $\varphi(c_m)$ has infinitely many distinct values. Applying Lemma \ref{lemma:separating_set_in_EZ} to the previous observations we conclude that the set $\varphi(c_n)K$ separates all but finitely many points $\varphi(c_m)e_0$ from all but finitely many points from the set $Me_0$. In particular, by Corollary \ref{separation-of-two-limits}, we conclude that $z'\not\in H$.

        Suppose finally that $b = (b_1, b_2, \ldots)$ and $c = (c_1, c_2, \ldots)$ are some branches in $\wtl{X}$ starting at the same base vertex, and let $z = \lim_{n\to \8} \varphi(b_n)e_0$, $z' = \lim_{n\to \8} \varphi(c_n)e_0$ be the corresponding branch points. Since $z\neq z'$, we have $b\neq c$, so we can choose $n\in \NN$ such that the entire path $b$ is contained in a  different connected component of $\wtl{X}\setminus\hbox{int}(c_n)$ than $\omega(c_n)$. Let $\wtl{X}_0, \wtl{X}_1$ be the connected components of $\wtl{X}\setminus\hbox{int}(c_n)$ (with $\wtl{X}_0$ being the component containing vertex $\alpha(c_n)$), and let $M=\bigcup V_{\wtl{X}_0}$. We put $H = \Lambda M$. Note that for every $m>n$ an edge $b_m$ is contained in $\wtl{X}_0$ and that by Remark \ref{remark:containment_of_edge} $\varphi(b_m)\in \alpha(b_m)\cup \omega(b_m)\subseteq M$. Therefore $z = \lim_{m\to\8}\varphi(b_m)e_0 \in \Lambda M = H$. Moreover, the argument as in the previous case shows that $z'\not\in H$.

        By Lemma \ref{lemma:H_W-saturated}, in all of the above three cases the indicated set $H$ is a $\mathcal{W}$-saturated clopen, and this concludes the verification of condition (a5).
    \end{itemize}

This completes also the whole proof of Theorem A. \qed

We now pass to the proof of our second main result.\medskip




\noindent
{\bf Proof of Theorem B}

By Freudenthal-Hopf theorem, $\Gamma$ is 0-ended, 1-ended, 2-ended or
$\infty$-ended. If $\Gamma$ is 0-ended, it is obviously finite, and then its 
any $E\mathcal{Z}$-boundary $Z$ is empty.
If $\Gamma$ is 1-ended then $Z$ is connected by
Lemma \ref{limit_set_connected}. If $\Gamma$ is 2-ended, it is virtually infinite cyclic
(see e.g. Thorem 3.1 in \cite{Wall}), and hence it is the fundamental group
of a graph of groups of finite order (compare Theorem 3.5 in \cite{Wall}). 
The Bass-Serre tree of this 
graph of groups
is then 2-ended as well  (because $\Gamma$
acts on it geometrically, and because the number of ends is a quasi-isometry invariant of proper geodesic spaces). It follows from Lemma \ref{lemma:virtually_free_case} 
that $Z$ is then a doubleton.
Finally, if $\Gamma$ is $\infty$-ended, we consider its Dunwoody-Stallings
decomposition as the fundamental group of a graph of groups
$\mathcal{G}$
with finite edge groups and with vertex groups that are either finite
or finitely generated and 1-ended (existence of such a decomposition has been justified by the accessibility result of M. Dunwoody \cite{Dunwoody}). 

We claim that this graph of groups is then non-elementary.
To see this, suppose a contrario that some sequence of successive elementary collapses applied to $\mathcal{G}$ converts it into
a new graph of groups $(\mathcal{G}',Y')$ which has one of the three
elementary forms listed in Definition \ref{Definition:non-elementary_gog}.
Observe that the fundamental group of $\mathcal{G}'$ still coincides with $\Gamma$, and that the families of the vertex groups and the edge groups of $\mathcal{G}'$ are contained in the corresponding
families for $\mathcal{G}$, so that the edge groups of $\mathcal{G}'$
are still finite, and each vertex group of $\mathcal{G}'$ is either finite or 1-ended. Now, if the underlying graph $Y'$ of $\mathcal{G}'$ coincides with a single vertex, the corresponding vertex group has to coincide with $\Gamma$, which is infinitely-ended, contradicting its finiteness or 1-endedness. If $Y'$ has a single edge (either loop or non-loop), the conditions from Definition \ref{Definition:non-elementary_gog} easily imply that the Bass-Serre tree $\wtl{X}'$ of $\mathcal{G}'$ coincides then with an infinite line, and all vertex groups of $\mathcal{G}'$ are finite. It means that $\Gamma=\pi_1(\mathcal{G}',Y')$ acts on a line, properly and cocompactly, and this implies that $\Gamma$ has 2 ends,
contradicting its assumed $\infty$-endedness. It follows that $\mathcal{G}$ is indeed non-elemntary.

By Theorem A, $Z$ is then homeomorphic to
the dense amalgam of the limit sets in $Z$ of the vertex groups
of $\mathcal{G}$ (viewed as subgroups of $\Gamma$). If all the vertex 
groups are finite, Theorem A assures that $Z$ is a Cantor set,
and it is thus homeomorphic to the dense amalgam of a single space
being a singleton. In the remaining case, $Z$ is homeomorphic
to the dense amalgam of the limit sets of the infinite vertex groups 
of $\mathcal{G}$, which are all 1-ended. Since by 
Lemma \ref{limit_set_connected} 
all these limit sets are connected, we get that in any case $Z$
is the dense amalgam of a family of connected spaces. 

These arguments together yield parts 1-3 of Theorem B,
which completes the proof.  \qed

\medskip
\begin{rk}
Note that Theorem B still holds true, with the same proof, if we assume that
instead of being finitely presented, the group $\Gamma$ is finitely generated and accessible (where the latter means it can be raelized as the fundamental group
of a graph of groups whose all edge groups are finite, and the vertex groups are
either finite or 1-ended).
\end{rk}



\section{Final remarks and questions}\label{Section:final_conclusions}

The framework of $E\mathcal{Z}$-boundaries is very general and includes various commonly known frameworks, 
such as Gromov boundaries, CAT(0) boundaries, and systolic boundaries. 
Therefore our main results instantly translate into each of the just mentioned frameworks.

In order to fully understand the boundaries of infinitely ended groups in terms of their 1-ended factors
(for terminal splittings over finite subgroups), one needs to understand the limit sets of these factors
in the corresponding frameworks. In case of Gromov hyperbolic groups, all the factors have to be quasi-convex,
and Gromov
hyperbolic themeselves, so their limit sets (in the Gromov boundary of the whole group)
have to be homeomorphic with their own Gromov boundaries (compare Theorem 0.3 (2) in \cite{dense-amalgam}).
One could ask whether that kind of observation is also feasible in other discussed frameworks. 
We state the corresponding questions as the the question below.\medskip

\begin{oprob}\label{Open_problem_1}
    Suppose that $\Gamma = \pi_1(\mathcal{G},Y)$ is a fundamental group of a graph of group with finite edge groups, and that it admits an $E\mathcal{Z}$-structure $(\ol{E},Z)$ (respectively: CAT(0) structure, systolic structure). 
Do the vertex groups also admit $E\mathcal{Z}$-structures (CAT(0) structure, systolic structure)? 
If so, can their $E\mathcal{Z}$-structures (CAT(0) structures, systolic structures) be chosen in such a way 
that the corresponding boundary of each such vertex group $G_v$ is homeomorphic to its limit set $\Lambda G_v\subseteq Z$?
\end{oprob}

In view of Theorem 0.3 in \cite{dense-amalgam},
positive answers to both of the questions above (for a given type of boundary) would give a complete description of boundaries
of an infinitely ended group in terms of the boundaries
of its 1-ended factors.
On the other hand, negative answers would lead to further questions concerning possible forms of limits sets of groups appearing as vertex groups in the situations as above.

As a next comment, we note that one could try to generalise our Theorem A to the case where the edge groups in the discussed graphs of groups are allowed to be infinite.
In such a case, the boundary of the whole group $\pi_1\mathcal{G}$
still contains copies of the limit sets $\Lambda G_v$ of the vertex subgroups, but this time these copies overlap along
the subsets corresponding to the limit sets of the edge groups.
The whole picture thus becomes much more complicated.
Some example provided by M. Bestvina (Example 3.1 in \cite{bestvina}) shows that a graph of groups with quite
well behaved vertex groups and edge groups (where the latter are infinite) may have several significantly distinct and seemingly unrelated forms
of $E\mathcal{Z}$-boundaries.
We suspect that any reasonable generalisation of Theorem A
requires imposing some further restrictions on the structure of the considered graphs of groups.\medskip

\begin{oprob}
    How can one generalize Theorem A to any appropriate cases where the edge groups of the considered graphs of groups are not necessarily finite?
\end{oprob}

Passing to our final remark, observe that Theorem B and Corollary \ref{corollary:all_vertex_groups_finite} show some examples of groups with unique $E\mathcal{Z}$-boundaries (finite groups, virtually infinite cyclic groups and virtually free groups). Some other examples 
include the Poincare duality groups of dimensions $n\leqslant3$,
whose $E\mathcal{Z}$-boundaries are homeomorphic to $S^{n-1}$ (see Theorem 2.8 and Remark 2.9 in \cite{bestvina}). On the other hand, the already mentioned example of Bestvina (Example 3.1, \cite{bestvina}) shows that this cannot be the case in general even for some well-behaved classes of groups (such as Gromov hyperbolic groups). The uniqueness of boundaries is a  widely studied topic for both CAT(0) and some other subtypes of $E\mathcal{Z}$-boundaries such as boundaries of Helly groups (see e.g. \cite{crooke-kleiner, Ruane, Schreve-Stark, Danielski}). This motivates us to state the following.\medskip

\begin{oprob}
    Which groups have unique (up to homeomorphism) $E\mathcal{Z}$-boundaries?
\end{oprob}

Note that, if the answers to both questions in \ref{Open_problem_1} were positive, then an infinitely ended group 
would have a unique $E\mathcal{Z}$-boundary if and only if all of its one ended factors (of Dunwoody-Stallings decomposition) had a unique $E\mathcal{Z}$-boundary. 

\bibliographystyle{amsalpha}
\bibliography{mybibliography}

\vfill

\indent (M. Kandybo) \textsc{Department\hspace{0.2cm} of Mathematical Sciences, University of Copenhagen, Universitetsparken 5, 2100 Copenhagen, Denmark}\\
\textit{Email address:} \href{mailto:mk@math.ku.dk}{mk@math.ku.dk}

(J. Świątkowski) \textsc{Mathematical Institute, University of Wrocław, pl. Grunwaldzki 2, \mbox{50-384} Wrocław}\\
\textit{Email address:} \href{mailto:jacek.swiatkowski@math.uni.wroc.pl}{jacek.swiatkowski@math.uni.wroc.pl}

\end{document}